\definecolor{darkblue}{rgb}{0,0,0.4}
\newtheorem{thm}{Theorem}[section]
\newtheorem{lem}[thm]{Lemma}
\theoremstyle{remark}
\newtheorem{rem}[thm]{Remark}
\newcommand{\ie}{{\it i.e.}}
\newcommand{\eg}{{\it e.g.}}
\def\bx{{\mathbf x}}
\def\by{{\mathbf y}}
\journal{}
\begin{document}

\begin{frontmatter}

\title{An efficient iterative method for reconstructing surface from point clouds}

\author[a]{Dong Wang \corref{cor1}}
\ead{dwang@math.utah.edu}

\address[a]{Department of Mathematics, University of Utah, Salt Lake City, UT, 84112 USA.}

\cortext[cor1]{Corresponding author}

\begin{abstract}
Surface reconstruction from point clouds is a fundamental step in many applications in computer vision. In this paper, we develop an efficient iterative method on a variational model for the surface reconstruction from point clouds. The surface is implicitly represented by indicator functions and the energy functional is then approximated based on such representations using heat kernel convolutions. We then develop a novel iterative  method to minimize the approximate energy and prove the energy decaying property during each iteration. We then use asymptotic expansion to give a connection between the proposed algorithm and interface dynamics. Extensive numerical experiments are performed in both 2- and 3- dimensional Euclidean spaces to show that the proposed method is simple, efficient, and accurate.
\end{abstract}

\begin{keyword}
iterative method, thresholding, surface reconstruction, point cloud
\end{keyword} 

\end{frontmatter}

\linenumbers

\section{Introduction}
The problem of reconstructing surfaces from point clouds has attracted tremendous attention for the past decades \cite{Bolle_1991}. Point clouds are usually obtained using optical measuring devices such as laser scanners. It is a fundamental step in many applications such as computer graphics \cite{Wang_1991,Calakli_2011}, medical imaging \cite{Khan_2018},  manufacturing applications \cite{Bi_2010}, and many others \cite{Berger_2016,Bolle_1991}.

In this paper, we consider the reconstruction of $n-1$-dimensional manifold from a point cloud $\mathcal C \in \mathbb R^n$ (for example, a curve in $\mathbb R^2$ or a surface in $\mathbb R^3$).  To be specific, motivated from \cite{Zhao_2000}, we consider the following optimization problem:\begin{align}
\Gamma^* = \min_{\Gamma} E(\Gamma) \colon = \left(\int_{\Gamma} |d|^p \ ds \right)^{\frac{1}{p}}\label{pro:1}
\end{align}
where $d(\bx) = \min_{\by \in \mathcal C}{|\bx-\by|}$ is the distance from any point $\bx \in \mathbb R^n$ to the point cloud $\mathcal C$, $p$ is a positive number, $ds$ is the line/surface integral element, and $\Gamma^*$ is the surface to be reconstructed.

The goal of \eqref{pro:1} is to find an optimal surface in the sense of minimizing the $p$-norm of the distance function on the surface. In the continuous limit, when $d(x)$ is the distance function to a smooth surface $\Gamma_0$, it is easy to see that there are two global minimizers ({\it i.e.}; $\Gamma = \Gamma_0$ and $\Gamma = \emptyset$). However, in general and practical situations, $\mathcal C$ is a discrete set and may have noise or missing data. The problem is then interesting and complicated. In this case, the only one global minimizer is the trivial solution $\Gamma = \emptyset$ and the nontrivial local minimizer is more interesting. 

There have been many successful developments along the direction on the surface reconstruction from point cloud using a variational approach: mainly on different objective functionals and numerical methods; for instance, the Poisson surface reconstruction method \cite{kazhdan2006poisson}, moving least square projections  \cite{ztireli_2009}, reconstructing surfaces using anisotropic radius basis functions \cite{dinh2001reconstructing}, polygonal surface reconstruction \cite{Nan_2017},  and reconstruction using image segmentation formulations \cite{Liang_2012}. Besides these, there are many data driven approaches developed for surface reconstruction with priors for considering sampling density, level of noise, missing alignment, local surface smoothness, volumetric smoothness, absence of boundaries, symmetries, shape
primitives, or global regularity. We refer to \cite{Berger_2016} and references therein for a detailed survey on data-driven approaches.

In this paper, we use the indicator function to implicitly represent the surface and approximate the energy in \eqref{pro:1} using the indicator functions. Based on the new approximation, we derive an unconditional stable and efficient method to minimize the approximate energy to approximate the optimized solution. 

The method is motivated by the threshold dynamics method \cite{merriman1992diffusion,MBO1993,merriman1994motion} for simulating the motion by mean curvature. Recently, the method is interpreted as a minimizing movement scheme of a Lyapunov functional of indicator functions in \cite{esedoglu2015threshold}. The novel derivation and interpretation in \cite{esedoglu2015threshold} can then be directly generated to multiphase mean curvature motion with arbitrary surface tensions. The method has attracted much attention due to its simplicity and unconditional stability.  It has subsequently been extended to many problems, including the problem of area or volume preserving interface motion \cite{ruuth2003simple,jacobsauction},  wetting dynamics \cite{xu2016efficient,Wang_2019}, image processing \cite{esedog2006threshold,merkurjev2013mbo,wang2016efficient,wang2019iterative}, target-value harmonic maps \cite{osting2017generalized,wang2019interface,wang2018diffusion,Osting_2020},  high-order geometric motions \cite{Esedoglu_2008}, and so on. 

Threshold dynamics method can also be generalized and extended to modeling anisotropic interface motions via constructing a specific kernel (instead of heat kernel) (see \cite{merriman2000convolution,ruuth2001convolution,Elsey_2017} for more details). It is obvious that the gradient flow of \eqref{pro:1} is also an anisotropic interface motion where the anisotropy comes from varying $|d|^p$ in the domain. However, when the anisotropy is fixed in the computational domain, approaches from constructing kernels would fail.  

In this work, we derive a novel threshold dynamics type method for the application in reconstructing surface from point clouds. We understand this as an iterative approach for minimizing a given surface energy. Also, we perform asymptotic expansions to formally analysis the anisotropic dynamics of the interface during each iteration, to build a connection between the method and interface dynamics. 

The rest of this paper is organized as follows. In Section~\ref{sec:der}, we introduce new approximations of the energy, derive the numerical method based on the approximation, and prove the unconditional stability property of the method. We discuss some interpretations of dynamics of the interface in Section~\ref{sec:connection}. In Section~\ref{sec:accele}, we provide an accelerated version of the proposed algorithms.  In Section~\ref{sec:num}, we describe the numerical implementation and illustrate the performance of the method using extensive numerical experiments. We draw some conclusions and discussions in Section~\ref{sec:con}. 

\section{Derivation of the method}\label{sec:der}
\subsection{Approximation of the energy in \eqref{pro:1}.}
In this paper, since we focus on the codimension 1 interface ({\it e.g.}, a closed curve in  $\mathbb R^2$, a surface in $\mathbb R^3$, or higher dimensions),  we use indicator functions to implicitly represent the interface. That is, we denote  
\begin{equation} u(x) = \begin{cases} 1  & \textrm{if} \  \bx  \in \Omega_\Gamma \\
0 & \textrm{otherwise} \end{cases} 
\end{equation}
where $\Omega_\Gamma$ is the domain bounded by an interface $\Gamma$.

Under this representation, as shown in \cite{Miranda_2007}, as $\tau \searrow 0$, the boundary integral $\int_{\Gamma} |d|^p \ ds$ is approximated by a short time heat flow (\ie; Gaussian convolution):
\begin{align}
\int_{\Gamma} |d|^p \ ds \approx \sqrt{\frac{\pi}{\tau}} \int_{\mathbb R^n}  |d|^p \ u \ G_\tau * (1-u) \ d\bx,
\end{align}
or 
\begin{align}
\int_{\Gamma} |d|^p \ ds \approx \sqrt{\frac{\pi}{\tau}} \int_{\mathbb R^n}  |d|^p \ (1-u) \ G_\tau * u \ d\bx
\end{align}
where $$G_\tau(\bx) = \frac{1}{(4\pi\tau)^{n/2}} \exp\left(-\frac{|\bx|^2}{4\tau}\right),$$ $*$ denotes the convolution, and $\tau$ is a free parameter. To keep the symmetry of the formula with respect to $u$ and $1-u$, we approximate $\int_{\Gamma} |d|^p \ ds$ by $E^\tau(u)$:
\begin{align}
 E^\tau(u) \colon =\frac{1}{2}\sqrt{\frac{\pi}{\tau}} \left(\int_{\mathbb R^n}  |d|^p \ u \ G_\tau * (1-u) \ d\bx+\int_{\mathbb R^n}  |d|^p \ (1-u) \ G_\tau * u\ d\bx\right)\label{eq:approx}
\end{align}
or \begin{align}
 E^\tau(u) \colon =\sqrt{\frac{\pi}{\tau}}\int_{\mathbb R^n}  |d|^{\frac{p}{2}} u \ G_\tau * \left(|d|^{\frac{p}{2}} (1-u)\right) \ d\bx.\label{eq:approx1}
\end{align}
We note that in the special case when $|d|^p = 1$, the formula reduces to the perimeter or the surface area. It can be used to model multiphase motion with arbitrary surface tensions \cite{esedoglu2015threshold}. The convergence of \eqref{eq:approx1} as $\tau\searrow 0 $ is rigorously proved in \cite{Hu2020} when they study wetting dynamics with surfactant.

Now, we arrive at the following problem: finding $u^{\tau,\star}$ such that
\begin{align}
u^{\tau, \star} = \arg\min_{u \in \mathcal{B}} E^\tau(u)  \label{pro:2}
\end{align} where 
$$\mathcal{B} \colon = \{u\in BV(\Omega,\mathbb{R}) \ |  \ u =\{0,1\} \} $$
and $BV(\Omega,\mathbb{R})$ denotes the bounded-variation functional space. 

\subsection{Derivation of the method based on \eqref{eq:approx}.}
In this section, we iteratively solve \eqref{pro:2}. We first note that problem \eqref{pro:2} is a minimization problem of a functional on a nonconvex set $\mathcal B$. Using the relaxation approach in \cite{esedoglu2015threshold}, we relax this problem to an equivalent problem: finding $u^{\tau,\star}$ such that
\begin{align}
u^{\tau, \star} = \arg\min_{u \in \mathcal{K}} E^\tau(u)  \label{pro:3}
\end{align} where 
$$\mathcal{K} \colon = \{u\in BV(\Omega,\mathbb{R}) \ |  \ u \in [0,1] \}.$$ 
The equivalence between \eqref{pro:2} and \eqref{pro:3} is guaranteed in the following lemma.

\begin{lem}
Problem~\ref{pro:2} is equivalent to problem~\ref{pro:3}. That is, 
\[\arg\min_{u \in \mathcal B}E^\tau(u) =  \arg\min_{u \in \mathcal K}E^\tau(u).\]
\end{lem}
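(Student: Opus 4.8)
The plan is to reduce the statement to \emph{concavity} of $E^\tau$ on the convex set $\mathcal{K}$. Since $\mathcal{B}\subset\mathcal{K}$, one automatically has $\inf_{\mathcal{K}}E^\tau\le\inf_{\mathcal{B}}E^\tau$, so the real content is the reverse inequality together with the assertion that a minimizer over $\mathcal{K}$ can be chosen in $\mathcal{B}$. I would organize this into three steps: (i) $E^\tau$ is concave on $\mathcal{K}$; (ii) a concave continuous functional on a convex compact set attains its minimum at an extreme point of that set; (iii) the extreme points of $\mathcal{K}$ are exactly the members of $\mathcal{B}$.

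For step (i) the symmetric representation \eqref{eq:approx1} is the convenient one. Put $v:=|d|^{p/2}u$, which depends \emph{affinely} on $u$, and use $|d|^{p/2}(1-u)=|d|^{p/2}-v$ to rewrite
\[
E^\tau(u)=\sqrt{\tfrac{\pi}{\tau}}\left(\int_{\mathbb R^n} v\,\bigl(G_\tau*|d|^{p/2}\bigr)\,d\bx\;-\;\int_{\mathbb R^n} v\,\bigl(G_\tau*v\bigr)\,d\bx\right).
\]
The first integral is linear in $v$, hence in $u$. The bilinear form $(v,v')\mapsto\int_{\mathbb R^n} v\,(G_\tau*v')\,d\bx$ is symmetric because $G_\tau$ is even, and positive semidefinite because $\int_{\mathbb R^n} v\,(G_\tau*v)\,d\bx=\int_{\mathbb R^n}\widehat{G_\tau}(\xi)\,|\hat v(\xi)|^2\,d\xi\ge 0$ with $\widehat{G_\tau}(\xi)=e^{-\tau|\xi|^2}>0$. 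Hence $v\mapsto-\int_{\mathbb R^n} v\,(G_\tau*v)\,d\bx$ is concave; precomposing with the affine map $u\mapsto v$ preserves concavity, and adding the linear term leaves $E^\tau$ concave on $\mathcal{K}$. (For the split form \eqref{eq:approx} one uses $G_\tau*1=1$ to write $E^\tau$ as a linear term plus the quadratic term $-\sqrt{\pi/\tau}\int_{\mathbb R^n}|d|^p\,u\,(G_\tau*u)\,d\bx$; because of the non-constant weight $|d|^p$ this quadratic functional need not be convex in $u$, which is why the lemma is cleanest for \eqref{eq:approx1}.)

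For steps (ii)--(iii): $\mathcal{K}$ is convex, and $E^\tau$ — assembled from a Gaussian convolution, which smooths — is continuous on $\mathcal{K}$, so by the Bauer minimum principle (via Krein--Milman and compactness) the infimum of the concave functional $E^\tau$ over $\mathcal{K}$ is attained at an extreme point of $\mathcal{K}$; moreover, if $u_0$ is a minimizer and $u_0=\tfrac12(u_1+u_2)$ with $u_1,u_2\in\mathcal{K}$, concavity forces $E^\tau(u_1)=E^\tau(u_2)=E^\tau(u_0)$, so the minimizing set is a face of $\mathcal{K}$. Finally, the extreme points of $\mathcal{K}$ are precisely the $\{0,1\}$-valued functions, i.e.\ $\mathcal{B}$: if $0<u<1$ on a set of positive measure one can write $u=\tfrac12\bigl((u-\varepsilon\varphi)+(u+\varepsilon\varphi)\bigr)$ with a suitable small $\varphi$ keeping both endpoints in $\mathcal{K}$, whereas a $\{0,1\}$-valued function admits no nontrivial such splitting (and, via $u=\int_0^1\mathbf 1_{\{u>t\}}\,dt$, is itself a genuine average of distinct members of $\mathcal{K}$ whenever it is not $\{0,1\}$-valued). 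Combining the steps: $\min_{\mathcal{K}}E^\tau=\min_{\mathcal{B}}E^\tau$ with a minimizer in $\mathcal{B}$, and since $\mathcal{B}\subset\mathcal{K}$ while the two minimal values coincide, $\arg\min_{\mathcal{B}}E^\tau=\arg\min_{\mathcal{K}}E^\tau$.

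I expect the main obstacle to be step (i). Because the weight $|d|^p$ is non-constant and vanishes on $\mathcal{C}$, $E^\tau$ is not \emph{obviously} of the form ``linear minus positive-semidefinite quadratic,'' and the trick is the affine substitution $v=|d|^{p/2}u$ applied to the symmetric representation \eqref{eq:approx1}, which is what makes concavity transparent. A secondary, more technical point is the infinite-dimensional ``minimum at an extreme point'' statement — not every minimizer need be extreme, so one must be somewhat careful about the topology making $E^\tau$ continuous and $\mathcal{K}$ compact before invoking Bauer/Krein--Milman — but, as is standard in threshold-dynamics arguments, the smoothing by $G_\tau$ makes this routine.
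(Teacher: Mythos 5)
Your argument is correct in substance and rests on the same underlying fact as the paper's proof --- namely that $E^\tau$ is concave on the convex relaxation $\mathcal K$, so minimization is pushed out to the binary functions --- but the two proofs are packaged quite differently. The paper never establishes global concavity. Given a putative minimizer $u^*\notin\mathcal B$, it picks the single perturbation direction $\chi_A$, where $A$ is a positive-measure set on which $u^*\in(c,1-c)$, notes that $t\mapsto E^\tau(u^*+t\chi_A)$ is a quadratic polynomial on $(-c,c)$ with second derivative $-2\sqrt{\pi/\tau}\int_{\mathbb R^n}|d|^p\,\chi_A\,G_\tau*\chi_A\,d\bx<0$, and concludes that the interior point $t=0$ cannot be a minimum. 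Because the perturbation direction $\chi_A$ is \emph{nonnegative}, the required strict positivity of the quadratic term is immediate from $|d|^p\ge 0$ (vanishing only on a null set) and $G_\tau*\chi_A>0$: no Fourier computation, no Bauer/Krein--Milman, no compactness, and no characterization of extreme points are needed. Notably, this one-directional argument applies verbatim to both energy forms \eqref{eq:approx} and \eqref{eq:approx1}, whereas --- as you correctly observe --- your global-concavity route via the affine substitution $v=|d|^{p/2}u$ and the positivity of $\widehat{G_\tau}$ is only transparent for \eqref{eq:approx1}. What your approach buys is a cleaner structural picture (linear minus a positive-semidefinite Gaussian quadratic form) and a reusable template; what it costs is the functional-analytic overhead of making $\mathcal K$ compact and $E^\tau$ continuous in a suitable weak-$*$ topology.

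One genuine loose end in your write-up: the lemma asserts equality of the \emph{sets} $\arg\min_{\mathcal B}E^\tau=\arg\min_{\mathcal K}E^\tau$, which requires that \emph{every} minimizer over $\mathcal K$ be binary, not merely that some extreme minimizer exists. Your Bauer step produces one extreme minimizer, and your ``minimizing set is a face'' observation still leaves open the possibility of a non-binary minimizer lying on that face. You already have the ingredient to close this: $\int_{\mathbb R^n} v\,(G_\tau*v)\,d\bx=\int e^{-\tau|\xi|^2}|\hat v(\xi)|^2\,d\xi>0$ whenever $v\ne 0$, and $v=|d|^{p/2}(u_1-u_2)$ vanishes only if $u_1=u_2$ a.e.\ (since $d=0$ only on the null set $\mathcal C$). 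Hence $E^\tau$ is \emph{strictly} concave on $\mathcal K$ modulo null sets, so every minimizer is an extreme point. This strictness is precisely what the paper invokes when it notes that ``$|d|^p=0$ only on a set with zero measure.''
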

\begin{proof}
It is easy to see that \[\min\limits_{u \in \mathcal K}E^\tau(u)\leq \min\limits_{u \in 
\mathcal B}E^\tau(u)\] from the fact that $\mathcal B \subsetneqq \mathcal K$. 

To finish the proof, we need only to prove $$\arg\min\limits_{u\in\mathcal K} E^\tau(u) \in \mathcal B.$$ Assume it is not true and the minimizer is $u^*$, then there exists a set $A\subset \Omega$ with nonzero measure and $c>0$ such that 
\[u^*(x) \in (c,1-c), \ \ \ \ \ \forall x \in A.\]
Denote $u^t = u^*+t \chi_A$ where $\chi_A$ is the indicator function of $A$, we have $u^t \in \mathcal K$ for any $|t|<c$. Direct computation yields 
\begin{align*}
\frac{d^2 E^\tau(u^t)}{dt^2} = -2\sqrt{\frac{\pi}{\tau}}\int_{\mathbb R^n}  |d|^p \chi_A G_\tau * \chi_A \ d\bx.
\end{align*}
Because $|d|^p \geq 0$ and  $|d|^p = 0$ only on a set with zero measure, we have $\frac{d^2 E^\tau(u^t)}{dt^2}<0$, especially at $t=0$ (\ie, $u^*$).  This contradicts with the assumption that  $u^*$ is a minimizer. 

Therefore, we have that the minimizer of $\min\limits_{u\in\mathcal K} E^\tau(u)$ must be attained in $\mathcal B$.
\end{proof}

Now, we use an iterative method to solve \eqref{pro:3}. Without loss of generality, assume the $k$-th iteration is known, we find $k+1$-th iteration as follows. At the $k$-th iteration $u^k$, we compute the linearization of $E^\tau(u)$ at $u^k$:
 \begin{align}
 L^\tau(u,u^k) =  \frac{1}{2}\sqrt{\frac{\pi}{\tau}}\int_{\mathbb R^n} u \varphi^k \ d \bx 
 \end{align}
 where \[ \varphi^k =  |d|^p G_\tau * (1-2u^k) + G_\tau * \left(|d|^p (1-2u^k)\right) .\]
 
Based on the sequential linear programming, we then compute the $k+1$-th iteration $u^{k+1}$ by solving the following linearized problem:
 \begin{align}
u^{k+1} = \arg\min_{u \in \mathcal{K}} L^\tau(u,u^k).  \label{pro:4}
\end{align}
Since $u$ only takes value in $[0,1]$ (\ie, a bounded set), the minimization problem \eqref{pro:4} can be solved point-wisely. That is, at each point $\bx$,
\[u^{k+1}(\bx) = \arg\min_{u(\bx) \in [0,1]} u(\bx)\varphi^k(\bx) .\]
This is exactly solved via the following thresholding step:
\[u^{k+1}(\bx) = \begin{cases} 1, & \textrm{if} \  \varphi^k(\bx) < 0, \\
0, & \textrm{otherwise}. \end{cases}
\]

The algorithm is summarized into Algorithm~\ref{a:MBO}.

\begin{algorithm}[ht!]
\DontPrintSemicolon
 \KwIn{$\Omega$: computational domain; $d$: distance function to the point cloud; $\tau > 0$; and $u^0 \in \mathcal{B}$.}
 \KwOut{$u^\star \in \mathcal{B}$.}
 \While{not converged}{
{\bf 1.} For the fixed $u^k$, compute 
\[ \varphi^k(\bx) = |d|^p G_\tau * (1-2u^k) + G_\tau * \left(|d|^p (1-2u^k)\right). \]
{\bf 2.} Set
\[u^{k+1}(\bx) = \begin{cases}1 \ \  \textrm{if}  \ \phi^k(\bx)\leq 0 ,   \\
0 \ \  \textrm{otherwise}.  \end{cases}\]
 }
\caption{The iterative method for approximating minimizers of \eqref{eq:approx}. }
\label{a:MBO}
\end{algorithm}

\begin{rem}
The criteria for ``{\it convergence}'' in all proposed algorithms is that $u^{k+1} (\bar \bx) = u^k (\bar \bx)$ on each grid point $\bar \bx$ in the discretized domain. In other words, the value of $u$ at no grid point is changing (from $1$ to $0$ or $0$ to $1$).
\end{rem}

\subsection{Derivation of the method based on \eqref{eq:approx1}.}\label{sec:algorithm2}
Similar to the derivation in Algorithm~\ref{a:MBO}, we use the same relaxation and linearization approach to derive another unconditional stable method in Algorithm~\ref{a:MBO2}. The details are omitted here.

\begin{algorithm}[ht!]
\DontPrintSemicolon
 \KwIn{$\Omega$: computational domain; $d$: distance function to the point cloud; $\tau > 0$; and $u^0 \in \mathcal{B}$.}
 \KwOut{$u^\star \in \mathcal{B}$.}
 \While{not converged}{
{\bf 1.} For the fixed $u^k$, compute 
\[ \varphi^k(\bx) = G_\tau * \left(|d|^{\frac{p}{2}} (1-2u^k)\right). \]
{\bf 2.} Set
\[u^{k+1}(\bx) = \begin{cases}1 \ \  \textrm{if}  \ \phi^k(\bx)\leq 0 ,   \\
0 \ \  \textrm{otherwise}.  \end{cases}\]
 }
\caption{The iterative method for approximating minimizers of \eqref{eq:approx1}. }
\label{a:MBO2}
\end{algorithm}

\begin{rem}
We remark here that, at each iteration, the computational complexity of Algorithm~\ref{a:MBO2} is about the half of the computational complexity of Algorithm~\ref{a:MBO} because only one convolution needs to be computed. 
\end{rem}

As for Algorithm~\ref{a:MBO2}, we discuss the stability of the method in the sense of the monotonicity of the approximate energy \eqref{eq:approx1} (\ie; $E^\tau(u^{k+1}) \leq E^\tau(u^k)$).  

\begin{thm}
Suppose $u^k$ ($k = 1, 2, \cdots$) are computed from Algorithm~\ref{a:MBO2}, we have 
\[ E^\tau(u^{k+1}) \leq E^\tau(u^k)\]
with $E^\tau(u)$ being defined in \eqref{eq:approx1}.
\end{thm}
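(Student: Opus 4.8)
The plan is to exploit the concavity of $E^\tau$ from \eqref{eq:approx1} as a functional of $u$ on the convex set $\mathcal K$: the thresholding in Algorithm~\ref{a:MBO2} is precisely the minimization over $\mathcal K$ of the linearization of $E^\tau$ at $u^k$, and for a concave functional such a step cannot increase the energy. To make this quantitative I would set $w := |d|^{p/2} \ge 0$ and, using $G_\tau*(w(1-u)) = G_\tau*w - G_\tau*(wu)$, split the energy into a linear and a (negative) quadratic part,
\[
\sqrt{\tfrac{\tau}{\pi}}\,E^\tau(u) = \int_{\mathbb R^n} (wu)\,(G_\tau * w)\,d\bx \;-\; \int_{\mathbb R^n} (wu)\,\bigl(G_\tau * (wu)\bigr)\,d\bx \;=:\; \ell(wu) - q(wu),
\]
where $\ell$ is linear and $q(h) := \int_{\mathbb R^n} h\,(G_\tau*h)\,d\bx$ is the convolution quadratic form.

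Next I would compute the energy difference directly by completing the square. Writing $h := w(u^{k+1}-u^k)$ and using the self-adjointness of Gaussian convolution, $\int f\,(G_\tau*g)\,d\bx = \int g\,(G_\tau*f)\,d\bx$, one gets $q(wu^{k+1}) - q(wu^k) = q(h) + 2\int h\,(G_\tau*(wu^k))\,d\bx$, and hence
\[
\sqrt{\tfrac{\tau}{\pi}}\bigl(E^\tau(u^{k+1})-E^\tau(u^k)\bigr) = \int_{\mathbb R^n} h\,\bigl(G_\tau*w - 2\,G_\tau*(wu^k)\bigr)\,d\bx \;-\; q(h) = \int_{\mathbb R^n} w\,(u^{k+1}-u^k)\,\varphi^k\,d\bx \;-\; q(h),
\]
because $G_\tau*w - 2\,G_\tau*(wu^k) = G_\tau*(w(1-2u^k)) = \varphi^k$, exactly the quantity computed in Step~1 of Algorithm~\ref{a:MBO2}.

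Finally I would show both terms on the right are non-positive. For the quadratic remainder, $q(h)\ge 0$ since the Gaussian is positive-definite: by Plancherel $q(h) = \int \widehat{G_\tau}(\xi)\,|\hat h(\xi)|^2\,d\xi$ with $\widehat{G_\tau}(\xi) = e^{-\tau|\xi|^2} > 0$; the same holds on the periodic grid actually used, with the DFT in place of the Fourier transform, so the estimate survives discretization. For the linear term, the sign is built into Step~2: wherever $\varphi^k(\bx)\le 0$ we set $u^{k+1}(\bx)=1\ge u^k(\bx)$, so $w(u^{k+1}-u^k)\varphi^k\le 0$ at that point (using $w\ge 0$); wherever $\varphi^k(\bx)>0$ we set $u^{k+1}(\bx)=0\le u^k(\bx)$, which again makes the product $\le 0$. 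Thus the integrand is pointwise $\le 0$, and combining the two bounds yields $E^\tau(u^{k+1}) \le E^\tau(u^k)$.

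The calculation is routine once the decomposition into $\ell$ and $q$ is set up; the only structural ingredient is the positive-definiteness of $q$ (equivalently, the concavity of $E^\tau$), and the point that most deserves care is checking that this positivity is not destroyed by the periodic finite-difference discretization — it is not, by the DFT argument above. An equivalent, more abstract phrasing would be to invoke concavity directly: $E^\tau(u^{k+1}) \le E^\tau(u^k) + \langle \delta E^\tau(u^k),\,u^{k+1}-u^k\rangle$, and since $u^{k+1}$ minimizes the linearization over $\mathcal K$ the inner product is $\le 0$; the displayed identity is just this inequality made explicit, with concavity defect $q(h)$.
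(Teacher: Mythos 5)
Your proof is correct and follows essentially the same route as the paper's: both decompose $E^\tau$ into a linear part plus a negative-definite quadratic part, identify the energy difference as (linearization decrease) minus the quadratic form of $|d|^{p/2}(u^{k+1}-u^k)$, and use the thresholding rule for the first term and positive-definiteness of the Gaussian for the second. The only cosmetic difference is that the paper establishes $q(h)\ge 0$ via the semigroup factorization $G_\tau = G_{\tau/2}*G_{\tau/2}$ (writing $q(h)=\int (G_{\tau/2}*h)^2$) rather than your Plancherel argument; these are interchangeable.
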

\begin{proof}
As for $E^\tau(u)$ defined in \eqref{eq:approx1}, the linearization of $E^\tau(u)$ at $u^k$ is defined by:
$$L^\tau(u,u^k)  = \sqrt{\frac{\pi}{\tau}} \int_{\mathbb R^n} |d|^{\frac{p}{2}} u G_\tau * (1-2u^k) \ d\bx.$$
Note that we have \[E^\tau(u^k) =  L^\tau(u^k,u^k)+ \sqrt{\frac{\pi}{\tau}}\int_{\mathbb R^n} |d|^{\frac{p}{2}}  u^k G_\tau * \left(|d|^{\frac{p}{2}} u^k\right)\ d \bx\] and 
\begin{align*}
E^\tau(u^{k+1}) =  & L^\tau(u^{k+1},u^k) + 2\sqrt{\frac{\pi}{\tau}}\int_{\mathbb R^n} |d|^{\frac{p}{2}}  u^{k+1} G_\tau * \left(|d|^{\frac{p}{2}} u^k\right) \ d \bx  -\sqrt{\frac{\pi}{\tau}}\int_{\mathbb R^n} |d|^{\frac{p}{2}}  u^{k+1} G_\tau * \left(|d|^{\frac{p}{2}} u^{k+1}\right) \ d \bx.  \end{align*}
Because $u^{k+1}$ is the solution from the sequential linear programming, we have $L^\tau(u^{k+1},u^k) \leq L^\tau(u^k,u^k)$. Then, we compute 
\begin{align*}
E^\tau(u^{k+1})  - E^\tau(u^k) = L^\tau(u^{k+1},u^k) - L^\tau(u^k,u^k) + \mathcal L
\end{align*}
where \begin{align*}
\mathcal L = & - \sqrt{\frac{\pi}{\tau}}\int_{\mathbb R^n}  |d|^{\frac{p}{2}}  (u^{k+1}-u^k)  G_\tau * \left(|d|^{\frac{p}{2}}  (u^{k+1}-u^k) \right)  \ d \bx \\
=& - \sqrt{\frac{\pi}{\tau}} \int_{\mathbb R^n}  \left[G_{\tau/2} * \left(|d|^{\frac{p}{2}}  (u^{k+1}-u^k) \right) \right]^2  \leq 0 .\end{align*}
Therefore, we are led that 
$E^\tau(u^{k+1})  - E^\tau(u^k) \leq 0$.\end{proof}

\section{Interpretations of interface dynamics.}\label{sec:connection}
Note that, at each iteration, both Algorithm~\ref{a:MBO} and Algorithm~\ref{a:MBO2} start from an indicator function and end by another indicator function, which implicitly determines a motion of a front. The free parameter $\tau$ can be interpreted as the time step in the dynamics of the interface.

When $|d|^p = 1, \forall \bx \in \Omega$, it is easy to see that both algorithms reduce to the original MBO method \cite{merriman1992diffusion} where the front evolves along its normal direction by mean curvature. In this case, if there is no other constraint (\eg, volume constraint), the front evolves to the unique minimizer (\ie, the global minimizer $\emptyset$). 

However, when $d(\bx)$ is varying in the space, we perform the following asymptotic expansions to compute the motion law of the front during the iterations. For the convenience, we write $\psi = |d|^{\frac{p}{2}}$ and use an asymptotic expansion to expand $\varphi^k(\bx)$ in Algorithm~\ref{a:MBO2} with respect to a small parameter $\tau$. Without loss of generality, we set up the interface as that in Figure~\ref{fig:diag}. Specifically, we assume that  $u^k$ is the indicator function for the region where $x_2 \geq g(x_1)$ in Figure~\ref{fig:diag}. Furthermore, we assume the point of interest is the origin and $g'(0) = 0$.

\begin{figure}[t!]
\centering
\includegraphics[width = 0.8 \textwidth, clip, trim = 4cm 2cm 4cm 0cm]{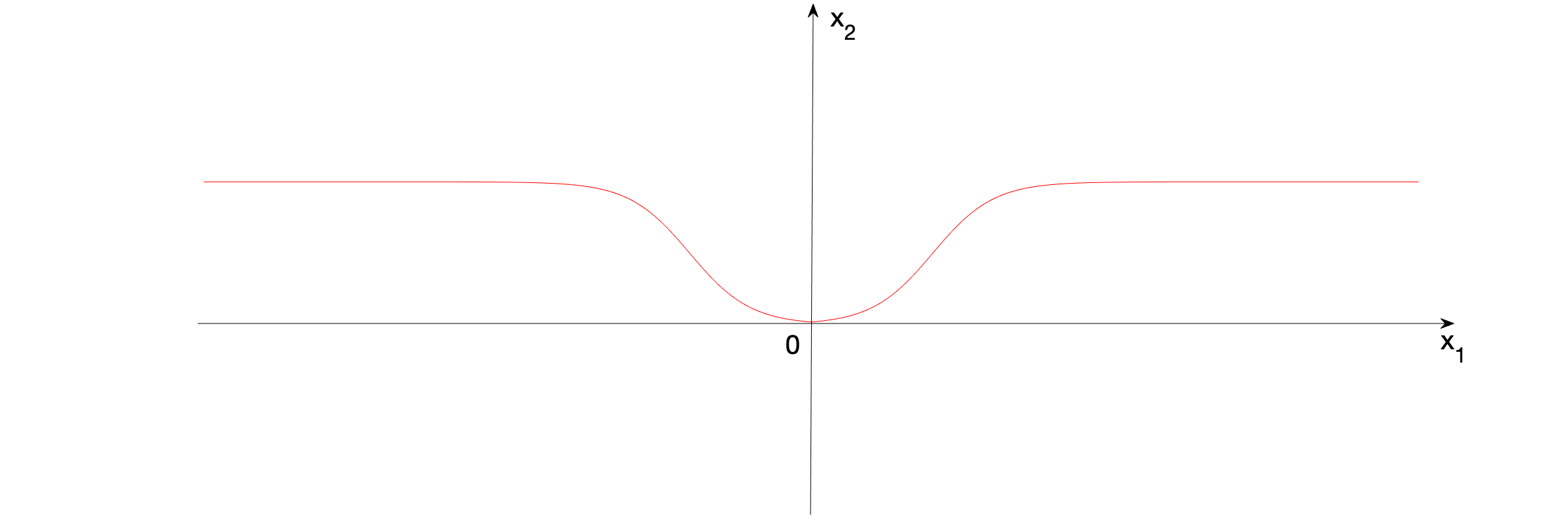}
\caption{A diagram for the set up for the asymptotic analysis. See Section~\ref{sec:connection}.} \label{fig:diag}
\end{figure}

In the follows, we focus on expanding $\varphi^k$ into a series with respect $\tau$ and find zero level set of $\phi^k(\bx)$ which corresponds to the new position of the front (according to Algorithm~\ref{a:MBO2}):
\begin{align}
\varphi^k(\bx)& = G_\tau * \left(\psi (1-2u^k)\right)  \nonumber  \\
& =\frac{1}{4\pi\tau} \int_{-\infty}^{\infty} \int_{-\infty}^{\infty}  \exp\left(-\frac{|\bx-\by|^2}{4\tau}\right)\psi(\by)(1-2u^k(\by))  \ dy_2dy_1 \\
& = \frac{1}{4\pi\tau} \int_{-\infty}^{\infty} \int_{-\infty}^{\infty}  \exp\left(-\frac{|\bx-\by|^2}{4\tau}\right)\psi(\by) \ dy_2 dy_1 - \frac{1}{2\pi\tau} \int_{-\infty}^{\infty} \int_{g(y_1)}^{\infty}  \exp\left(-\frac{|\bx-\by|^2}{4\tau}\right)\psi(\by) \ dy_2dy_1. \nonumber 
\end{align}
Evaluating $\varphi^k(\bx)$ at $(0,x_2)$ (\ie; the front moves along its normal direction), we have
\begin{align}
\varphi^k(0,x_2) = I_1-I_2
\end{align}
where 
\[I_1 =  \frac{1}{4\pi\tau} \int_{-\infty}^{\infty} \int_{-\infty}^{\infty}  \exp\left(-\frac{y_1^2+(x_2-y_2)^2}{4\tau}\right)\psi(y_1,y_2) \ dy_2dy_1 \]
and
\[I_2 = \frac{1}{2\pi\tau} \int_{-\infty}^{\infty} \int_{g(y_1)}^{\infty} \exp\left(-\frac{y_1^2+(x_2-y_2)^2}{4\tau}\right)\psi(y_1,y_2) \ dy_2dy_1. \]
Assume $\psi(y_1,y_2)$ is smooth almost everywhere and expand $\psi(y_1,y_2)$ into its Taylor's series around $(0, x_2)$:
\begin{align}
\psi(y_1,y_2) = \psi_{0,0}+y_1 \psi_{1,0}+(y_2-x_2) \psi_{0,1}+\frac{y_1^2}{2} \psi_{2,0}+\frac{(y_2-x_2)^2}{2} \psi_{0,2} + y_1(y_2-x_2)\psi_{1,1}+ \cdots
\end{align}
where $\psi_{m,n}$ denotes the mixed partial derivative at $(0,x_2)$ with $m$-th order derivative with respect to $y_1$ and $n$-th order derivative with respect to $y_2$. 

As for $I_1$, direct computation yields
\begin{align}
I_1  &= \frac{1}{4\pi\tau} \int_{-\infty}^{\infty} \int_{-\infty}^{\infty}  \exp\left(-\frac{y_1^2+y_2^2}{4\tau}\right)\psi(y_1,y_2+x_2) \ dy_2dy_1 \nonumber \\
& =  \sum_{m,n=0}^\infty \psi_{m,n} \frac{1}{4\pi\tau} \int_{-\infty}^{\infty} \int_{-\infty}^{\infty}  \exp\left(-\frac{y_1^2+y_2^2}{4\tau}\right) y_1^m y_2^n
 \ dy_2dy_1 \\
 & =  \sum_{m,n=0}^\infty \psi_{m,n} \frac{1}{4\pi\tau} \int_{-\infty}^{\infty}  \exp\left(-\frac{y_1^2}{4\tau}\right) y_1^m \ dy_1 \int_{-\infty}^{\infty} \exp\left(-\frac{y_2^2}{4\tau}\right) y_2^n \ dy_2 \nonumber \\
 & = \sum_{m,n=0}^\infty \psi_{m,n} \eta_{m,n} \nonumber
\end{align}
where $\eta_{m,n} = \xi_m \xi_n$ and $$\xi_m = \frac{1}{2\sqrt{\pi\tau}} \int_{-\infty}^\infty e^{-\frac{y^2}{4\tau}} y^m \ dy$$ which can be explicitly computed:
\[ \xi_m = \begin{cases} 0  & \ \textrm{if}  \  \ m \  \  \textrm{is odd}, \\
(m-1)!! \ 2^{m/2} \tau^{m/2} & \ \textrm{if}  \  \ m \  \  \textrm{is positive and even}, \\
1 & \ \textrm{if}  \  \ m =0.
\end{cases} 
\]
Therefore, 
\[I_1 =  \psi_{0,0}+2(\psi_{2,0}+\psi_{0,2})\tau+4(\psi_{2,2}+3\psi_{4,0}+3\psi_{0,4}) \tau^2+o(\tau^2).\]

As for $I_2$, because $g(0) = g'(0) = 0$, we expand $g(y_1)$ into its Taylor's series around $0$ by
\begin{align}
g(y_1) = \frac{g^{(2)}}{2}y_1^2 + \frac{g^{(3)}}{6}y_1^3+ \frac{g^{(4)}}{24}y_1^4 +\cdots   
\end{align} 
where $g^{(n)}$ denotes the $n$-th order derivative at $0$. After changing of variables, $I_2$ is then written into 
\begin{align}
I_2 = II_1-II_2
\end{align}
where \[II_1 = \frac{1}{2\pi\tau} \int_{-\infty}^{\infty} \int_{0}^{\infty} \exp\left(-\frac{y_1^2+y_2^2}{4\tau}\right)\psi(y_1,y_2+x_2) \ dy_2dy_1\]
and 
\[II_2 =\frac{1}{2\pi\tau} \int_{-\infty}^{\infty} \int_{0}^{-x_2+\frac{g^{(2)}}{2}y_1^2 + \frac{g^{(3)}}{6}y_1^3+ \frac{g^{(4)}}{24}y_1^4 +\cdots} \exp\left(-\frac{y_1^2+y_2^2}{4\tau}\right)\psi(y_1,y_2+x_2) \ dy_2dy_1.\]
For $II_1$, we have
\begin{align*}
II_1 = \sum_{m,n=0}^\infty \psi_{m,n} \eta'_{m,n} 
\end{align*}
where $\eta'_{m,n} = \xi_m \xi'_n$ and $$\xi'_n = \frac{1}{\sqrt{\pi\tau}} \int_0^\infty e^{-\frac{y^2}{4\tau}} y^n \ dy$$ 
which can be explicitly computed as follows:
\[\xi'_n = \begin{cases}
\frac{1}{\sqrt{\pi\tau}}(n-1)!!(2\tau)^{(n+1)/2} & \ \textrm{if}  \  \ n \ \  \textrm{is odd},  \\
\xi_n & \ \textrm{if}  \  \ n \ \ \textrm{is even}.
\end{cases}
\]
Therefore, 
\[II_1 = \psi_{0,0}+ \frac{2\psi_{0,1}}{\sqrt{\pi}}\tau^{1/2} + 2(\psi_{2,0}+\psi_{0,2} )\tau+ \frac{4(2\psi_{0,3}+\psi_{2,1})}{\sqrt{\pi}} \tau^{3/2}+4(\psi_{2,2}+3\psi_{4,0}+3\psi_{0,4})  \tau^2+o(\tau^2).  \]
For the computation of $II_2$, we further expand $e^{-\frac{y_2^2}{4\tau}}$ into its Taylor's series and compute:
\begin{align}
II_2  = \sum_{m,n=0}^\infty   \psi_{m,n} \zeta_{m,n}
\end{align}
where 
\begin{align}
\zeta_{m,n} = 
 \frac{1}{2\pi\tau}  \int_{-\infty}^\infty e^{-\frac{y_1^2}{4\tau}}y_1^m \int_{0}^{-x_2+\frac{g^{(2)}}{2}y_1^2 + \frac{g^{(3)}}{6}y_1^3+ \frac{g^{(4)}}{24}y_1^4 +\cdots} \left(y_2^n - \frac{1}{4\tau} y_2^{n+2} +\frac{1}{32\tau^2} y_2^{n+4}+ \cdots \right) \ dy_2 dy_1 .
\end{align}
Assume $x_2 \sim O(\tau)$ and write $x_2 = \tau \tilde x_2$ with $\tilde x_2 \sim O(1)$, then the leading orders in $\zeta_{m,n}$ are computed:
\[\begin{cases}
\zeta_{0,0} = & \frac{-\tilde x_2}{\sqrt{\pi}}\tau^{1/2} + \frac{ g^{(2)}}{\sqrt{\pi}}  \tau^{1/2}+ \left( \frac{g^{(4)}}{2\sqrt{\pi}}+\frac{\tilde x_2^3}{12 \sqrt{\pi}}- \frac{g^{(2)}\tilde x_2}{4\sqrt{\pi}}\right) \tau^{3/2} +o(\tau^{3/2}) \\
\zeta_{1,0} = & \frac{2g^{(3)}}{\sqrt{\pi}} \tau^{3/2}+o(\tau^{3/2}) \\ 
\zeta_{0,1} = & \left(\frac{\tilde x_2^2}{2\sqrt{\pi}}- \frac{\tilde x_2 g^{(2)}}{\sqrt{\pi}} +\frac{3(g^{(2)})^2}{2\sqrt{\pi}} \right)\tau^{3/2} +o(\tau^{3/2}) \\
\zeta_{2,0}  = &\left(\frac{-2 \tilde x_2}{\sqrt{\pi}}+ \frac{6g^{(2)}}{\sqrt{\pi}} \right)\tau^{3/2} + o(\tau^{3/2}) \\
\zeta_{0,2} = &  o(\tau^{3/2})\\
\zeta_{1,1} = & o(\tau^{3/2})
\end{cases}\]
Therefore, 
\[II_2 = \psi_{0,0} (\frac{-\tilde x_2}{\sqrt{\pi}} + \frac{ g^{(2)}}{\sqrt{\pi}})  \tau^{1/2} +O(\tau^{3/2}) .\]
Collecting all leading terms in $I_1$, $II_1$, and $II_2$ yields
\[\phi^k(0,\tilde x_2) = -\frac{2\psi_{0,1}}{\sqrt{\pi}} \tau^{1/2} + \psi_{0,0} (\frac{-\tilde x_2}{\sqrt{\pi}} + \frac{ g^{(2)}}{\sqrt{\pi}})  \tau^{1/2} + O(\tau^{3/2}).\]
Set $\phi^k(0,\tilde x_2) = 0$, we then have 
\begin{align}
\psi_{0,0} \tilde x_2 = \psi_{0,0} g^{(2)} -2\psi_{0,1}. \label{eq:motionlaw}
\end{align}
Note that above analysis is only valid when the origin $(0,0)$ is not on the zero level set of $\psi$ because $\psi$ is not differentiable at points where $\psi = 0$. If $\psi$ is constant along the normal direction of the front (\ie, $\psi_{0,1} = 0$), the leading order behavior of the front motion is $g^{(2)}$ which denotes mean curvature. This reduces to the formal analysis to the original MBO method \cite{pierre92}. 

Now, we consider the case where the front is perturbed from the zero level set of $\psi$ and assume $\psi_{0,0} \sim O(\varepsilon)$ and $\psi_{0,1} \sim O(1)$. Then, the value of $\tilde x_2$ (the velocity of the front) is dominated by $-2\psi_{0,1}$. Therefore, every point on the front moves to the direction which decreases the value of $\psi$ (see, for example, a diagram in Figure~\ref{fig:velocity_diag}). Since $\psi = |d|^{p/2}$, in the leading order, the front evolves to $\psi = 0$. 

\begin{figure}[ht!]
\centering
\includegraphics[width = 0.5\textwidth, clip, trim = 4cm 5cm 2cm 4cm]{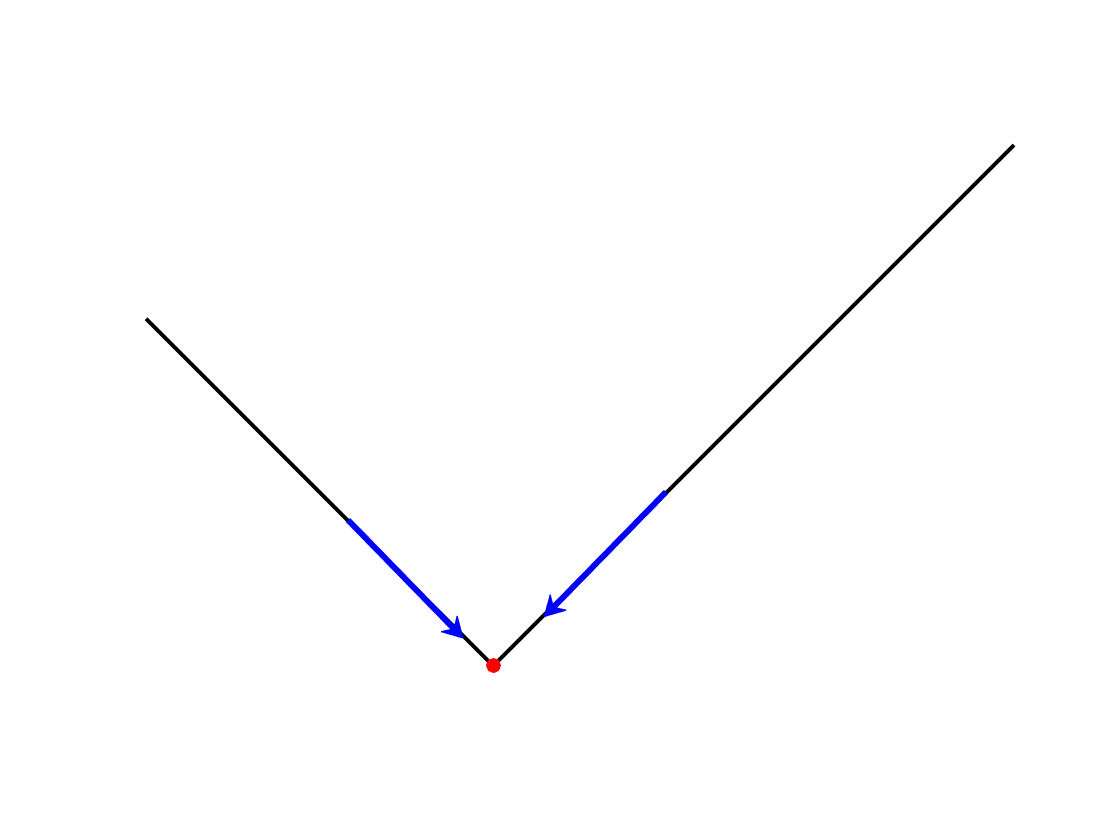}
\caption{A diagram for the direction of motion of the front. See Section~\ref{sec:connection}.} \label{fig:velocity_diag}
\end{figure}

\begin{rem}
We remark here that the above analysis only gives the formal leading order velocity of motion of the front. In \eqref{eq:motionlaw}, there is still one term regarding the mean curvature which may play an important role when $\psi_{0,0}$ is not small (\ie, the front is far away from the point cloud). This observation gives several suggestions on the setting of the algorithms: 1.) The initial guess should contain the point cloud and 2.) the initial choice $\tau$ should not be very large, otherwise, the front evolves to the global minimizer $\emptyset$ except the local minimizer.
\end{rem}

\begin{rem}
All above computation can be done for Algorithm~\ref{a:MBO} similarly.
\end{rem}

\section{Accelerations on Algorithms~\ref{a:MBO} and \ref{a:MBO2}.} \label{sec:accele}

As we mentioned in Section~\ref{sec:connection}, both Algorithm~\ref{a:MBO} and Algorithm~\ref{a:MBO2} are related to interface dynamics and $\tau$ is a free parameter which can be interpreted as the time step. Because the method is unconditional stable as shown in Section~\ref{sec:algorithm2}, the algorithms work for any large $\tau$. However, $\tau$ is also related to the approximation of the energy functional. We do need to use small $\tau$ to obtain a good approximation. In the discrete case, when the domain $\Omega$ is discretized by a uniform mesh,  the interface may stuck with a small $\tau$, where the indicator functions between two iterations are exactly same.  The whole algorithm is then stuck.

Based on above observations, we propose to accelerate Algorithms~\ref{a:MBO} and \ref{a:MBO2} with building a sequence $\tau_1 > \tau_2 >\tau_3>\cdots>\tau_N$. We start with solving Algorithms~\ref{a:MBO} and \ref{a:MBO2} with $\tau_1$ until convergence and then we use the obtained solution to build an initial guess of the Algorithms with $\tau_2$. This is repeated until we have obtained two exactly same solutions with $\tau_\ell$ and $\tau_{\ell+1}$. The algorithm is summarized into Algorithm~\ref{a:MBO3}.

\begin{algorithm}[ht!]
\DontPrintSemicolon
 \KwIn{$\Omega$: computational domain; $d(\bx)$: distance function to the point cloud; a sequence $\tau_1>\tau_2>\tau_3>\cdots >\tau_N$; $\tau =\tau_1$; $s=1$; and $u^0 \in \mathcal{B}$.}
 \KwOut{$u^\star \in \mathcal{B}$.}
 \While{not converged}{   
Run Algorithm~\ref{a:MBO} or \ref{a:MBO2} to obtain the stationary solution $u^\star$ at current $\tau$. \\
$s = s+1$ \\ 
$\tau = \tau_s$}
\caption{The accelerated version of Algorithm~\ref{a:MBO} or \ref{a:MBO2}. }
\label{a:MBO3}
\end{algorithm}
\begin{rem}
We remark here that $\tau_1, \tau_2, \cdots$ are not required to be chosen specifically or initially. One could simply choose $\tau_{s+1} = \frac{\tau_{s}}{2}$ each time after the algorithms with $\tau_s$ converges.
\end{rem}

The advantages of Algorithm~\ref{a:MBO3} can be understood as follows:
\begin{enumerate}
\item It avoids the solution to be stuck. 
\item The initial large $\tau$ makes the solution converge faster.
\item Algorithm~\ref{a:MBO3} converges at the a relatively small $\tau$, which guarantees the accuracy of the solution. 
\end{enumerate}

\section{Numerical implementation and experiments} \label{sec:num}
\subsection{Numerical implementation}\label{sec:implementation}
In this section, we discuss the implementation of the whole algorithm. When the point cloud is fixed, $d(\bx)$ is fixed and only needs to be evaluated once. That is, from the point cloud $\mathcal C$, we expect to solve 
\begin{equation} \label{eq:distance}
\begin{cases}
|\nabla d| = 1, \ \ \bx \in \Omega, \\
d(\bx) = 0,  \ \  \bx \in \mathcal C.
\end{cases}\end{equation}
Following \cite{he2019fast}, we simply choose a first order Lax--Friedrich scheme to discretize the relaxed dynamical equation of \eqref{eq:distance}:
\begin{equation}
d_{i,j}^{n+1} = \frac{1}{2} \left(1-|\nabla d_{i,j}^n|+\frac{d_{i+1,j}^n+d_{i-1,j}^n}{2}+\frac{d_{i,j+1}^n+d_{i,j-1}^n}{2}\right)
\end{equation}
and use a fast sweeping method \cite{Kao_2004} to solve it with a linear complexity.

Because the point cloud is discrete, if one plots the $\varepsilon$-level set (relatively small $\varepsilon$) of the computed $d(\bx)$, the contours are small circles around each points in the cloud (see Figure~\ref{fig:distance_diag}). Figure~\ref{fig:distance_diag} tells that a distance function computed from discrete point cloud does not imply the surface reconstruction from point cloud, especially in high dimensional cases where the ordering of points are much more complicated than 2-dimensional cases. That's the reason that we consider minimizing the objective function to obtain the reconstructed surface instead of direct interpolation from the computed distance function.

\begin{figure}[ht!]
\centering
\includegraphics[width = 0.3 \textwidth, clip, trim = 13cm 8cm 11cm 7cm]{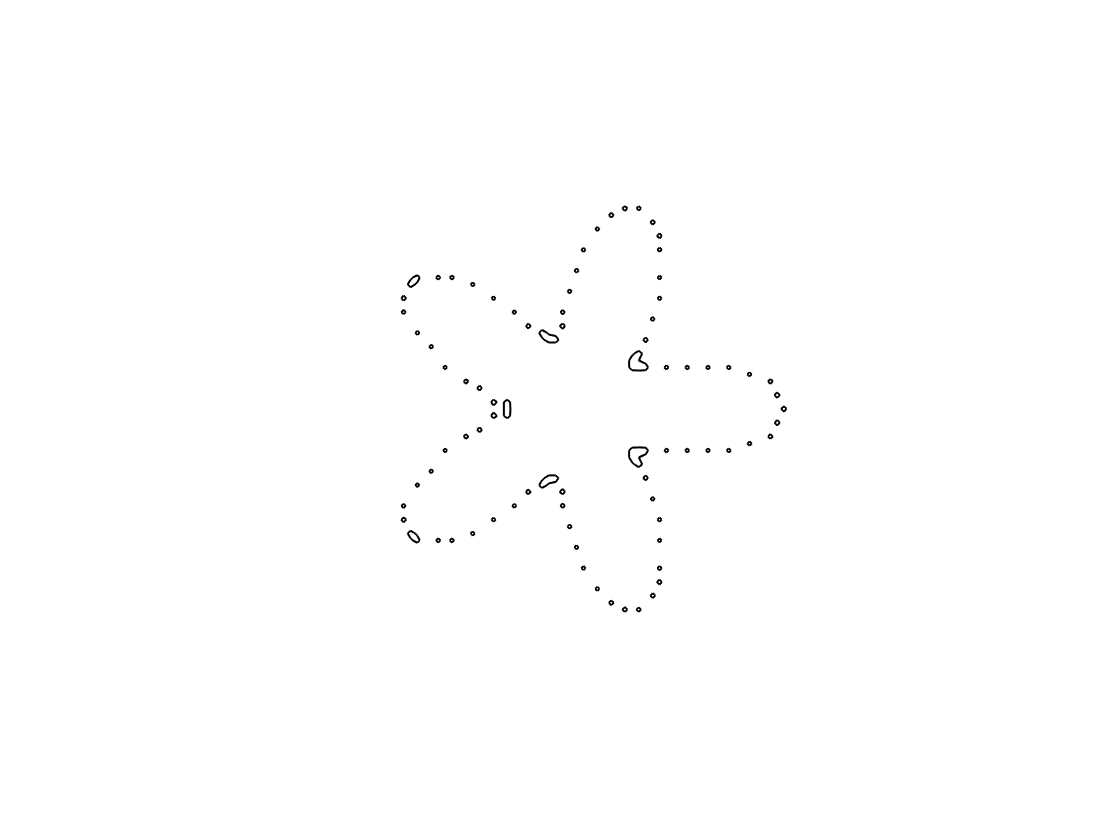}
\caption{A diagram for a $\epsilon$-level set of the distance function computed from \eqref{eq:distance}.  See Section~\ref{sec:implementation}.}\label{fig:distance_diag}
\end{figure}

Besides above, we only need to compute convolutions in each iteration.  They are efficiently computed using fast Fourier transform (FFT) based on a uniformly discretized computational domain. 

\subsection{Numerical experiments.} \label{sec:num_experiments}
In this section, we provide a variety of numerical experiments to show the performance of the proposed algorithm. We implemented all algorithms in MATLAB.  All reported results were obtained on a laptop with a 2.7GHz Intel Core i5 processor and 8GB of RAM.  In all experiments, the domain $\Omega = [-\pi,\pi]^n$ is discretized with uniform meshes.  If there is no other statement, the CPU time we report is the total CPU time for iterations after $d(\bx)$ is computed and $p=2$. For the convenience, we denote Algorithm~\ref{a:MBO3}\_\ref{a:MBO} (or \ref{a:MBO3}\_\ref{a:MBO2}) as Algorithm~\ref{a:MBO3} cooperated with Algorithm~\ref{a:MBO} (or \ref{a:MBO2}).

\subsubsection{Comparisons between Algorithms~\ref{a:MBO}-~\ref{a:MBO3}.} \label{sec:propertiescheck}
In this section, we perform several experiments to show the comparisons among Algorithms~\ref{a:MBO}-\ref{a:MBO3}. We use two point clouds in this experiment as displayed in Figure~\ref{fig:ex1}. The 2-dimensional point cloud is generated using $N=200$ uniform grids $\theta_i$ ($i\in [N]$) in $[0,2\pi]$:
\[\begin{cases}
x_i = r_i\cos(\theta_i),\\
y_i = r_i\sin(\theta_i)
\end{cases}\]
where $r_i= 1+0.5\cos(5(\theta_i-\pi/2))$. The 3-dimensional point cloud is generated from $N = 2000$ random points $(u_i,v_i)$, $i \in [N]$ (i.i.d. from uniform distribution) in $[0,2\pi]^2$ by:
\[\begin{cases}
x_i = (1+0.5\cos(u_i))\cos(v_i), \\
y_i = (1+0.5\cos(u_i))\sin(v_i), \\
z_i = 0.5\sin(u_i).
\end{cases}
\]

\begin{figure}[ht!]
\centering
\includegraphics[width = 0.3\textwidth, clip, trim = 2cm 2cm 2cm 1cm]{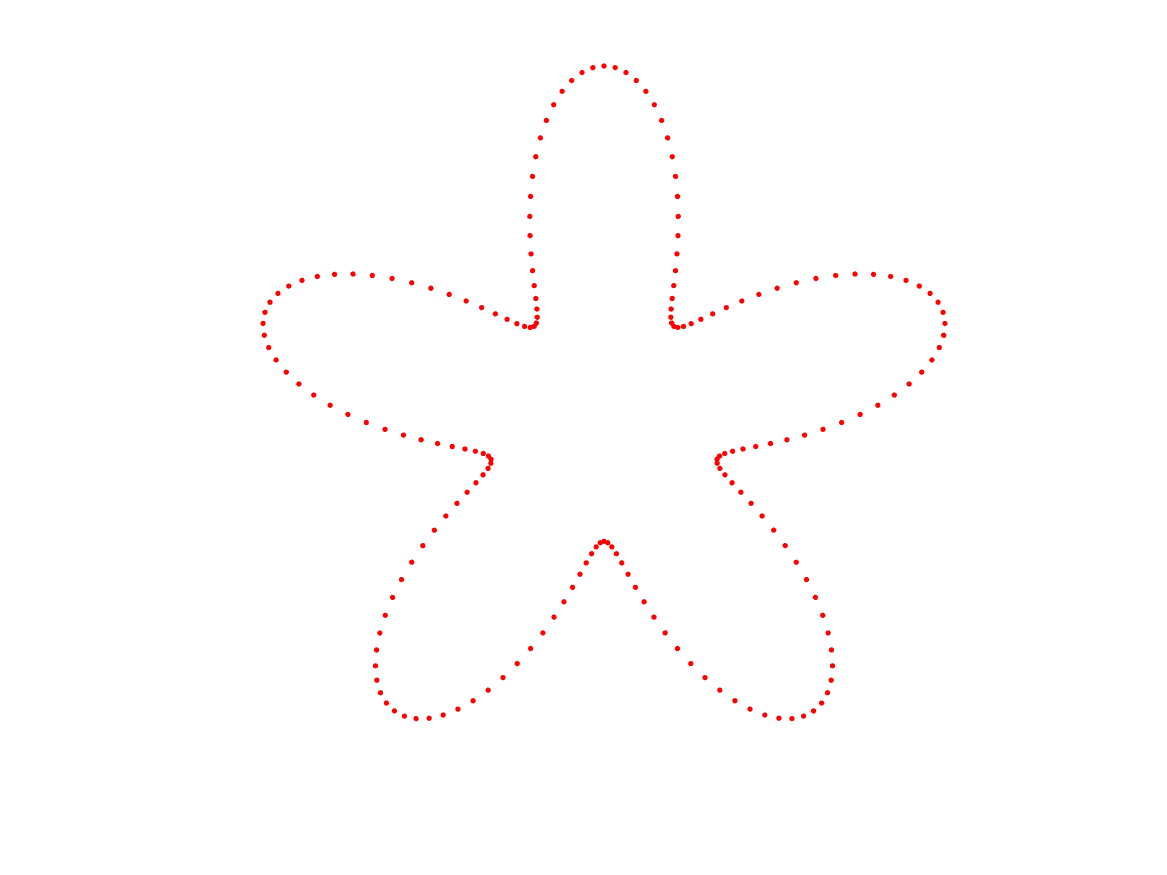}\qquad \qquad  
\includegraphics[width = 0.3\textwidth, clip, trim = 2cm 2cm 2cm 1cm]{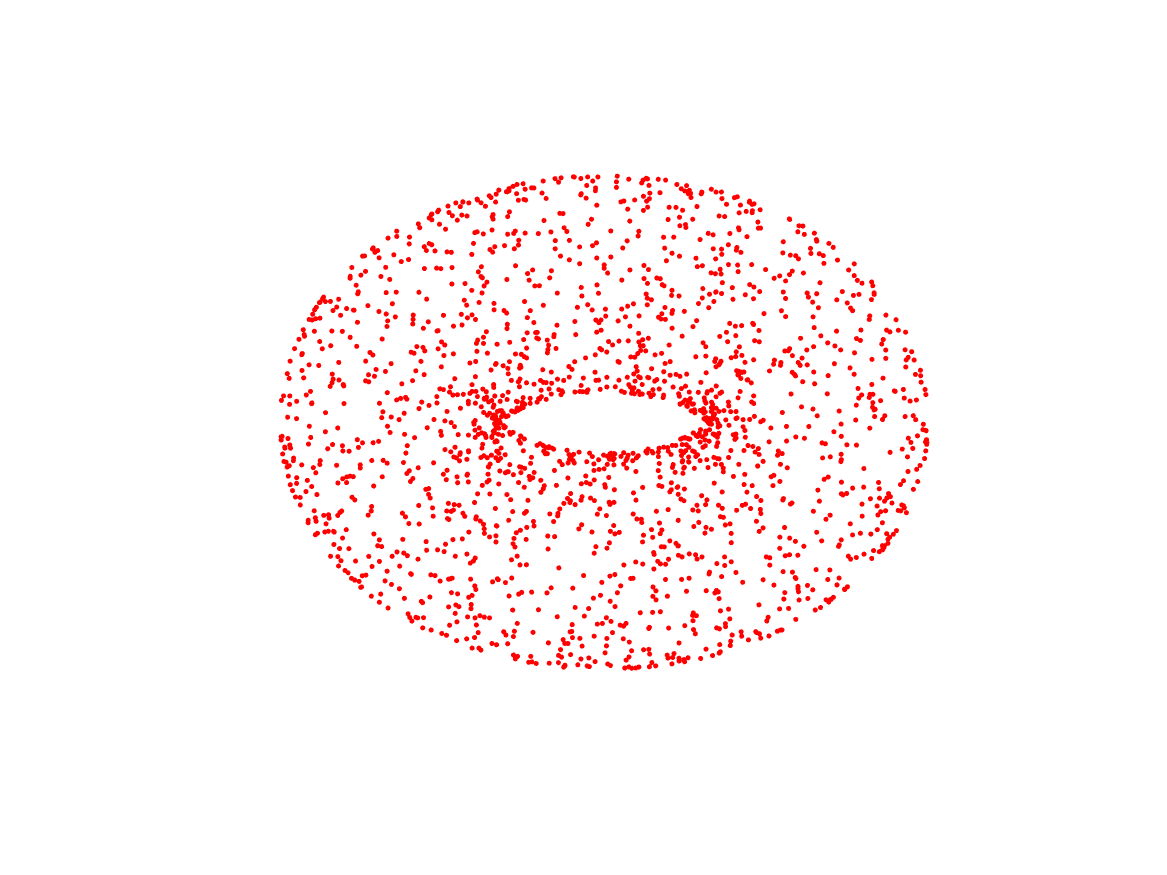}
\caption{{\bf Left:} Point cloud of five folds. {\bf Right:} Point cloud of a torus. See Section~\ref{sec:propertiescheck}.} \label{fig:ex1}
\end{figure}

Figure~\ref{fig:ex1_comparison1} displays results from Algorithms~\ref{a:MBO} and \ref{a:MBO2} with different values of $\tau$ for the 2-dimensional point cloud. Figure~\ref{fig:ex1_adaptive} displays the results obtained using Algorithms~\ref{a:MBO3}\_\ref{a:MBO} and \ref{a:MBO3}\_\ref{a:MBO2}. In Figure~\ref{fig:ex1_comparison1}, for a small $\tau = 0.0025$, the solution is stuck at an incorrect solution which is just because of the spatial discretization. In Algorithm~\ref{a:MBO3}, this is easily avoided by using the adaptive in time strategy. In all experiments, the computational domain $[-\pi,\pi]^2$ is discretized by $128^2$ uniform grids. 

\begin{figure}[ht!]
\begin{center}
\begin{tabular}{c|c|c|c|c|c}
& $\tau = 0.02$ & $\tau = 0.015$ & $\tau = 0.01$ & $\tau = 0.005$ & $\tau = 0.0025$   \\
 Algorithm~\ref{a:MBO}& $\includegraphics[width = 0.15\textwidth,clip, trim = 6.5cm 4cm 4cm 3.5cm ]{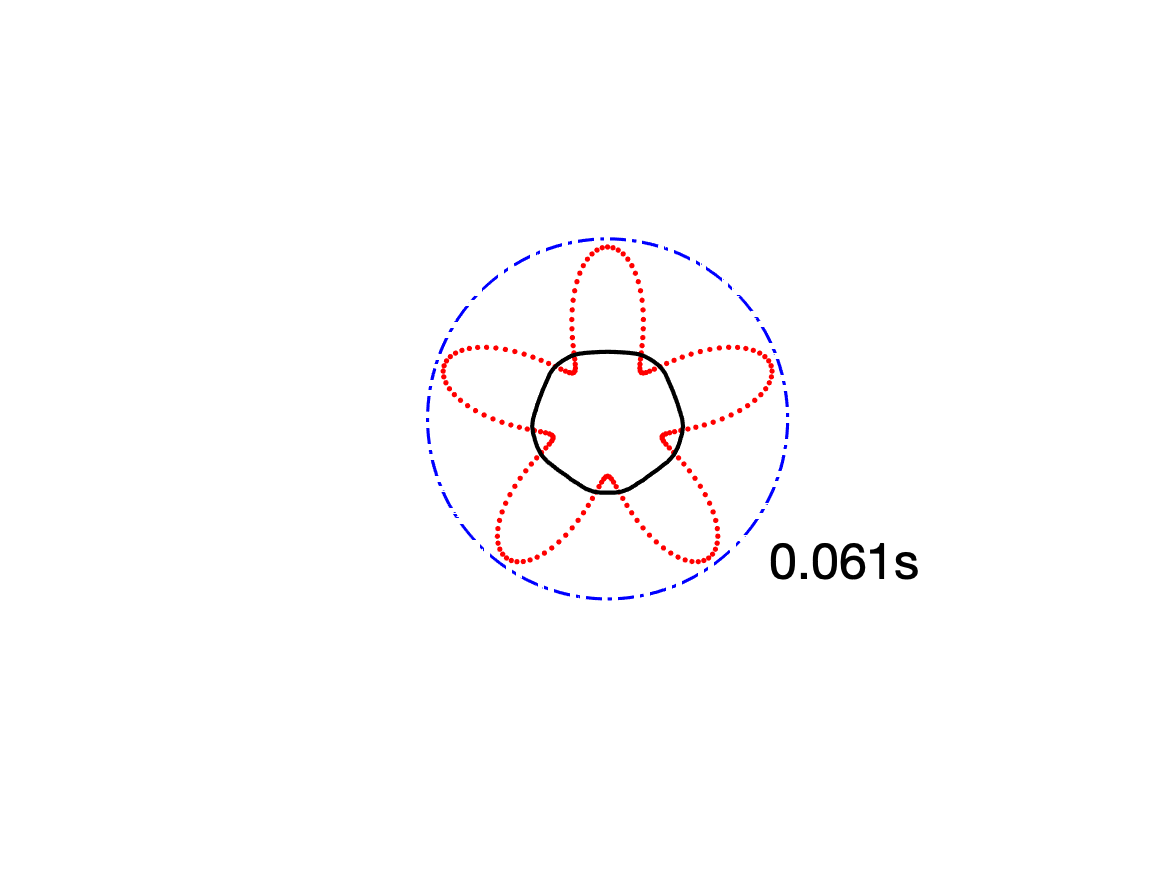}$ & $\includegraphics[width = 0.15\textwidth,clip, trim = 6.5cm 4cm 4cm 3.5cm ]{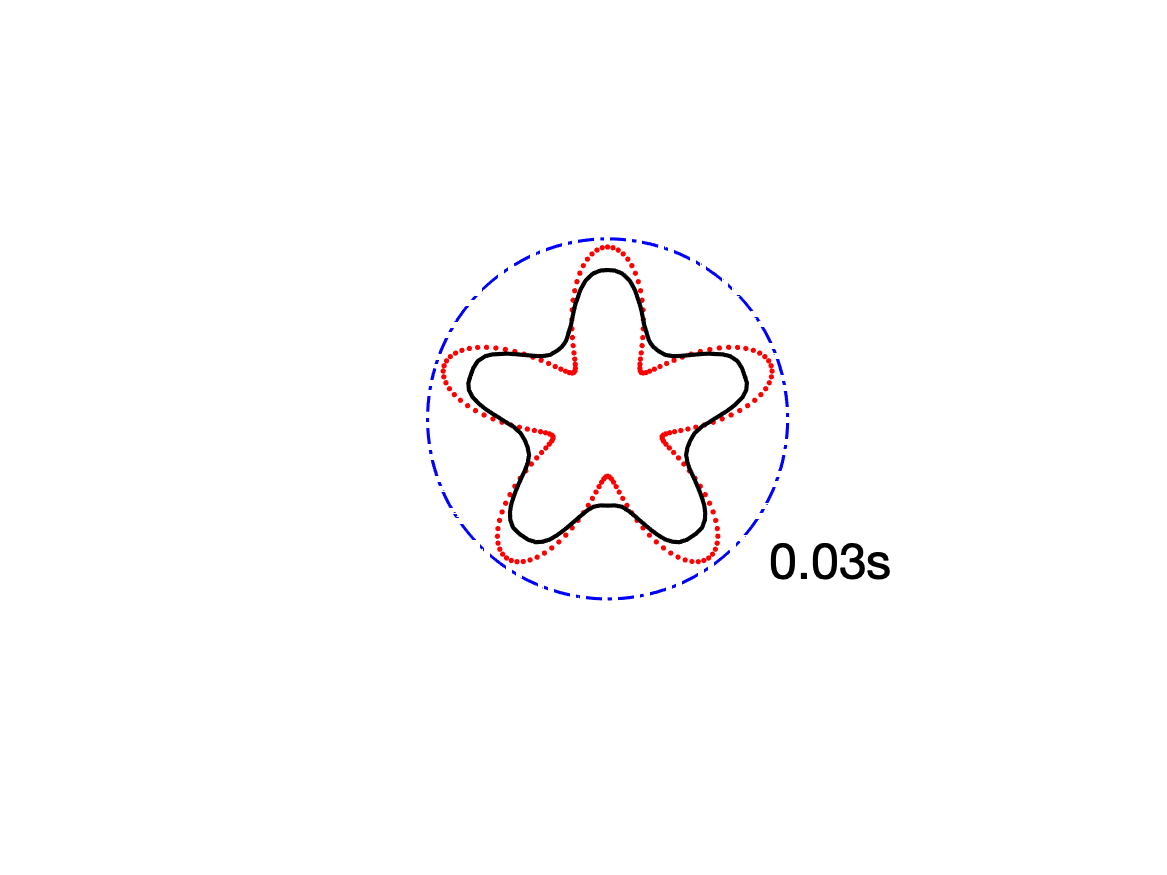}$ & $\includegraphics[width = 0.15\textwidth,clip, trim = 6.5cm 4cm 4cm 3.5cm ]{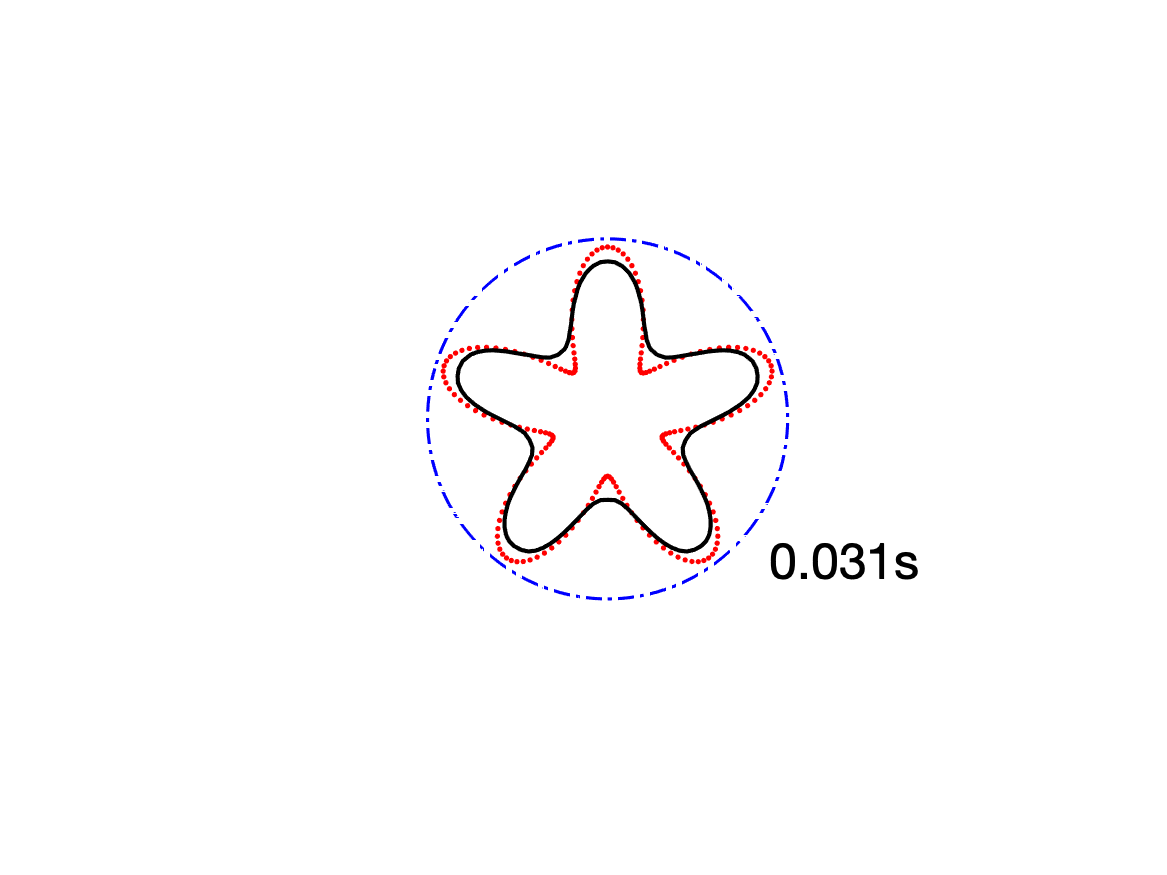}$ & $\includegraphics[width = 0.15\textwidth,clip, trim = 6.5cm 4cm 4cm 3.5cm ]{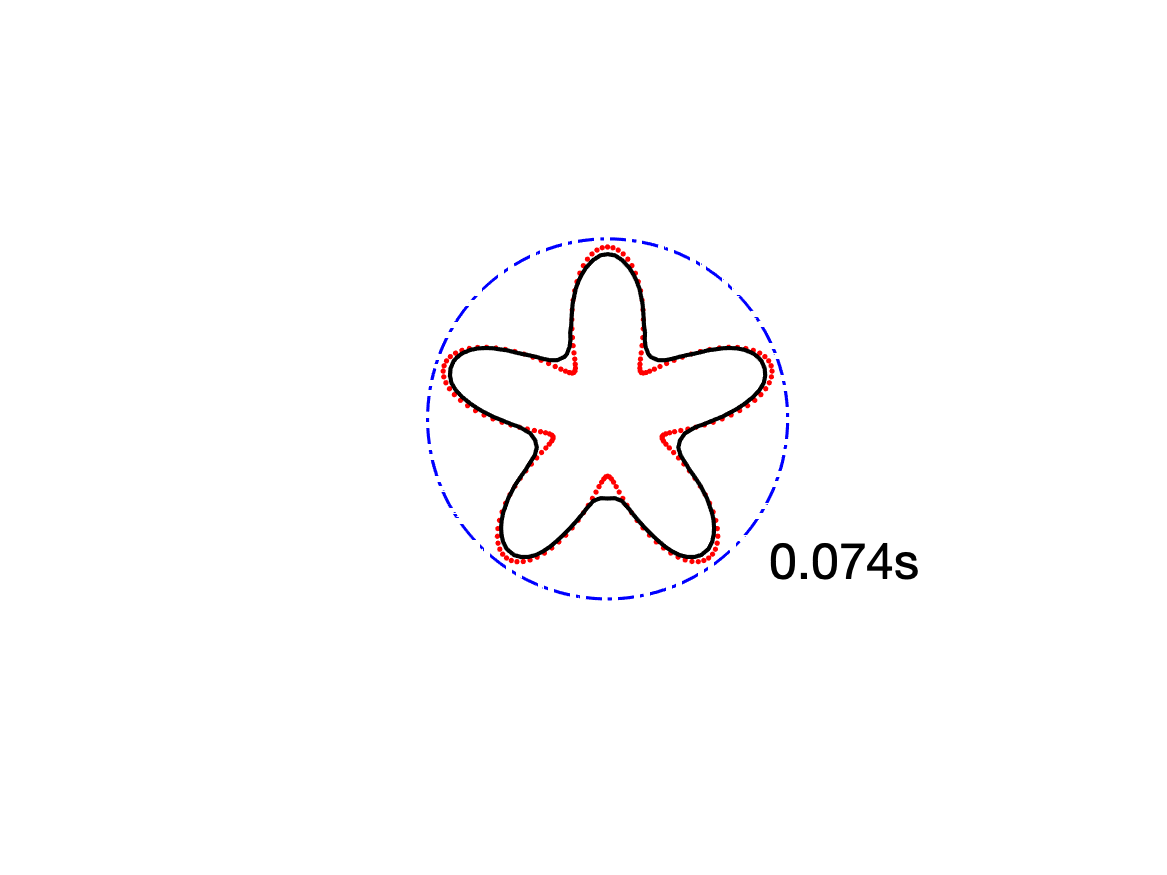}$ & $\includegraphics[width = 0.15\textwidth,clip, trim = 6.5cm 4cm 4cm 3.5cm ]{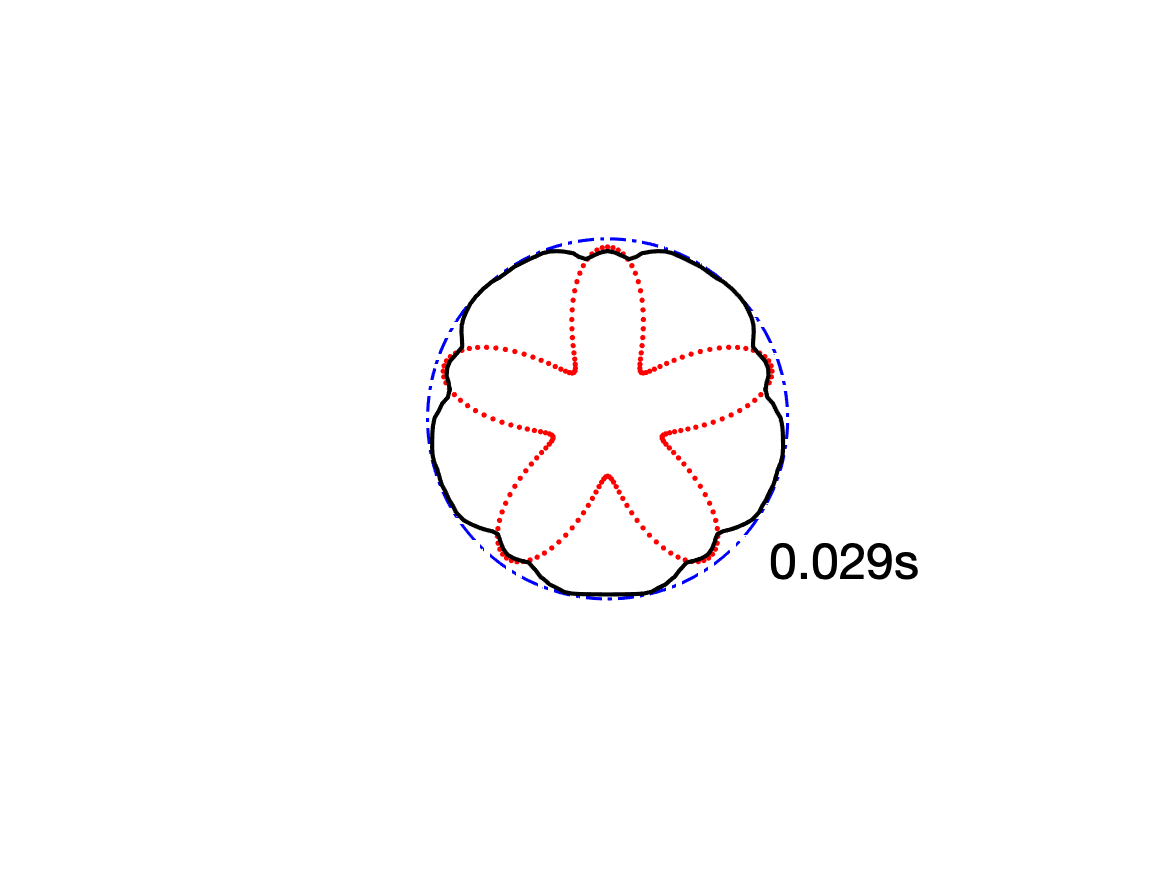}$ \\
Algorithm~\ref{a:MBO2}& $\includegraphics[width = 0.15\textwidth,clip, trim = 6.5cm 4cm 4cm 3.5cm ]{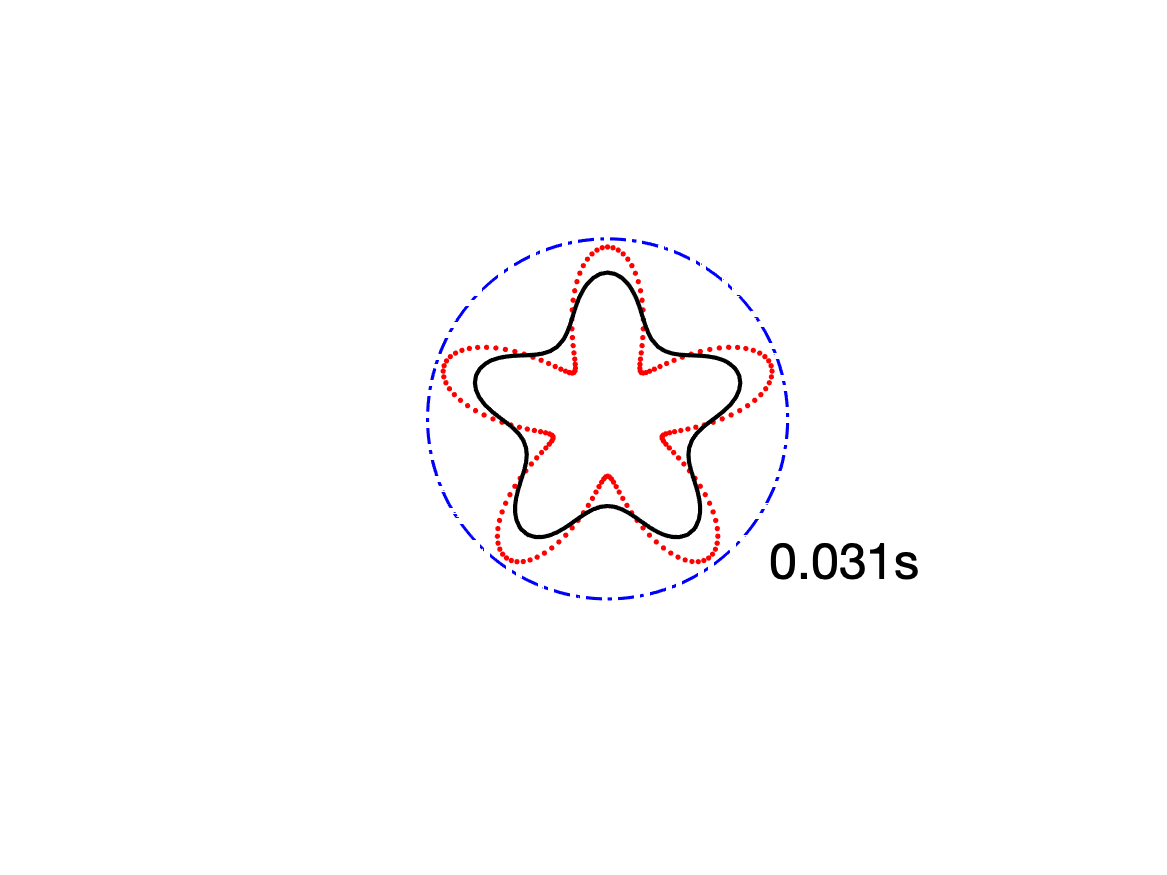}$ &  $\includegraphics[width = 0.15\textwidth,clip, trim = 6.5cm 4cm 4cm 3.5cm ]{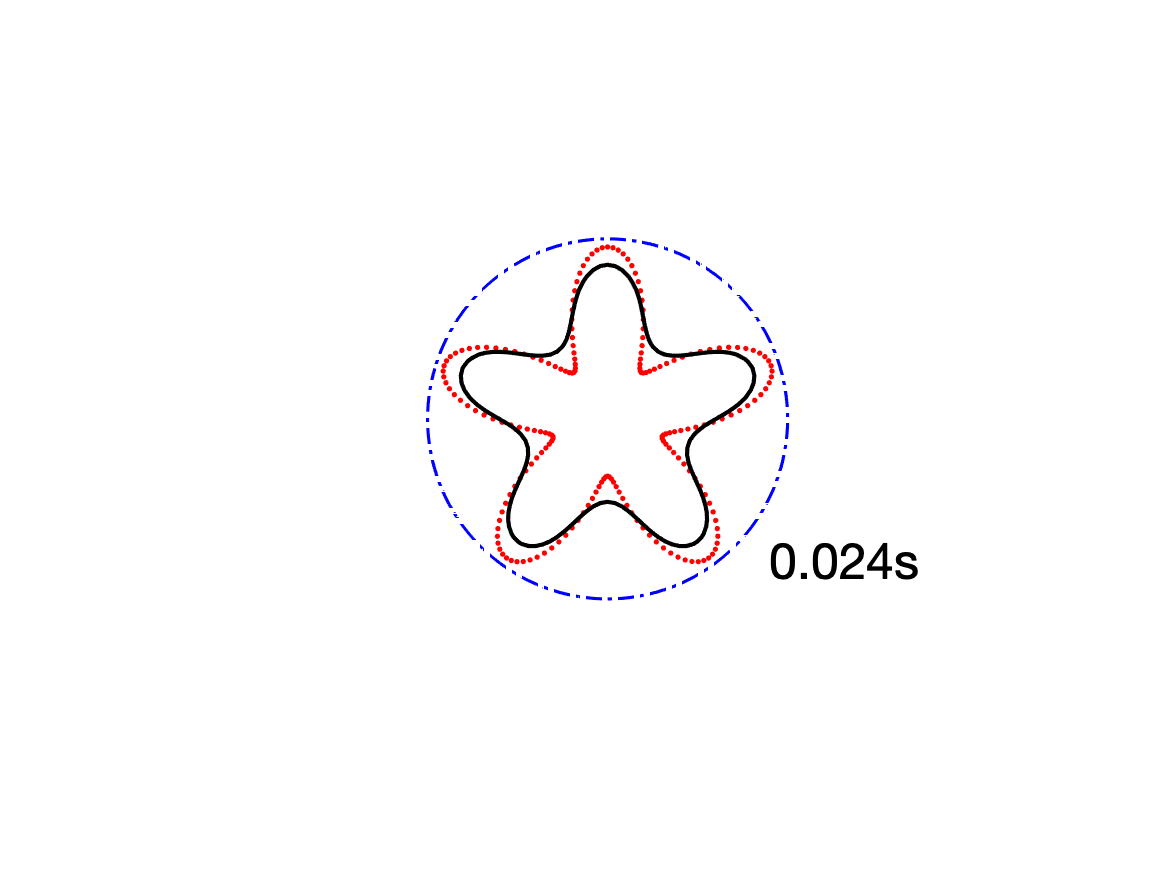}$  & $\includegraphics[width = 0.15\textwidth,clip, trim = 6.5cm 4cm 4cm 3.5cm ]{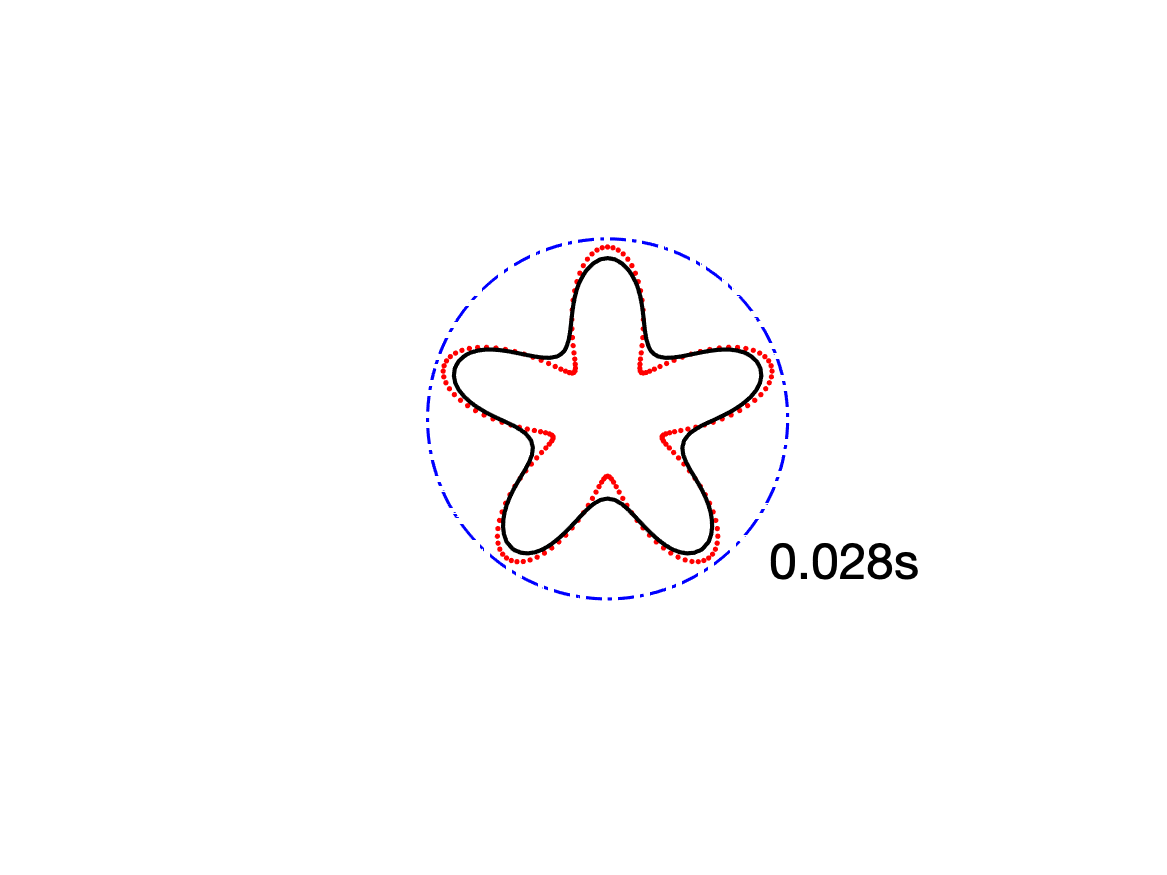}$ & $\includegraphics[width = 0.15\textwidth,clip, trim = 6.5cm 4cm 4cm 3.5cm ]{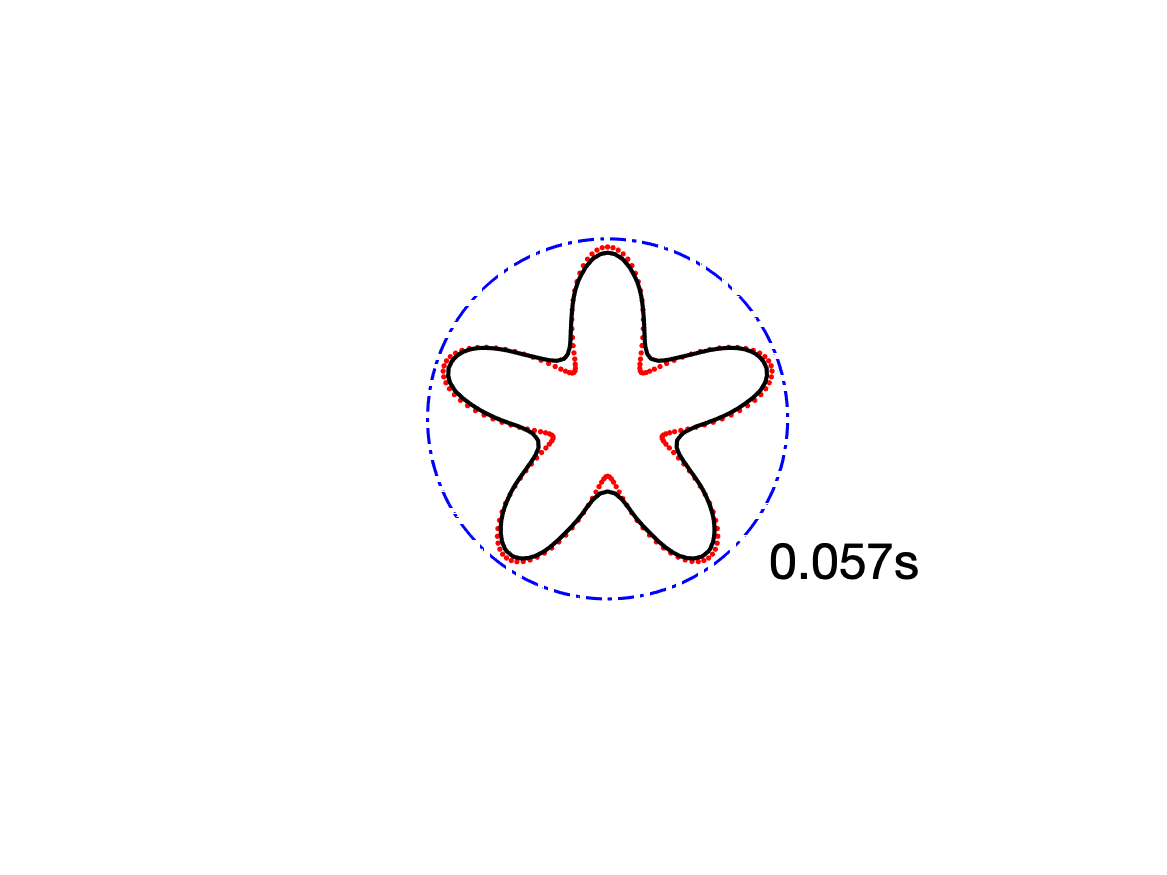}$  & $\includegraphics[width = 0.15\textwidth,clip, trim = 6.5cm 4cm 4cm 3.5cm ]{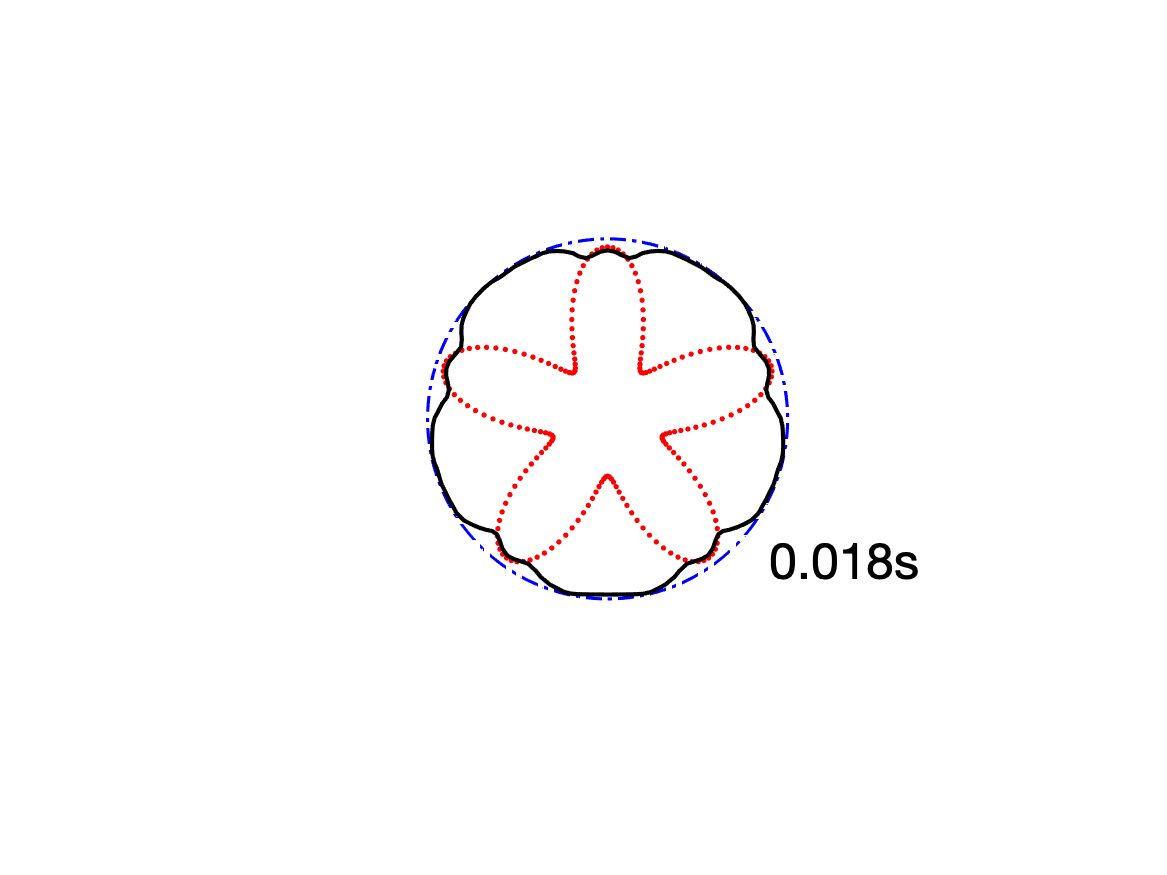}$ 
\end{tabular}
\end{center}
\caption{{\bf First row:} Computed results from Algorithm~\ref{a:MBO} with different $\tau$. {\bf Second row:} Computed results from Algorithm~\ref{a:MBO2} with different $\tau$. {\bf Blue curves:} Initial guess. {\bf Black curves:} Reconstructed surface.  See Section~\ref{sec:propertiescheck}.} \label{fig:ex1_comparison1}
\end{figure}

\begin{figure}[ht!]
\centering
 \includegraphics[width = 0.3\textwidth,clip, trim = 6.5cm 4cm 4cm 3.5cm ]{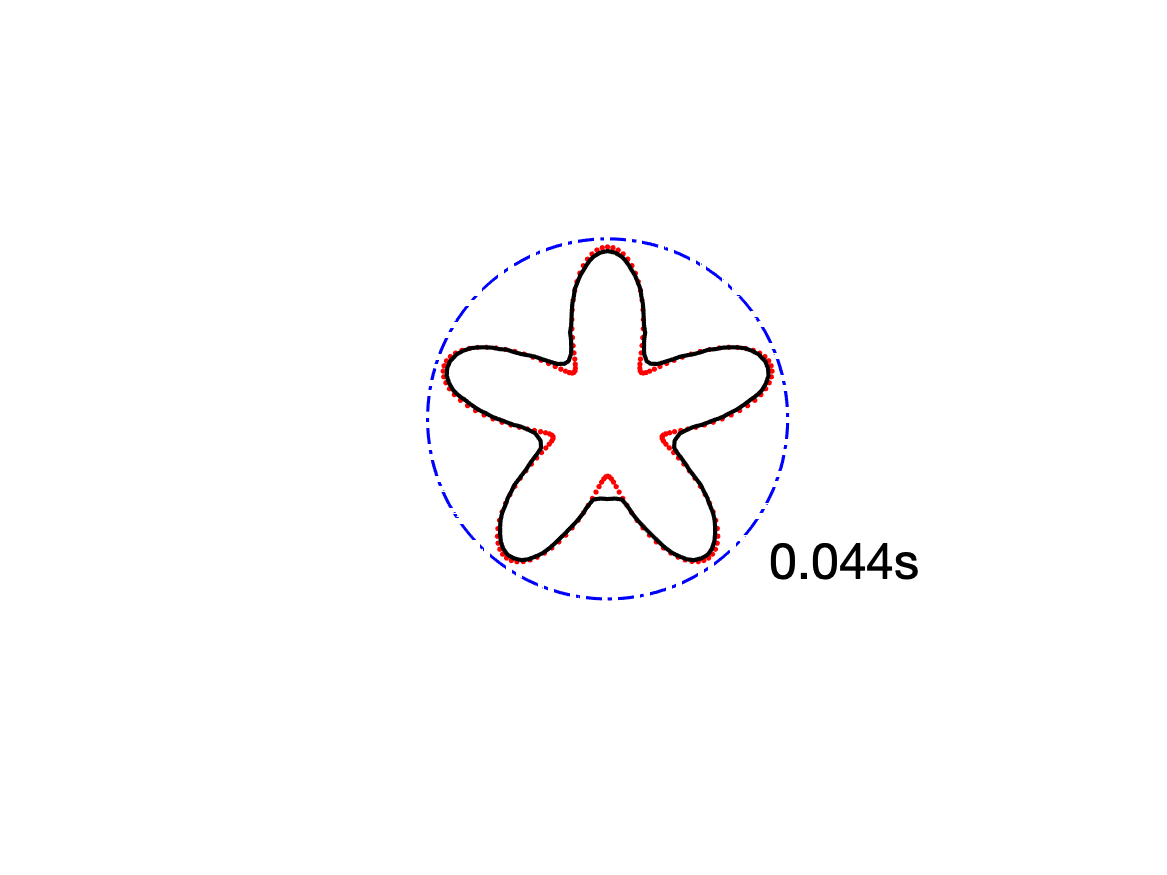} \qquad \qquad  
  \includegraphics[width = 0.3\textwidth,clip, trim = 6.5cm 4cm 4cm 3.5cm ]{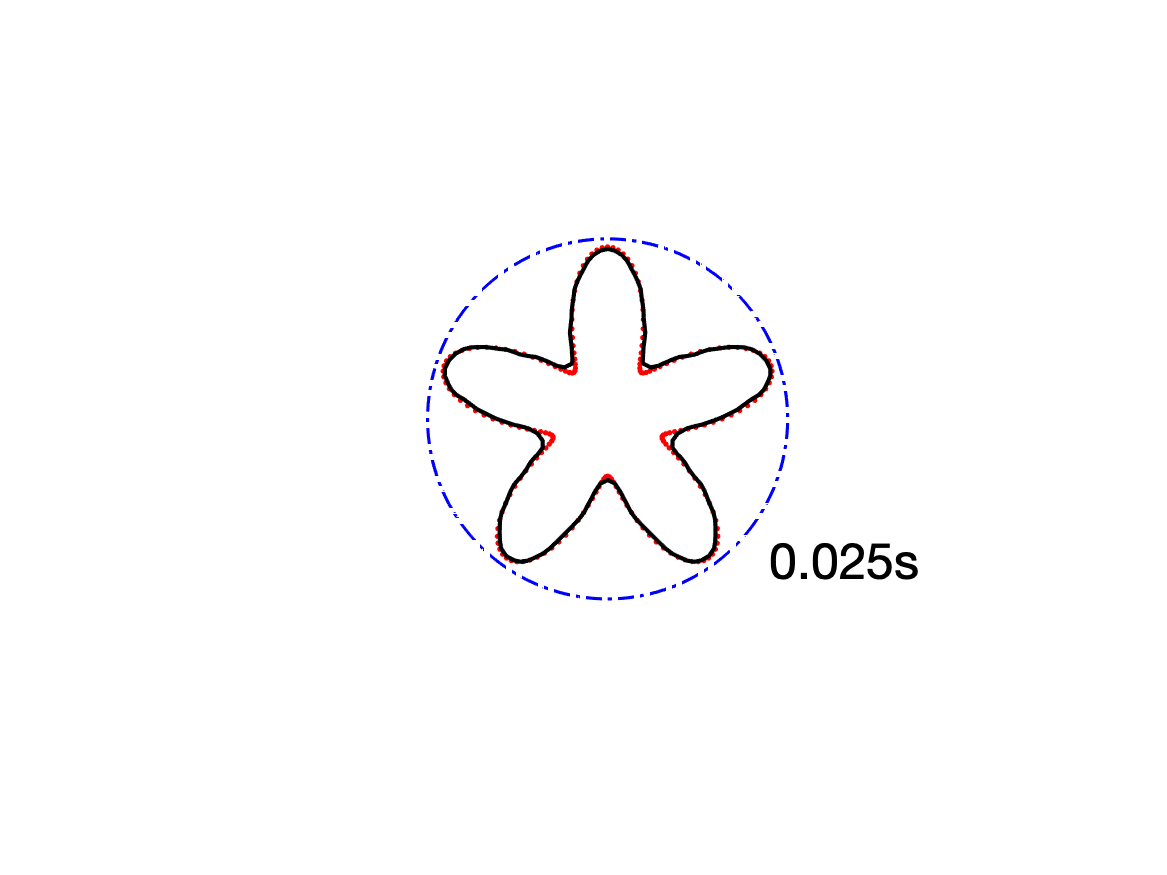}
  \caption{{\bf Left:} The result computed with Algorithm~\ref{a:MBO3}\_\ref{a:MBO} . {\bf Right:} The result computed with  Algorithm~\ref{a:MBO3}\_\ref{a:MBO2}. The $\tau_i$ used in both experiments are $0.02, 0.01, 0.005, 0.0025$, and  $0.00125$; respectively. See Section~\ref{sec:propertiescheck}.} \label{fig:ex1_adaptive}
\end{figure}

Figure~\ref{fig:ex1_comparison2} displays the 3-dimensional results obtained from Algorithms~\ref{a:MBO} and \ref{a:MBO2} with different choices of $\tau$. In all experiments, the initial guess is simply set to be the indicator function of a rectangular box:
\[\{(x,y,z) \colon |x|<1.6, \ |y|<1.6, \ \textrm{and}  \ |z|<0.6\}.\] The computational domain $[-\pi,\pi]^3$ is discretized with $128^3$ uniform grids.
It's easy to see that small $\tau$ makes both Algorithms stuck at some local minimizers of the discretized problem. However, smaller $\tau$ does give a closer surface to the point cloud. In Figure~\ref{fig:ex1_adaptive2}, we list the results obtained from Algorithm~\ref{a:MBO3}\_\ref{a:MBO} and \ref{a:MBO2}. Visually, the surfaces reconstructed from Algorithm~\ref{a:MBO3} are much better than the results in Figure~\ref{fig:ex1_comparison2}, respectively. From above two experiments, we observe that Algorithm~\ref{a:MBO3} is more accurate and robust than Algorithms~\ref{a:MBO} and \ref{a:MBO2}.

\begin{figure}[ht!]
\begin{center}
\begin{tabular}{c|c|c|c|c|c}
& $\tau = 0.02$ & $\tau = 0.018$ & $\tau = 0.016$ & $\tau = 0.01$ & $\tau = 0.005$   \\
 Algorithm~\ref{a:MBO}& $\includegraphics[width = 0.15\textwidth,clip, trim = 3.5cm 3cm 2cm 2.5cm]{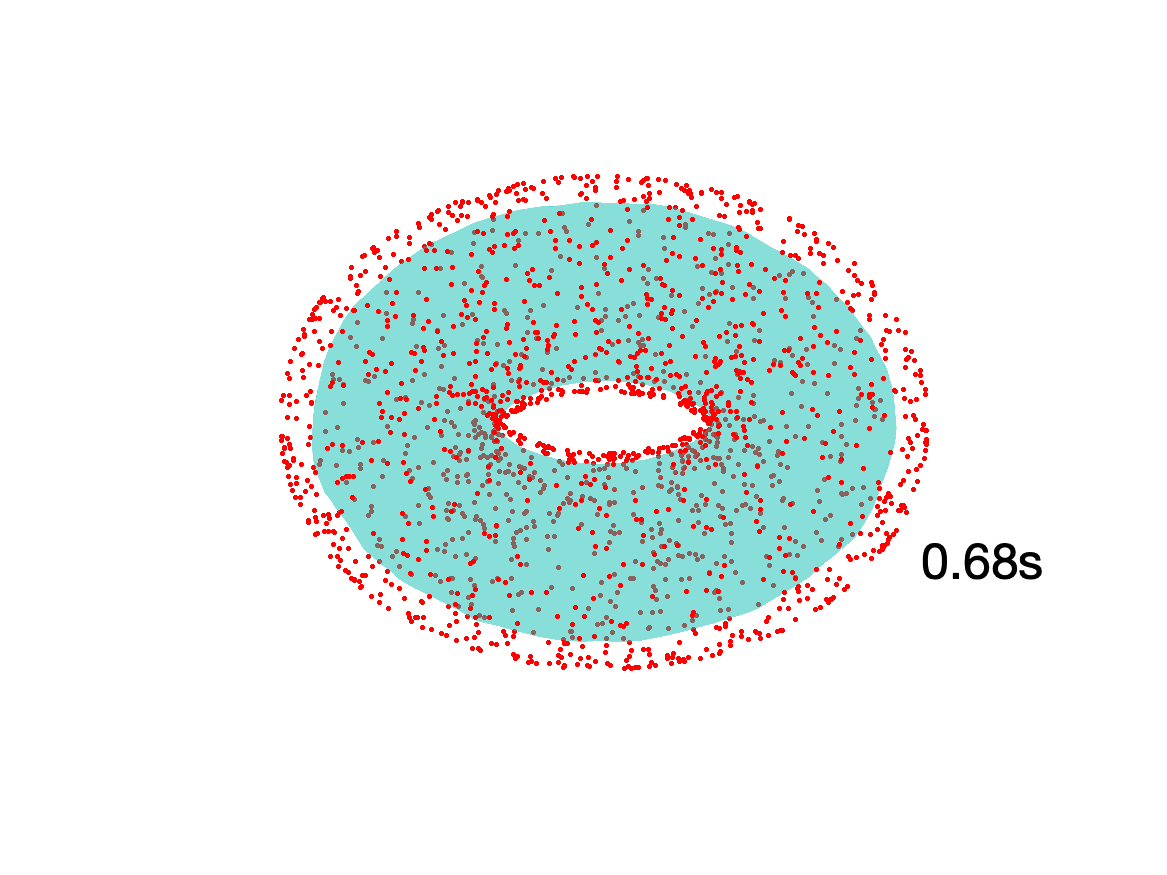}$ & $\includegraphics[width = 0.15\textwidth,clip, trim = 3.5cm 3cm 2cm 2.5cm]{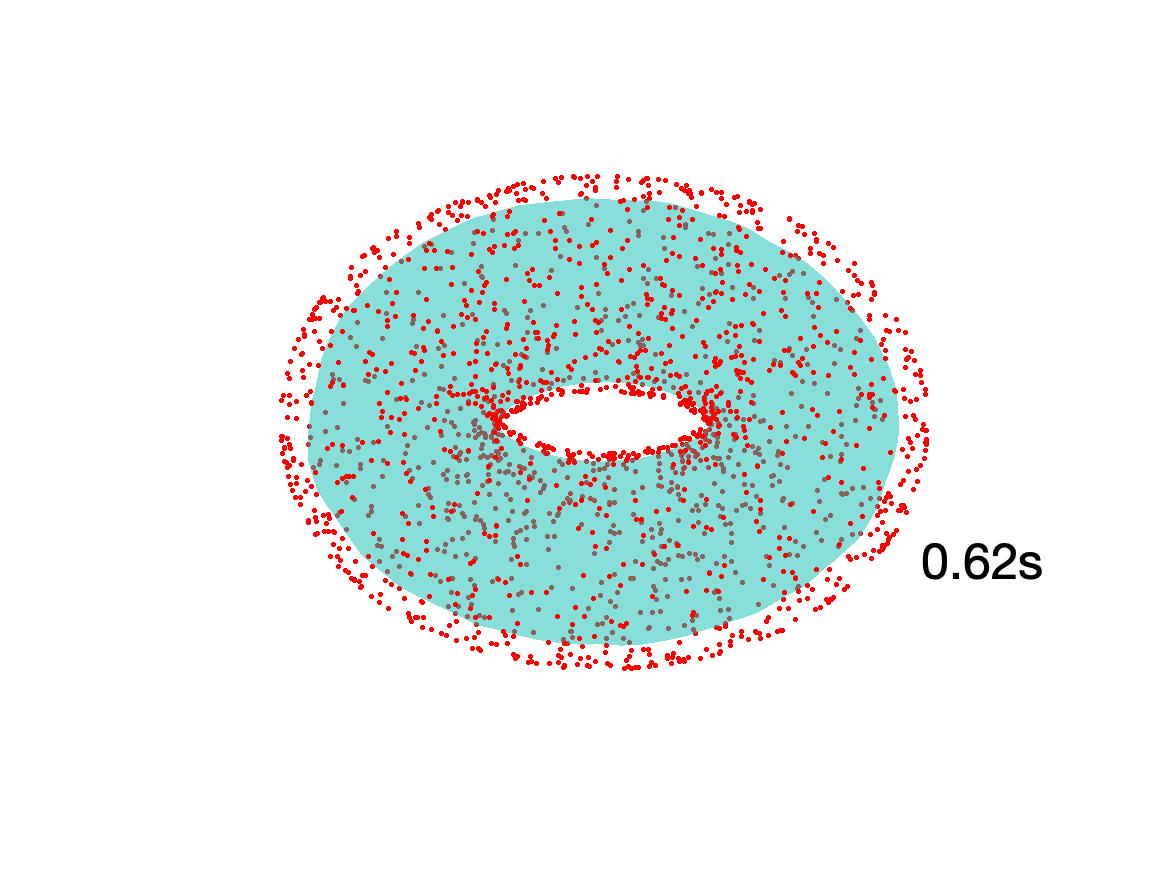}$ & $\includegraphics[width = 0.15\textwidth,clip, trim = 3.5cm 3cm 2cm 2.5cm ]{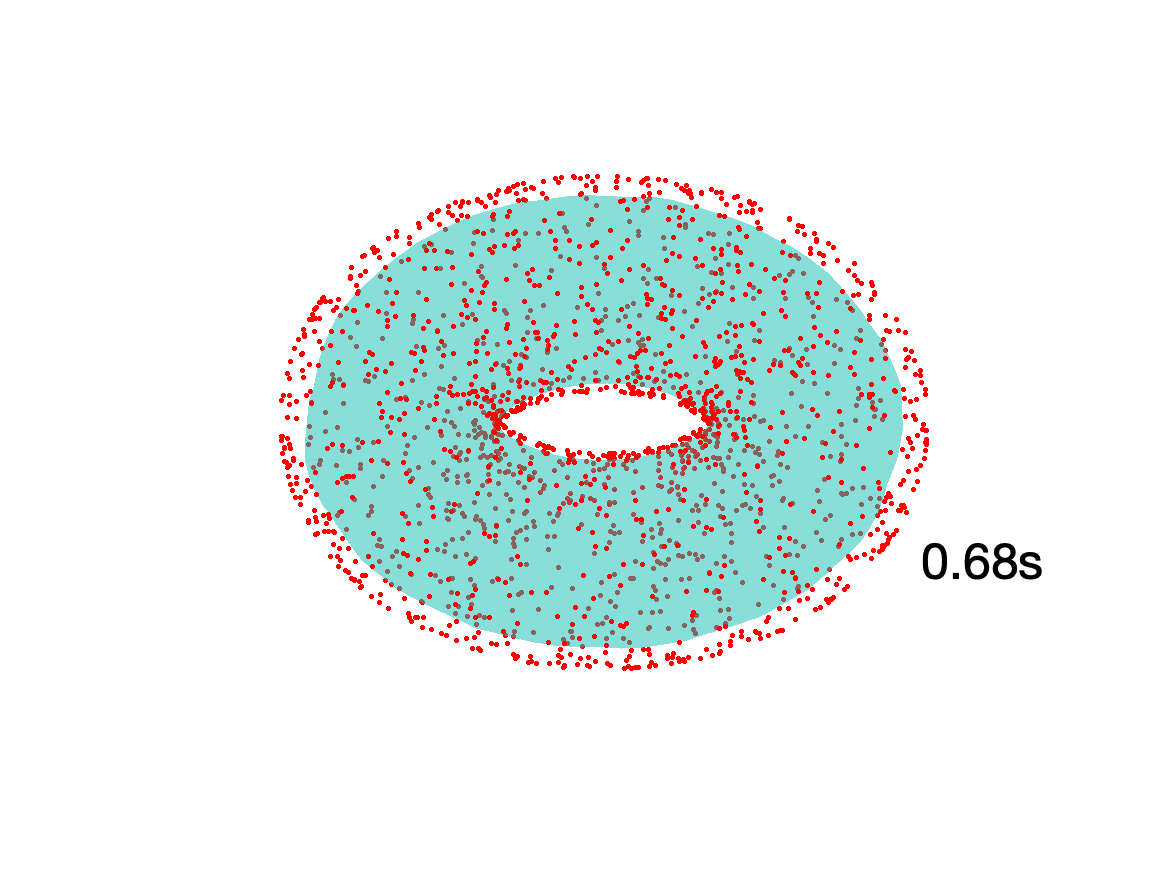}$ & $\includegraphics[width = 0.15\textwidth,clip, trim = 3.5cm 3cm 2cm 2.5cm]{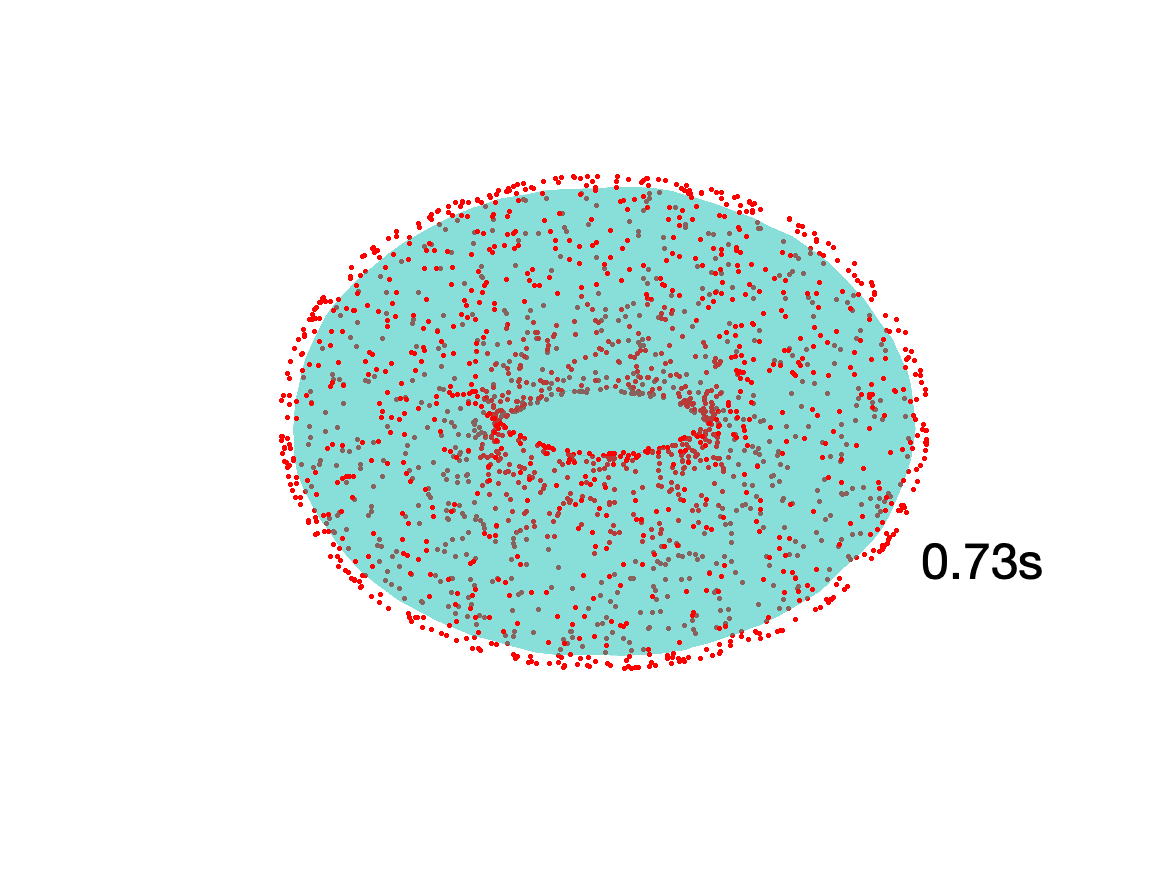}$ & $\includegraphics[width = 0.15\textwidth,clip, trim = 3.5cm 3cm 2cm 2.5cm]{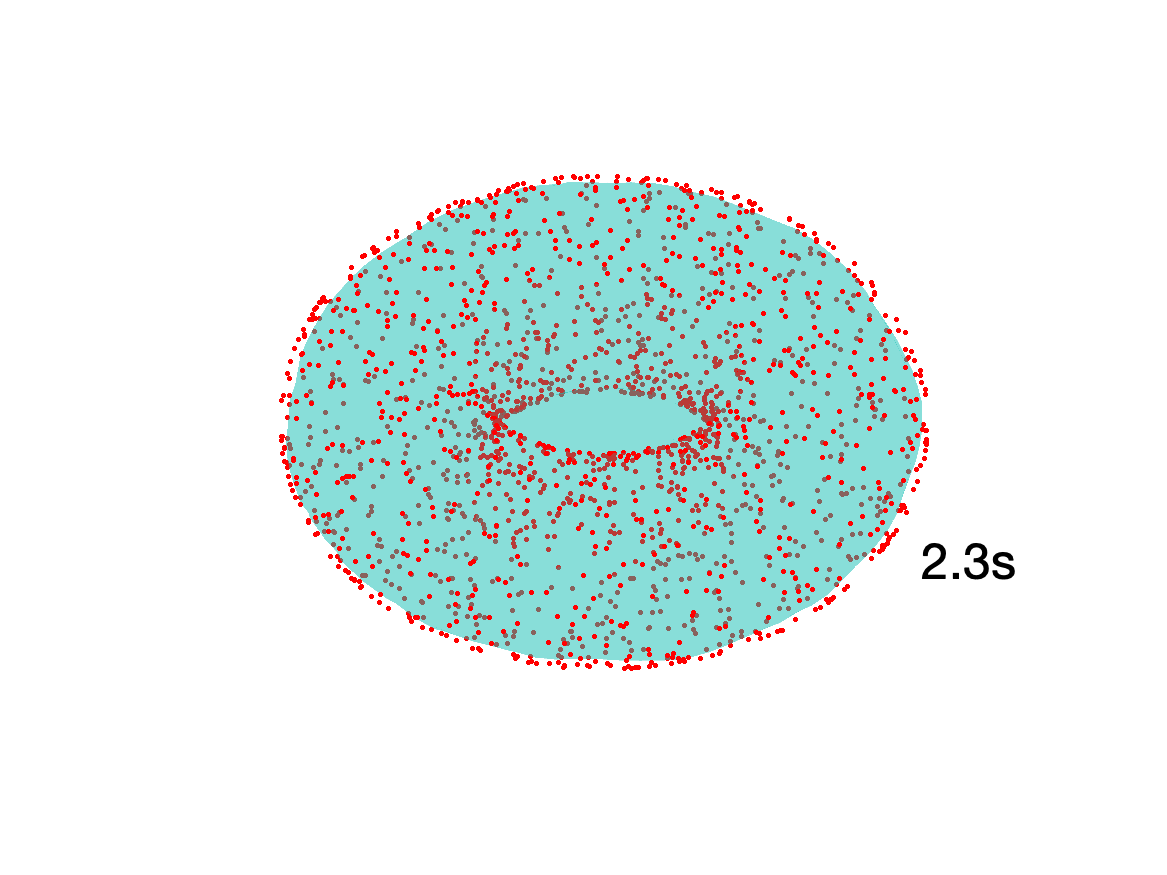}$ \\
Algorithm~\ref{a:MBO2}& $\includegraphics[width = 0.15\textwidth,clip, trim = 3.5cm 3cm 2cm 2.5cm]{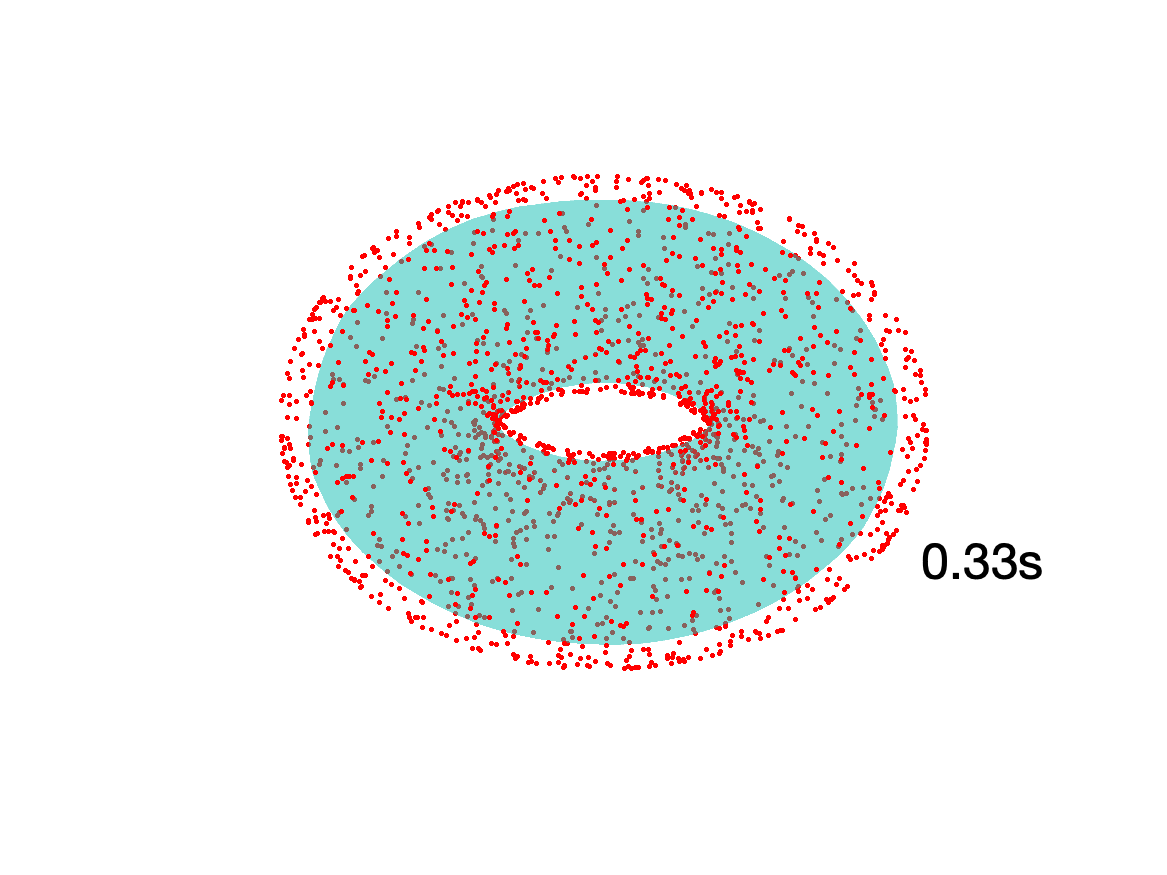}$ &  $\includegraphics[width = 0.15\textwidth,clip, trim = 3.5cm 3cm 2cm 2.5cm]{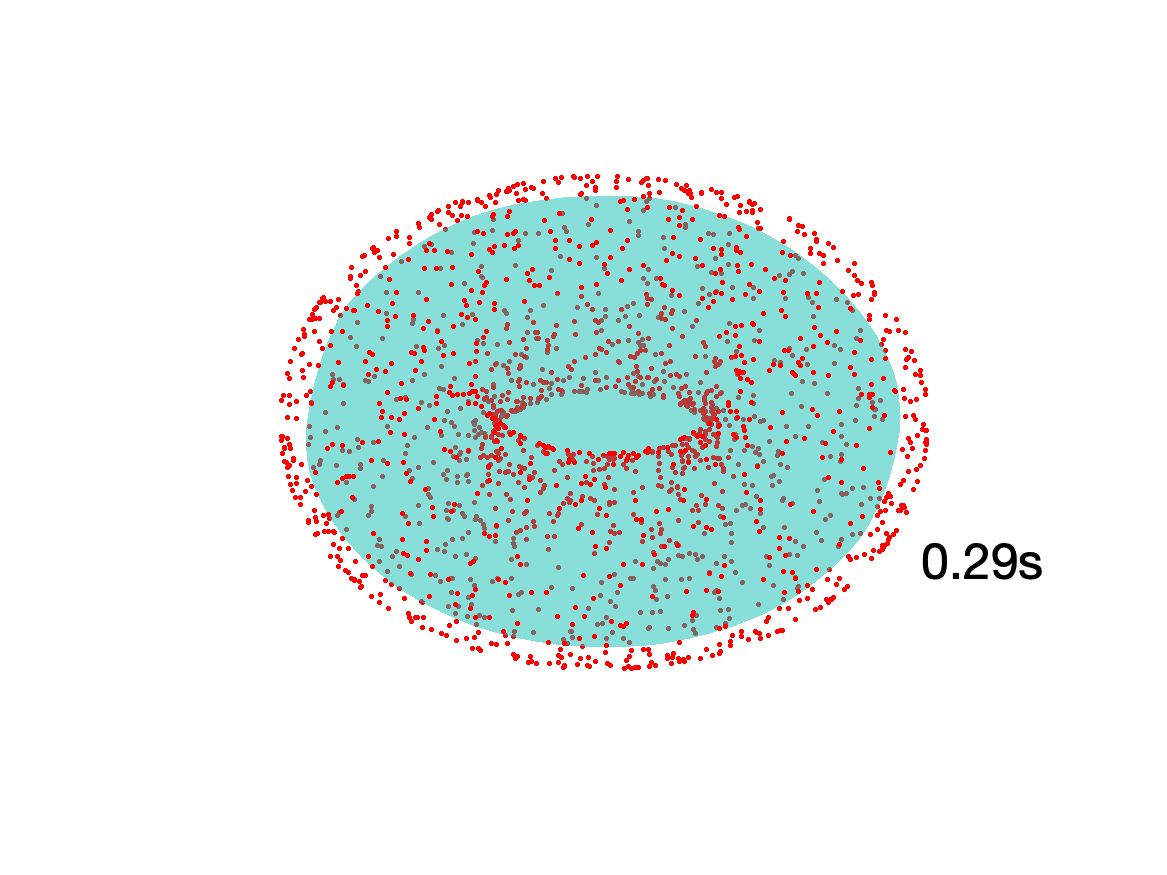}$  & $\includegraphics[width = 0.15\textwidth,clip, trim = 3.5cm 3cm 2cm 2.5cm]{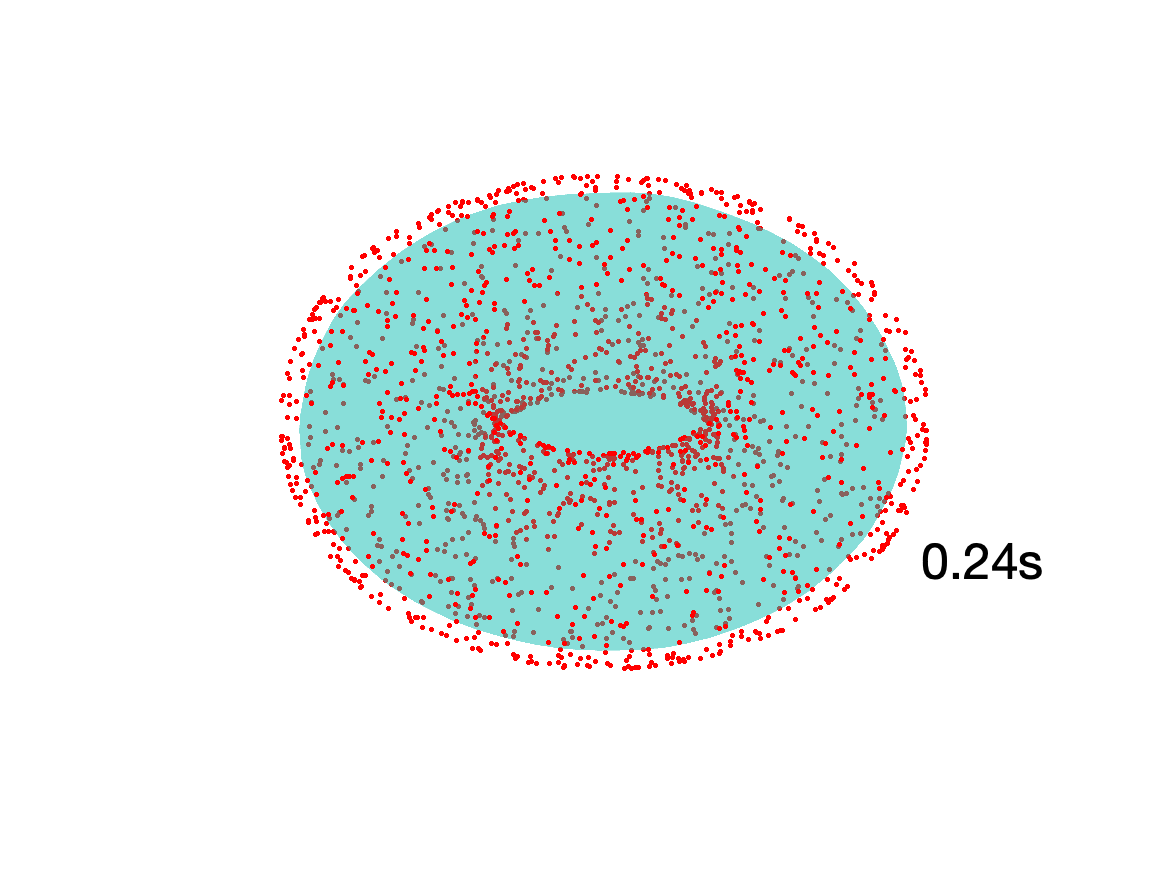}$ & $\includegraphics[width = 0.15\textwidth,clip, trim = 3.5cm 3cm 2cm 2.5cm ]{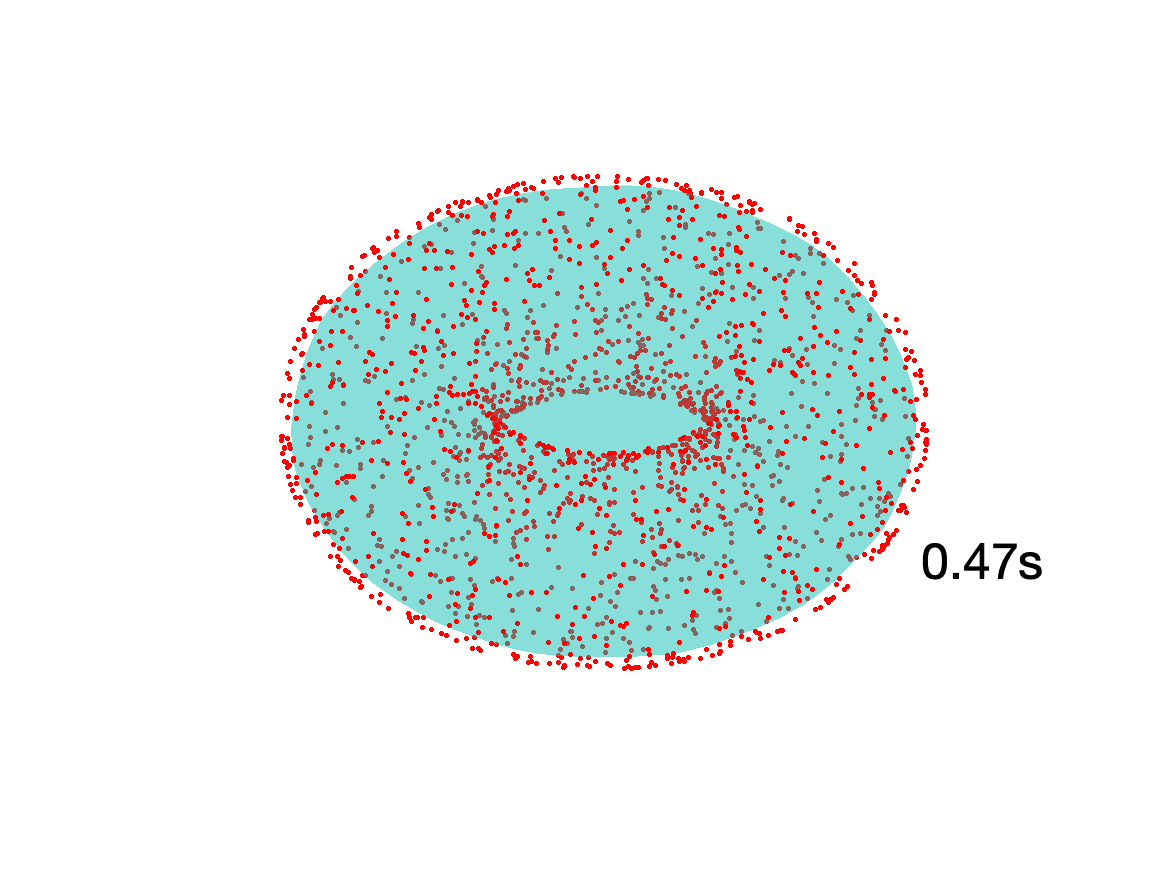}$  & $\includegraphics[width = 0.15\textwidth,clip, trim = 3.5cm 3cm 2cm 2.5cm ]{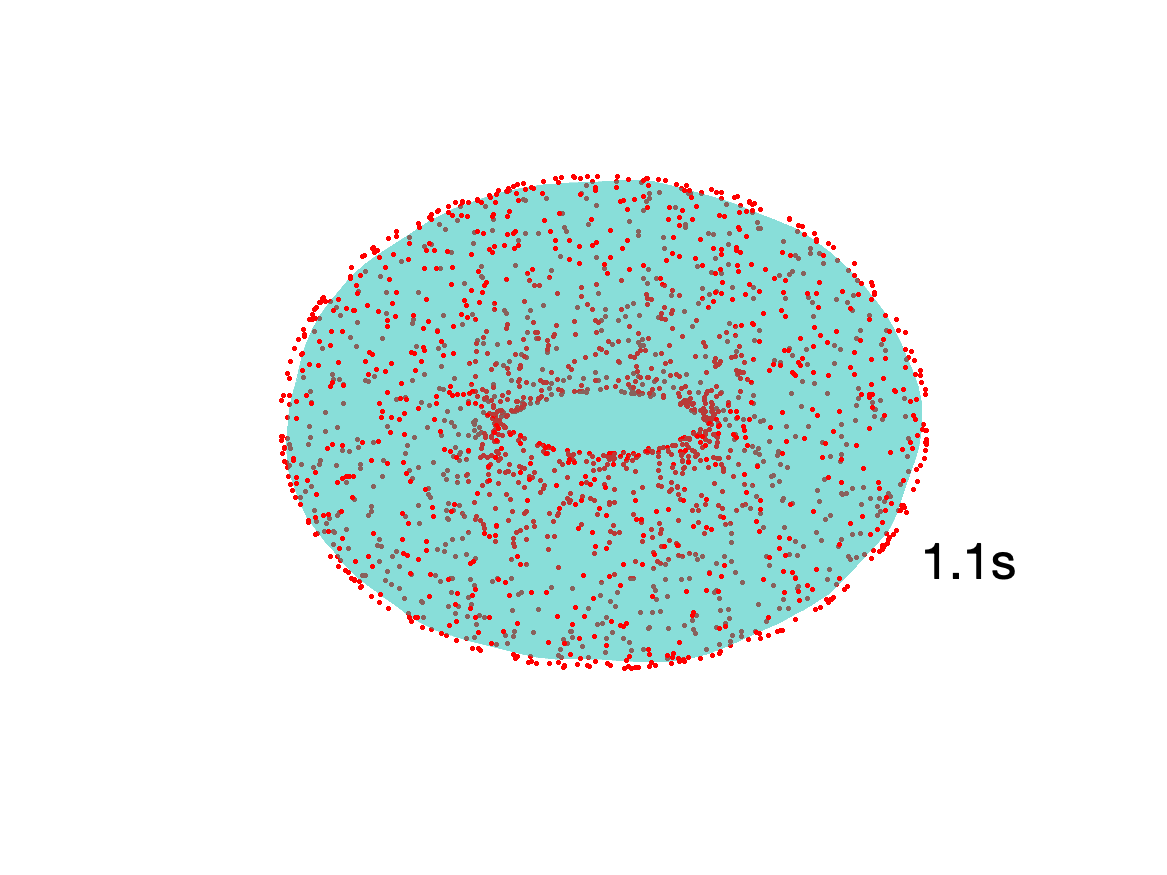}$ 
\end{tabular}
\end{center}
\caption{{\bf 1st row:} Computed results from Algorithm~\ref{a:MBO} with different $\tau$. {\bf 2nd row:} Computed results from Algorithm~\ref{a:MBO2} with different $\tau$. See Section~\ref{sec:propertiescheck}.} \label{fig:ex1_comparison2}
\end{figure}

\begin{figure}[ht!]
\centering
 \includegraphics[width = 0.3\textwidth,clip, trim = 3.5cm 3cm 2cm 2.5cm ]{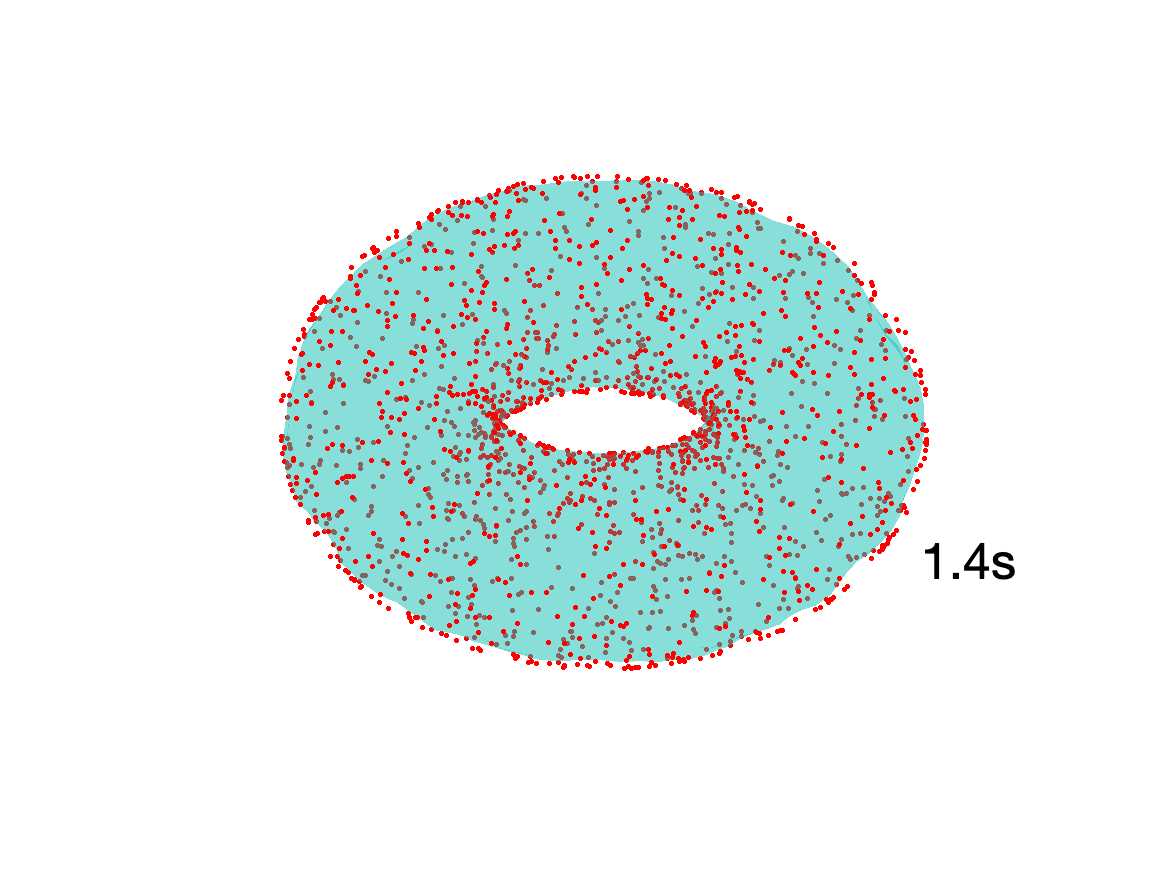} \qquad \qquad  
  \includegraphics[width = 0.3\textwidth,clip, trim = 3.5cm 3cm 2cm 2.5cm ]{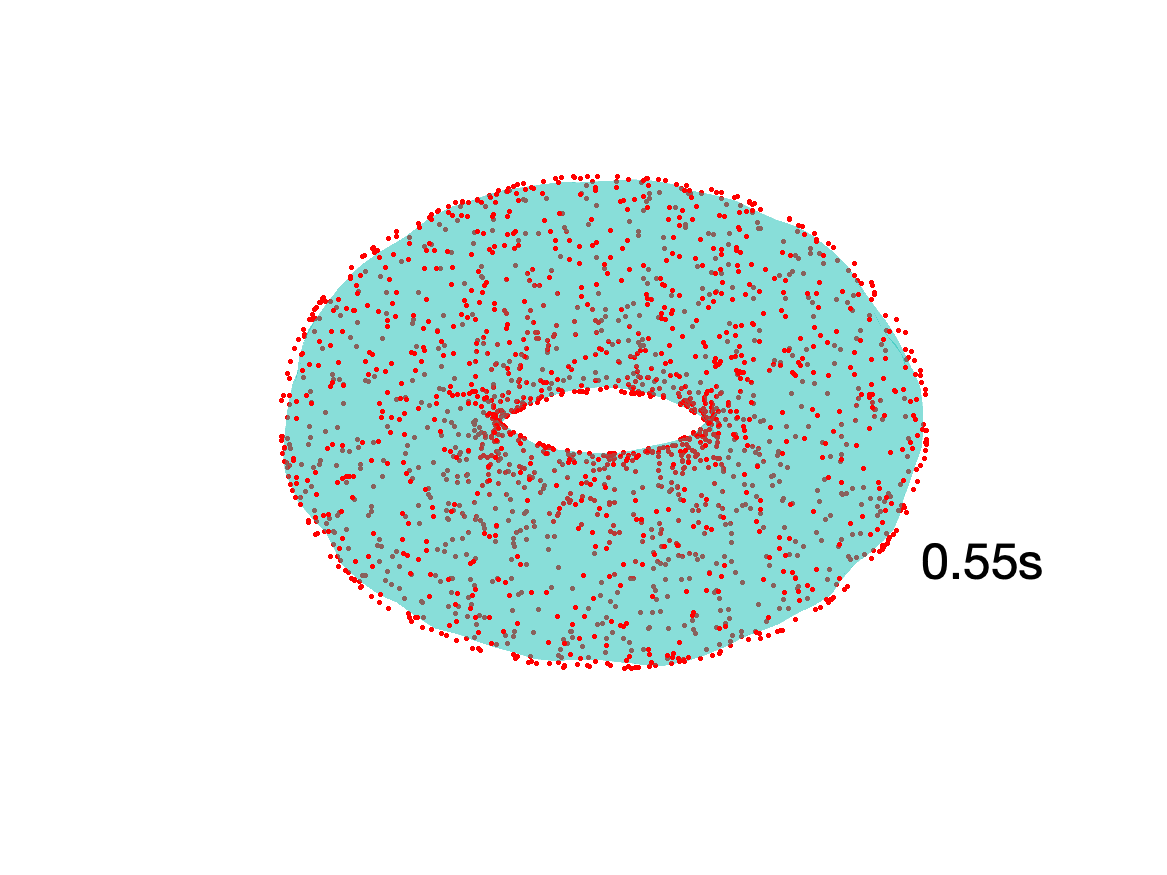}
  \caption{{\bf Left:} The result computed with Algorithm~\ref{a:MBO3}\_\ref{a:MBO} . {\bf Right:} The result computed with  Algorithm~\ref{a:MBO3}\_\ref{a:MBO2}. The $\tau_i$ ($i = 1,2,3,4$) used in both experiments are $0.02, 0.01, 0.005$, and $0.0025$; respectively. See Section~\ref{sec:propertiescheck}.} \label{fig:ex1_adaptive2}
\end{figure}

\subsubsection{The energy decaying property.}\label{sec:energy}
In this section, we show the energy decaying property (especially on Algorithm~\ref{a:MBO3}) via a 2-dimensional three-fold example. The point cloud is generated using $N = 100$ uniform points $\theta_i$ in $[0,2\pi]$:
\[\begin{cases} 
x_i = r_i\cos(\theta_i),\\
y_i = r_i\sin(\theta_i)
\end{cases}\]
where $r_i = 1+0.5\cos(3(\theta-\pi/2))$.

Figure~\ref{fig:energy_decaying_1} displays the energy decaying curves of Algorithms~\ref{a:MBO} and \ref{a:MBO2}.  Even we only have the theoretical proof of the energy decaying property for Algorithm~\ref{a:MBO2}, numerical experiments indicate that Algorithm~\ref{a:MBO} also has the energy decaying property. In both figures, for different choices of $\tau$, the curves have similar profiles with only sketching (or compressing) along the $x$-axis (number of iterations). This is also consistent with the fact that $\tau$ plays the role of time step in the algorithm for the dynamics of the contour, as we discussed in Section~\ref{sec:connection}.

\begin{figure}[ht!]
\centering
 \includegraphics[width = 0.4\textwidth,clip, trim = 0cm 0cm 1cm 1cm ]{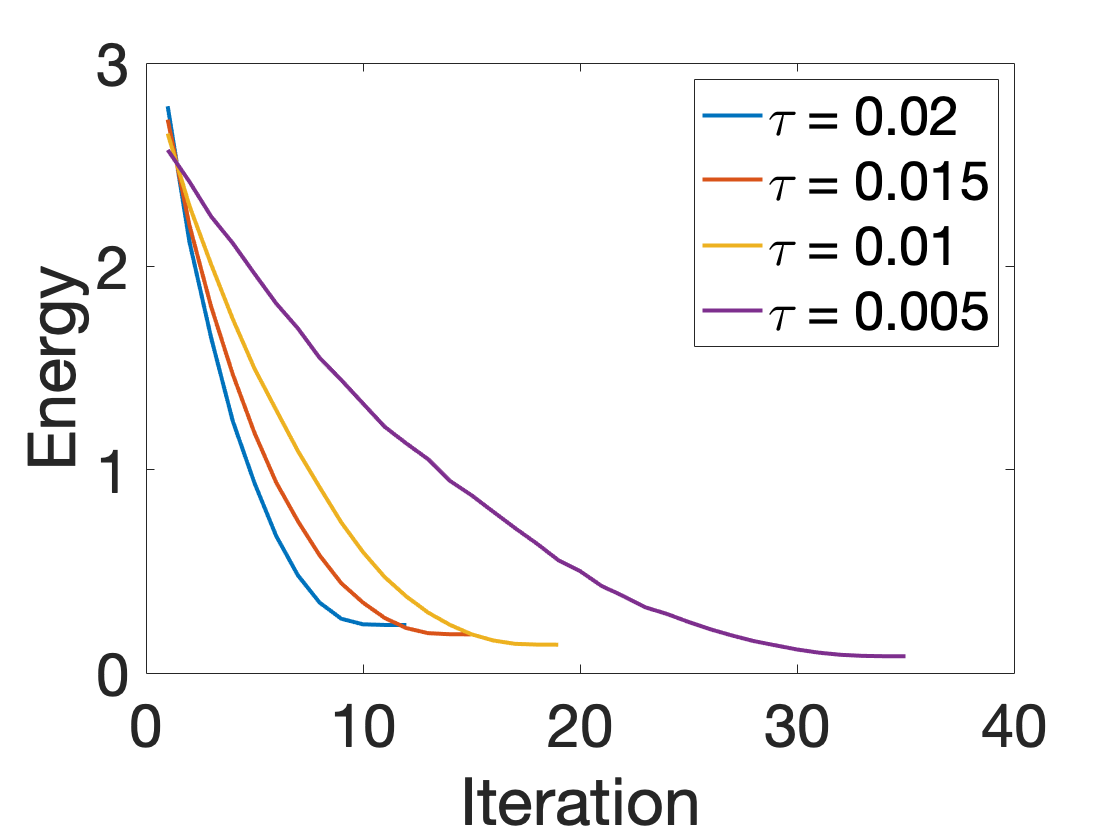} \qquad 
 \includegraphics[width = 0.4\textwidth,clip, trim = 0cm 0cm 1cm 1cm ]{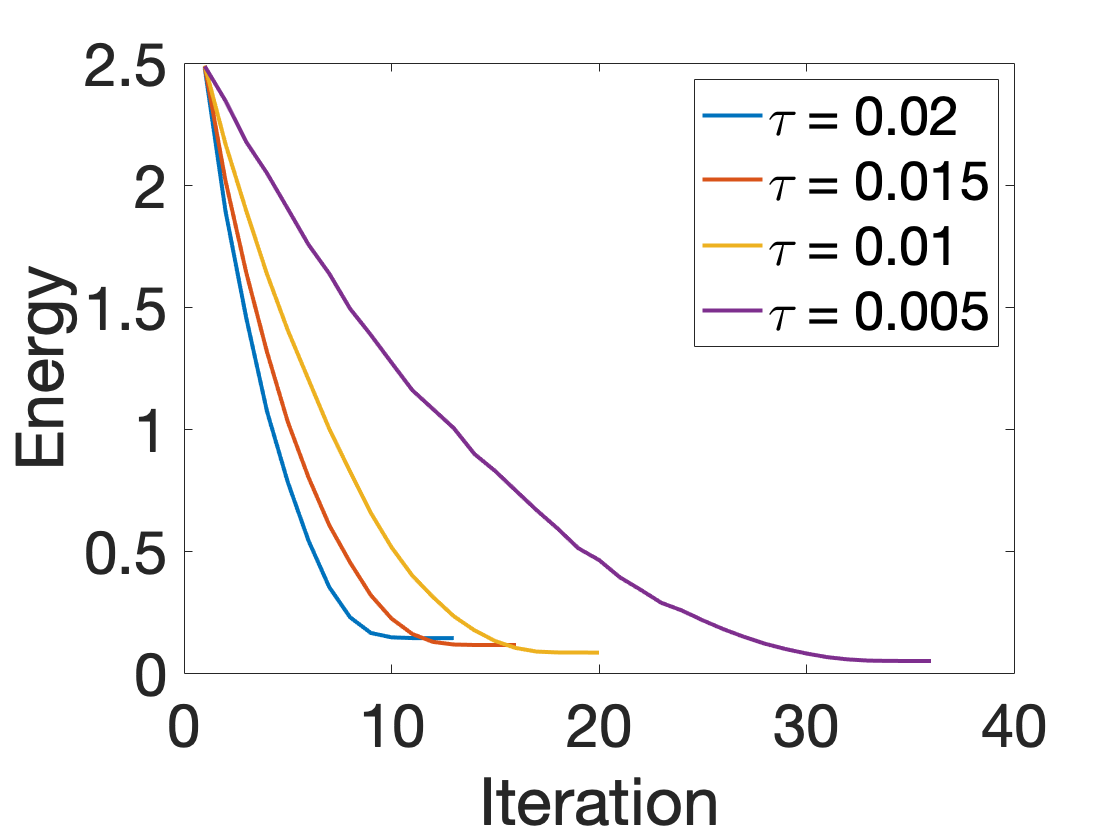}
 \caption{Energy decaying curves for Algorithms~\ref{a:MBO} and \ref{a:MBO2} ({\bf left:} Algorithm~\ref{a:MBO}, {\bf right:} Algorithm~\ref{a:MBO2}). See Section~\ref{sec:energy}.}\label{fig:energy_decaying_1}
\end{figure}

Figure~\ref{fig:energy_decaying_2} displays energy decaying curves during iterations for Algorithm~\ref{a:MBO3}\_\ref{a:MBO} and \ref{a:MBO3}\_\ref{a:MBO2}. In both figures, relatively big jumps occur at the iteration when $\tau$ is changed. Several snapshots at the jumps are shown in both figures. From Figure~\ref{fig:energy_decaying_2}, we observe that the initially choice of $\tau$ (relatively large) can quickly give a solution in the regime of the local minimizer and smaller $\tau$ refines the solution.

\begin{figure}[ht!]
\centering
 \includegraphics[width = 0.8\textwidth,clip, trim = 0cm 0cm 0cm 0cm ]{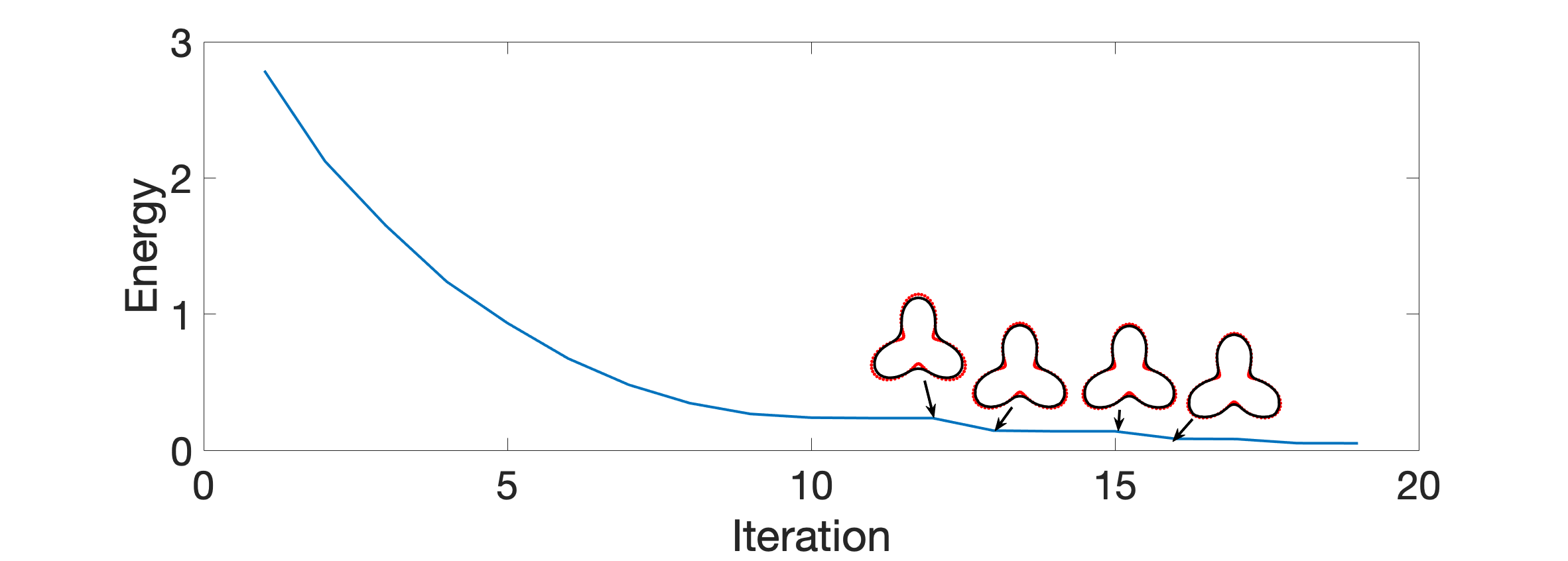} 
 
 \smallskip
 
 \includegraphics[width = 0.8\textwidth,clip, trim = 0cm 0cm 0cm 0cm ]{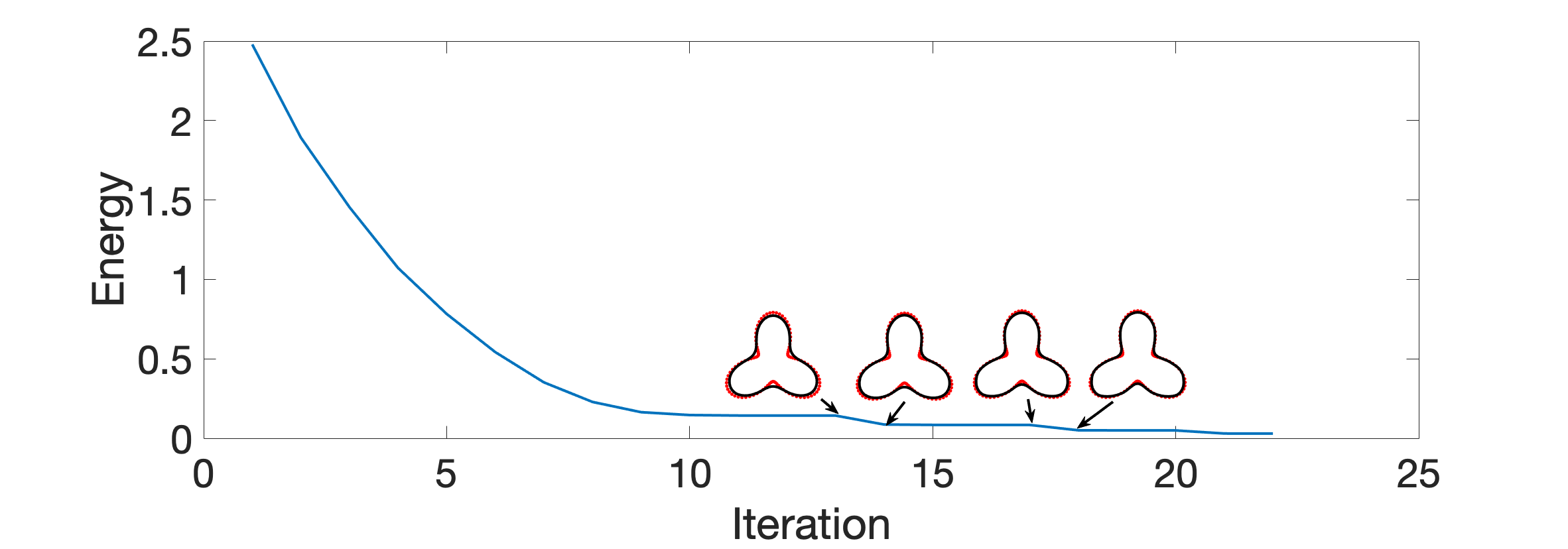}
 \caption{Energy decaying curves for Algorithms~\ref{a:MBO3}\_\ref{a:MBO} and \ref{a:MBO3}\_\ref{a:MBO2} ({\bf top:} Algorithm~\ref{a:MBO3}\_\ref{a:MBO}, {\bf bottom:} Algorithm~\ref{a:MBO3}\_\ref{a:MBO2}). See Section~\ref{sec:energy}.}\label{fig:energy_decaying_2}
\end{figure}

\subsubsection{Sensitivity to $p$.} \label{sec:p}
In this experiment, we check the dependency of the results on the value of $p$. In Figure~\ref{fig:p}, we list the results obtained from Algorithm~\ref{a:MBO3}\_\ref{a:MBO2} for different choices of $p$.  The values of $p$ used in Figure~\ref{fig:p} are $1, 2, 3, 4$, and $5$ from the left to the right, respectively. We observe that for $p\geq2$, the results show little difference. However, the result obtained from Algorithm~\ref{a:MBO3}\_\ref{a:MBO2} using $p=1$ deviates a little from the correct curve. Actually, we did a lot similar experiments for different types of point clouds with different values of $p$, the behavior of the solutions are similar as those in Figure~\ref{fig:p}.  In other words, $p=1$ gives worse results from our numerical observation. This is reasonable because $|d|^{p/2}$ is not Lipschitz on the front when $p=1$.

\begin{figure}[ht!]
\centering
 \includegraphics[width = 0.18\textwidth,clip, trim = 6cm 4.5cm 4cm 3.5cm ]{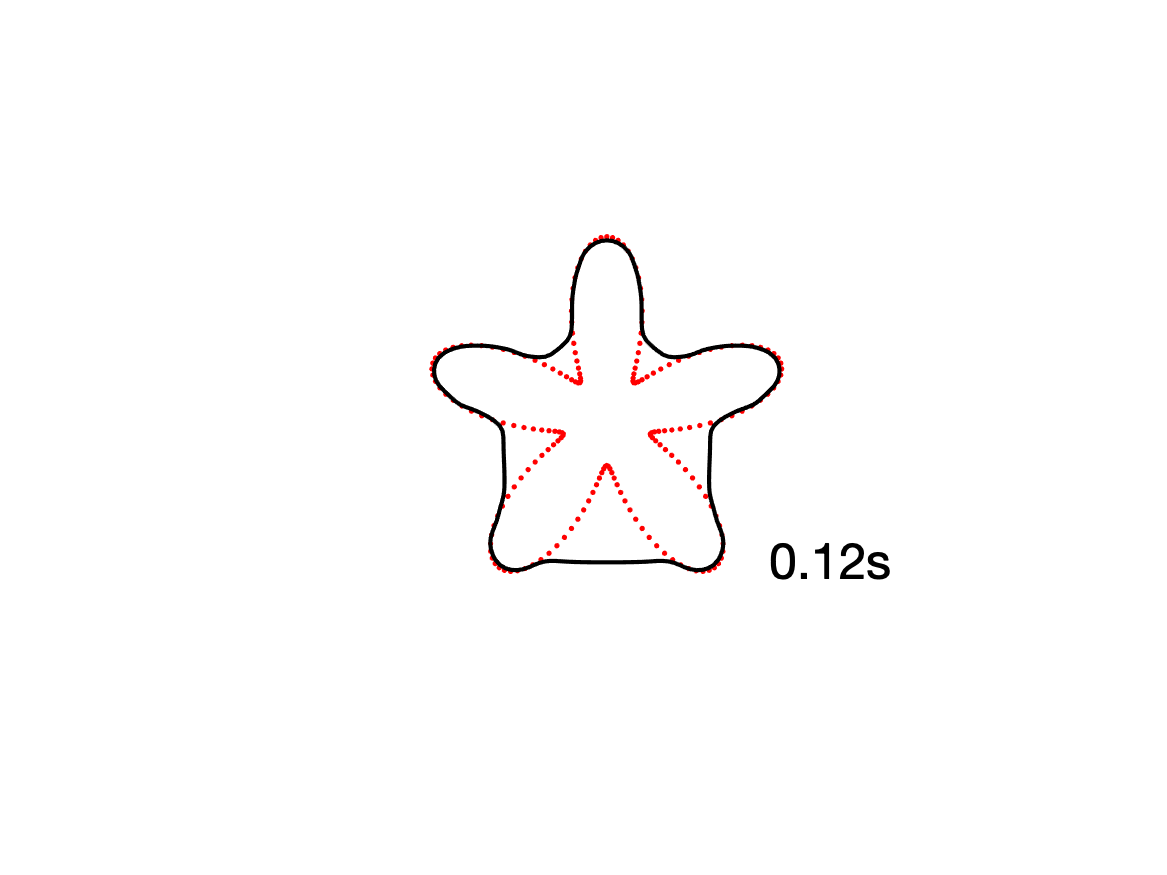}
 \includegraphics[width = 0.18\textwidth,clip, trim =  6cm 4.5cm 4cm 3.5cm  ]{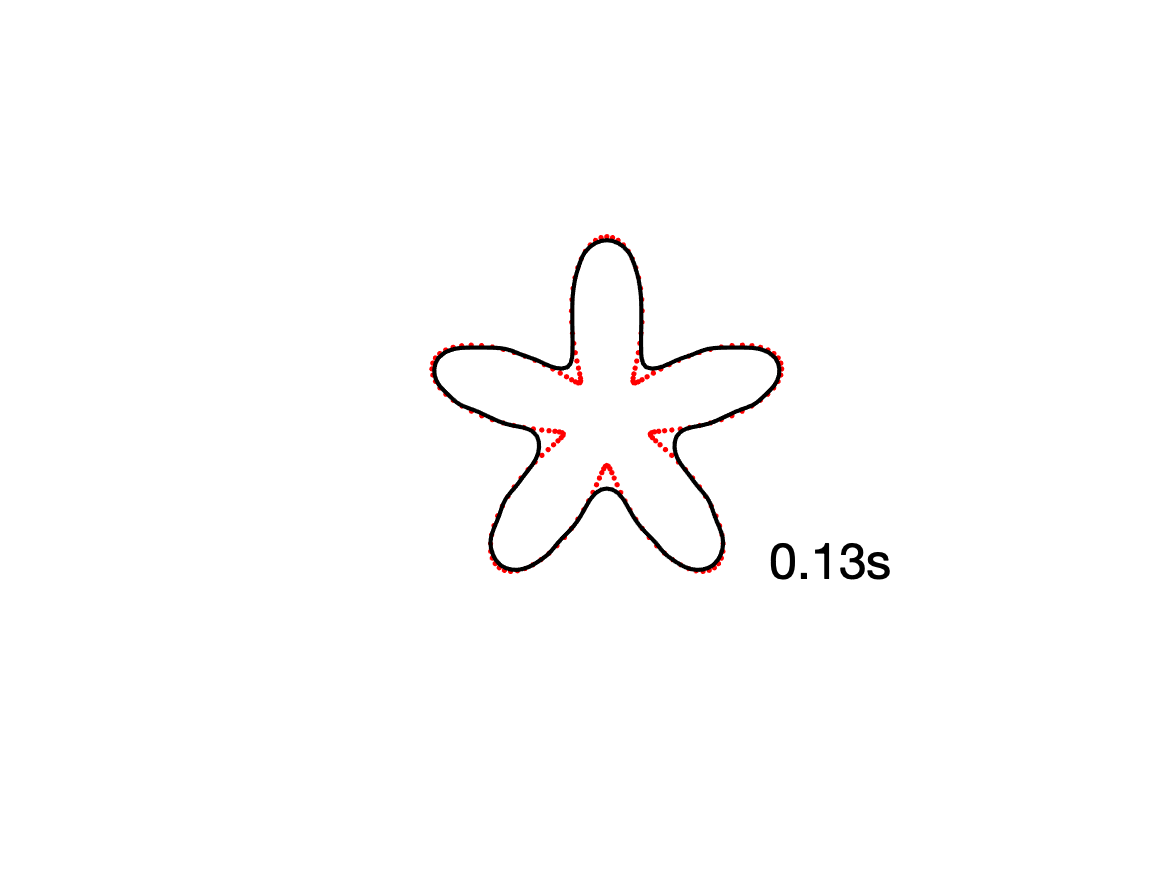}
 \includegraphics[width = 0.18\textwidth,clip, trim =  6cm 4.5cm 4cm 3.5cm  ]{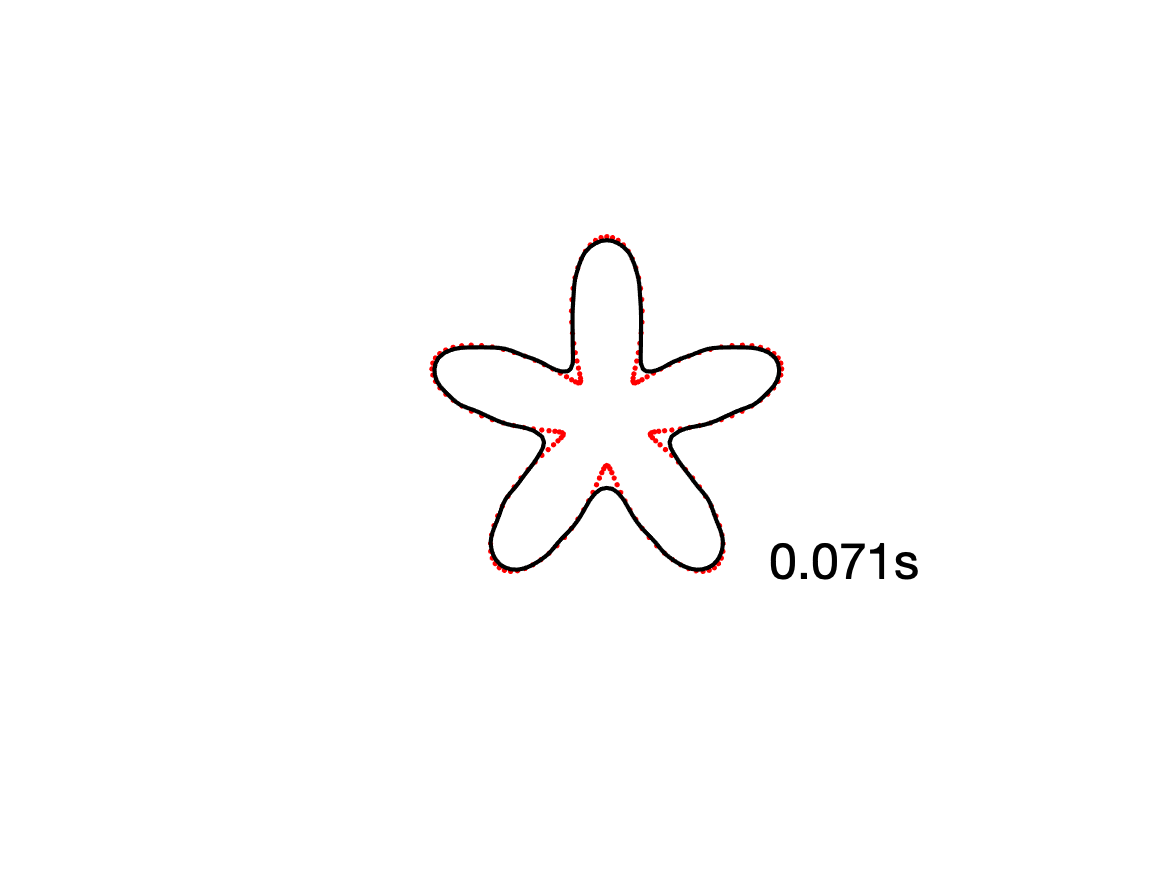}
 \includegraphics[width = 0.18\textwidth,clip, trim =  6cm 4.5cm 4cm 3.5cm  ]{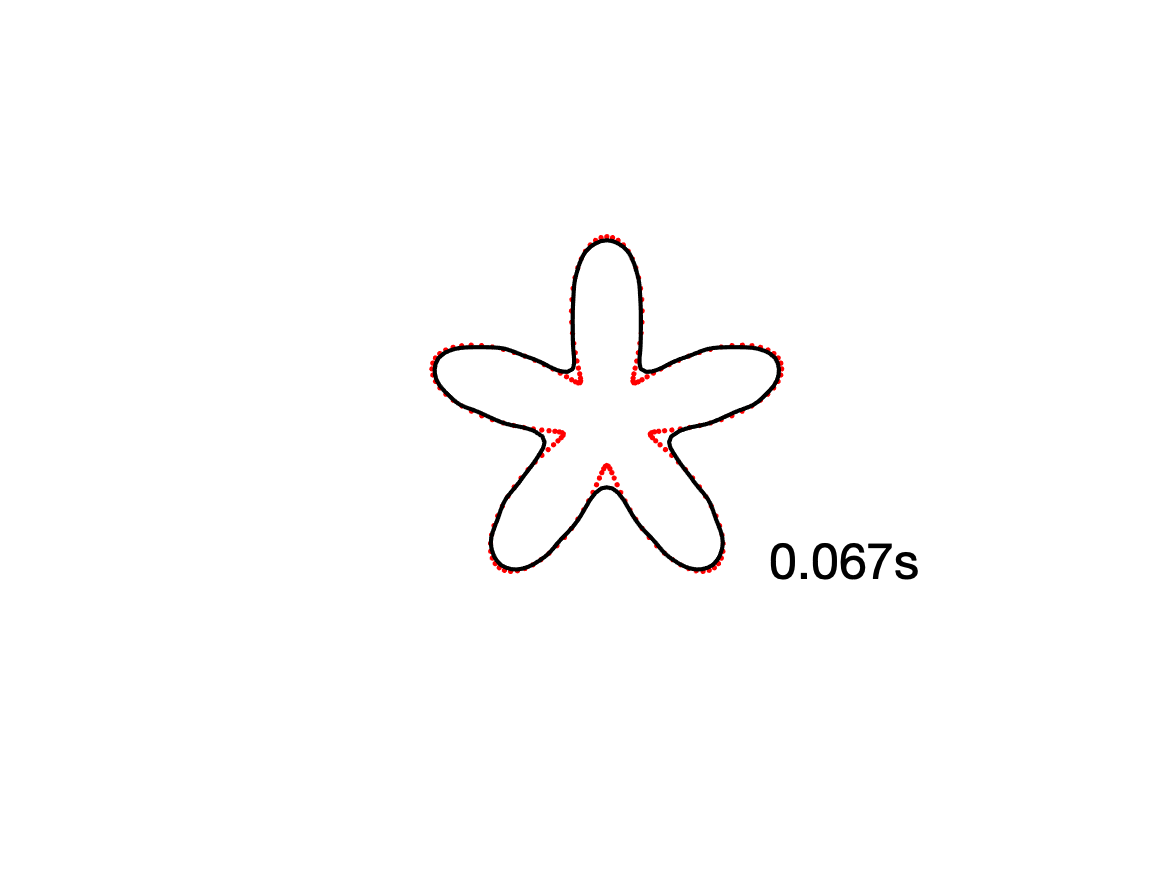}
 \includegraphics[width = 0.18\textwidth,clip, trim =  6cm 4.5cm 4cm 3.5cm ]{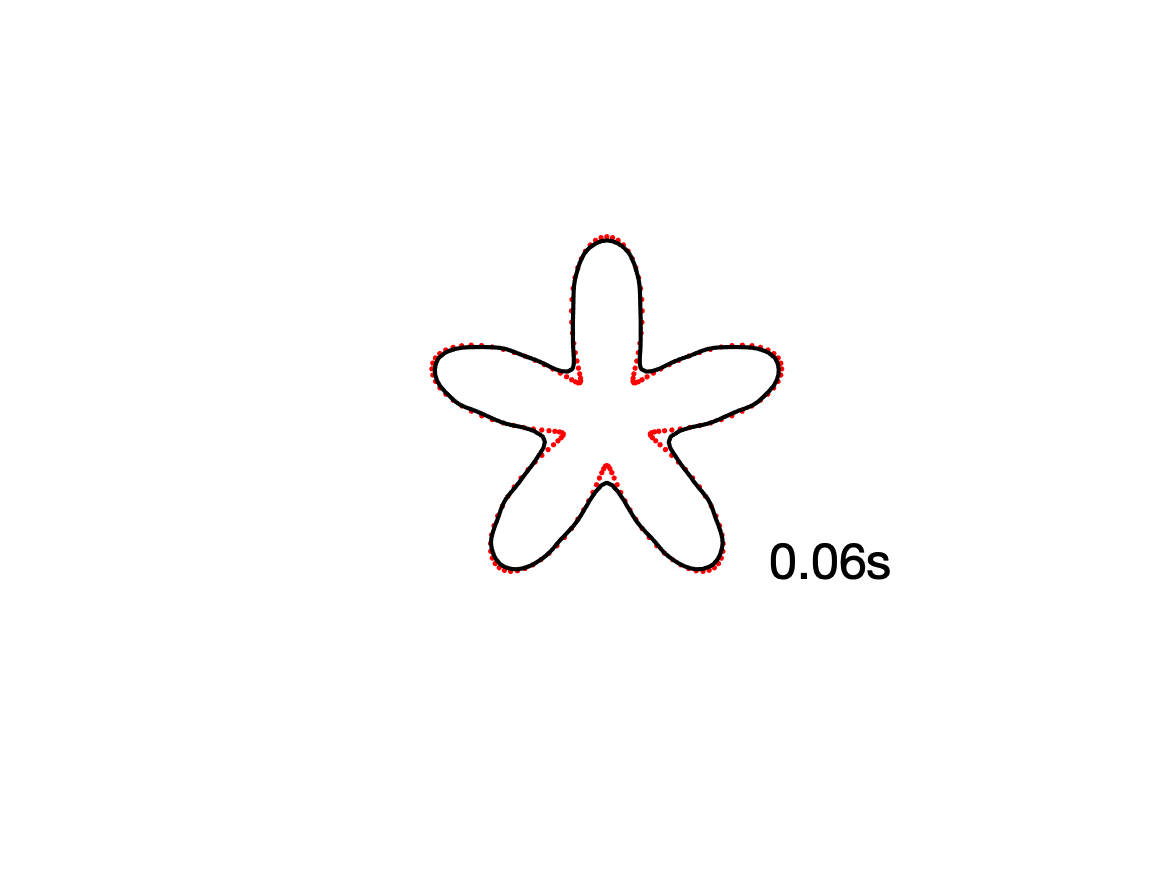} \\
 p = 1  \qquad \qquad \qquad  p = 2  \qquad \qquad \qquad  p = 3  \qquad \qquad \qquad  p = 4  \qquad \qquad \qquad p = 5
 \caption{Results obtained from Algorithm~\ref{a:MBO3}\_\ref{a:MBO2} using different values of $p$. {\bf Left to right:} $p = 1, 2, 3, 4$, and $5$. See Section~\ref{sec:p}.}\label{fig:p}
\end{figure}

\subsubsection{Sensitivity to noisy data.} \label{sec:noise}
In this experiment, we perform several experiments to show the capability of proposed algorithms in noisy data. Because Algorithm~\ref{a:MBO3} gives better results than Algorithms~\ref{a:MBO} and \ref{a:MBO2}, in the following and subsequent experiments, we only show results from Algorithm~\ref{a:MBO3}\_\ref{a:MBO2}. 

We consider the noisy data $\tilde \bx$ generated from the pure data $\bx$: $\tilde \bx= \bx+\mu \nu$, where $\nu$ is a vector whose entries are independently and identically distributed random variables from a normal distribution and $\mu$ is a parameter to control the intensity of noise. In Figure~\ref{fig:noise_1}, we observe that the method still works for noisy data, especially when the noise intensity is not very high. In addition, the two-circle case indicates that the algorithm works well for topological changing cases. In these experiments, the computational domain $[-\pi,\pi]^2$ is discretized by $128^2$ grids.

\begin{figure}[ht!]
\centering
 \includegraphics[width = 0.2\textwidth,clip, trim = 6cm 3cm 5cm 3cm]{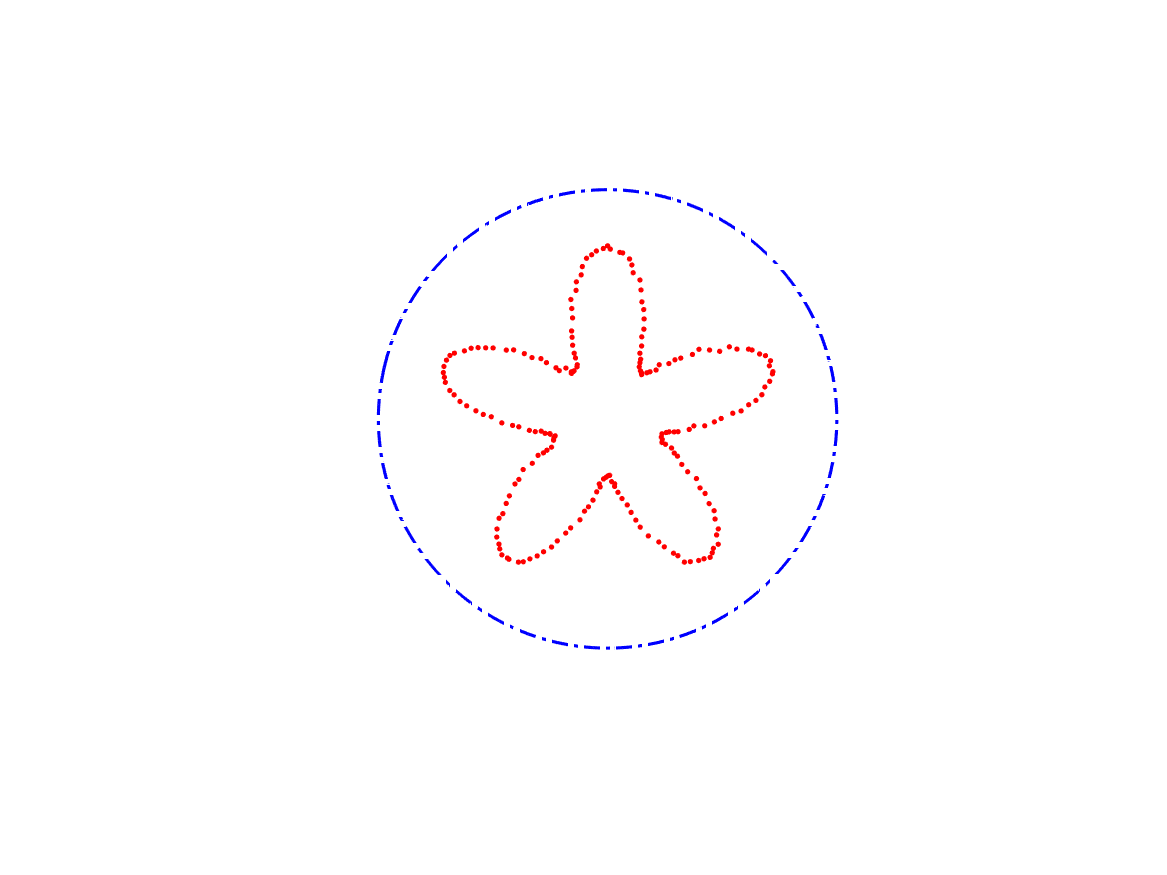} 
 \includegraphics[width = 0.2\textwidth,clip, trim = 6cm 3cm 5cm 3cm]{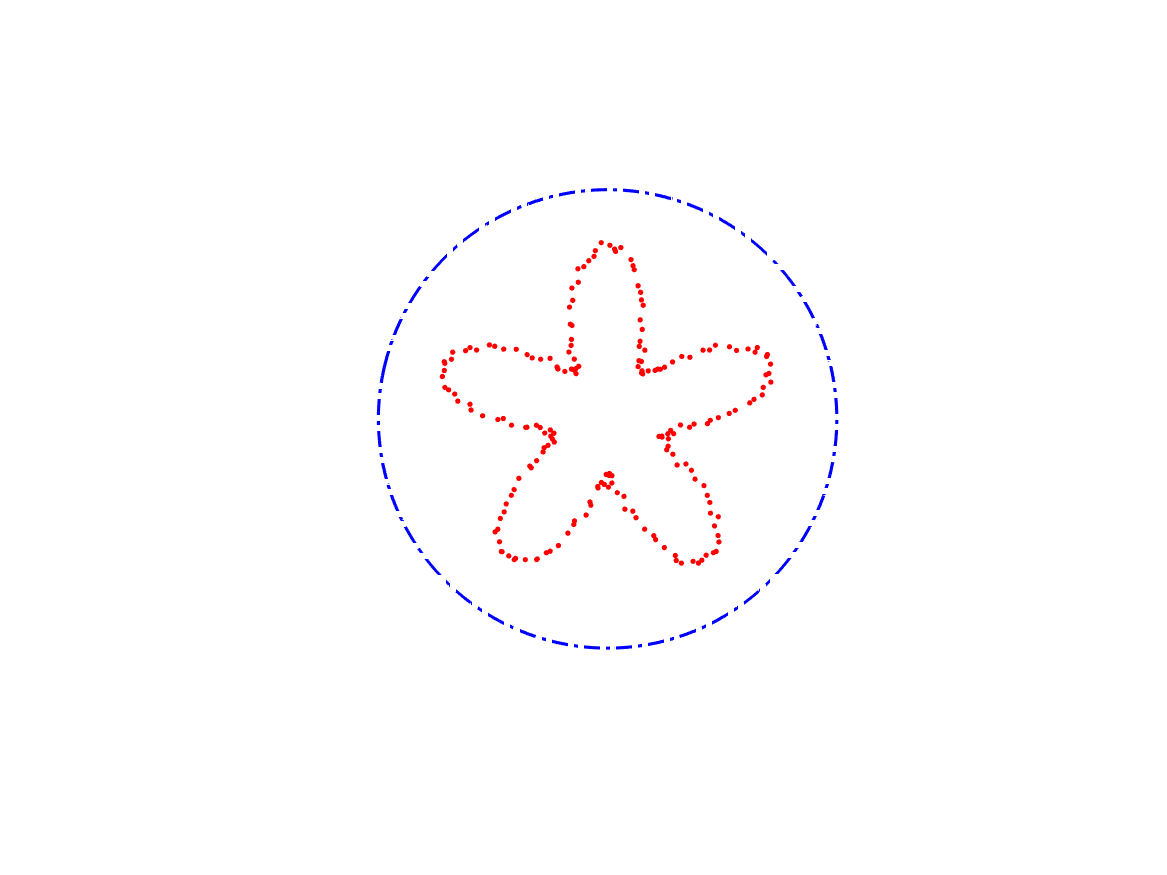} 
 \includegraphics[width = 0.2\textwidth,clip, trim = 6cm 3cm 5cm 3cm]{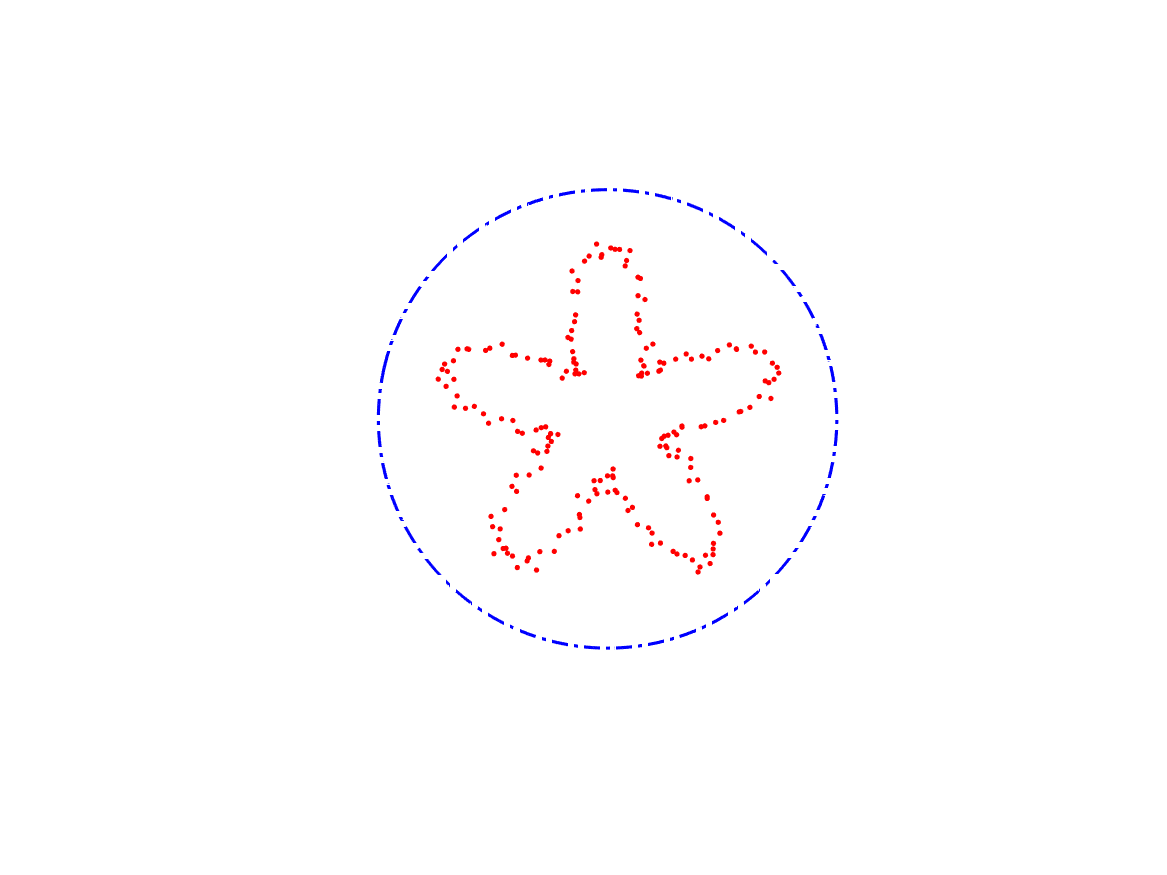} 
 \includegraphics[width = 0.2\textwidth,clip, trim = 6cm 3cm 5cm 3cm]{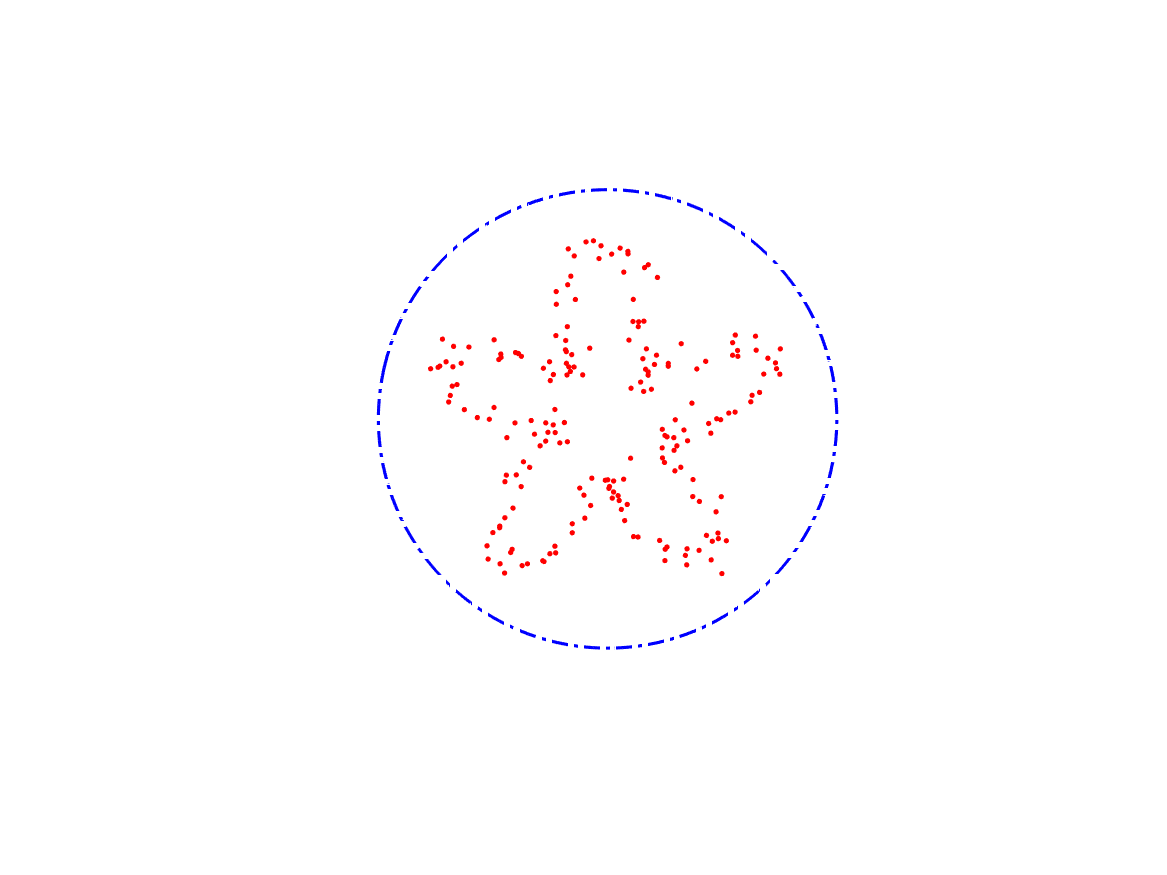} 
 \includegraphics[width = 0.2\textwidth,clip, trim = 6cm 3cm 4cm 3cm]{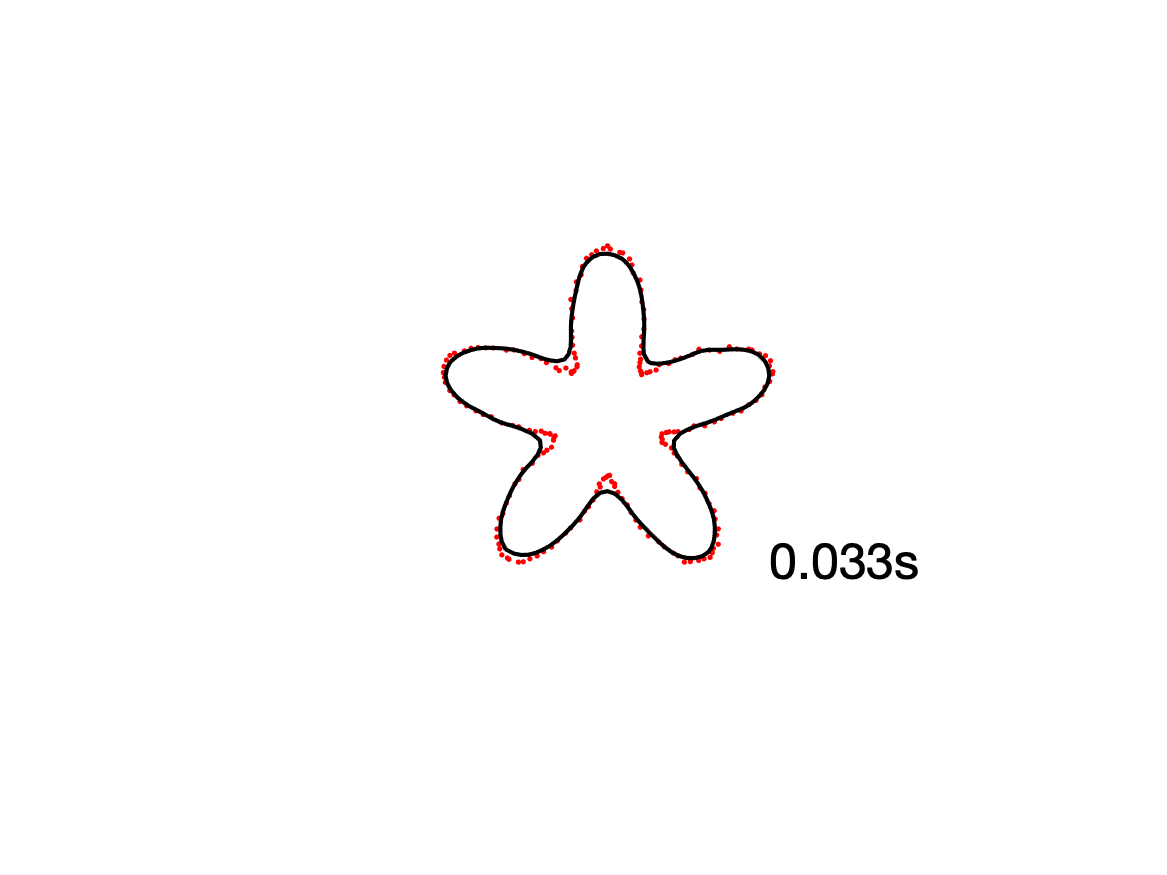} 
 \includegraphics[width = 0.2\textwidth,clip, trim = 6cm 3cm 4cm 3cm]{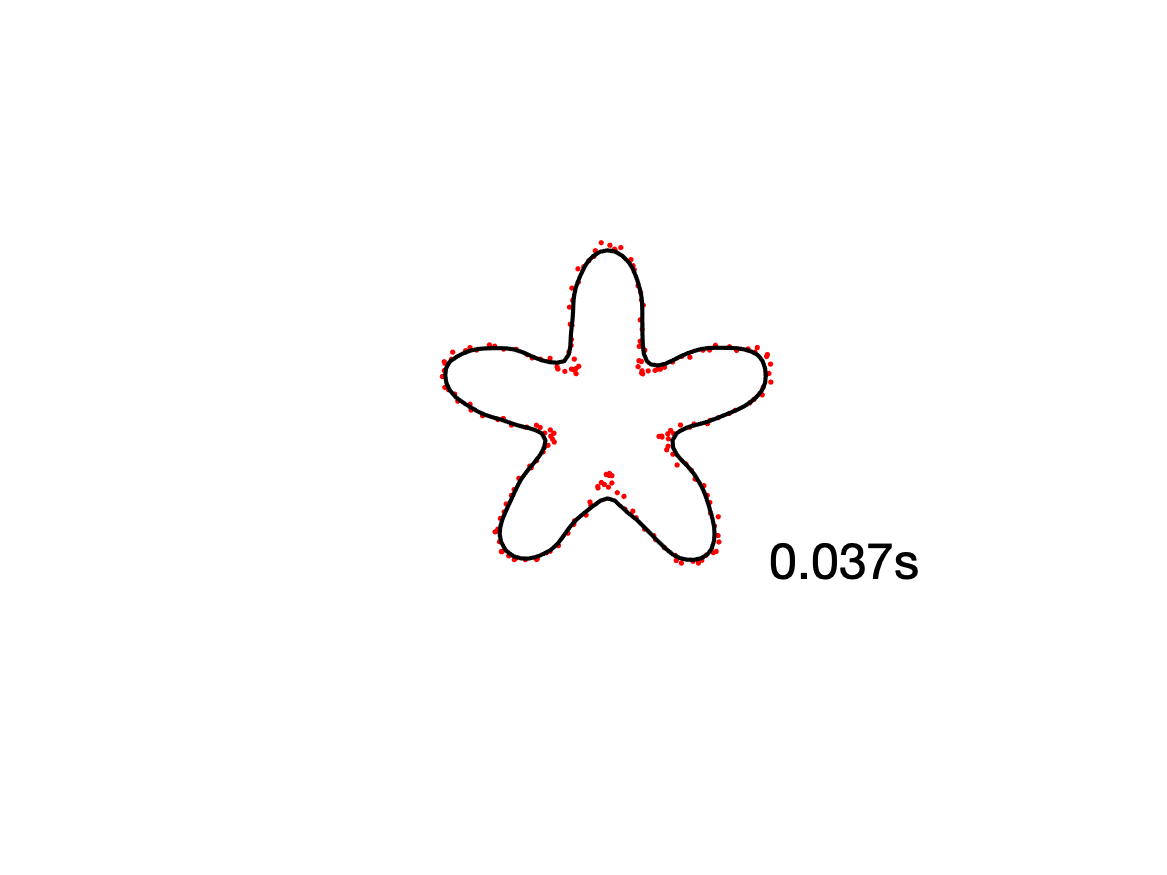} 
 \includegraphics[width = 0.2\textwidth,clip, trim = 6cm 3cm 4cm 3cm]{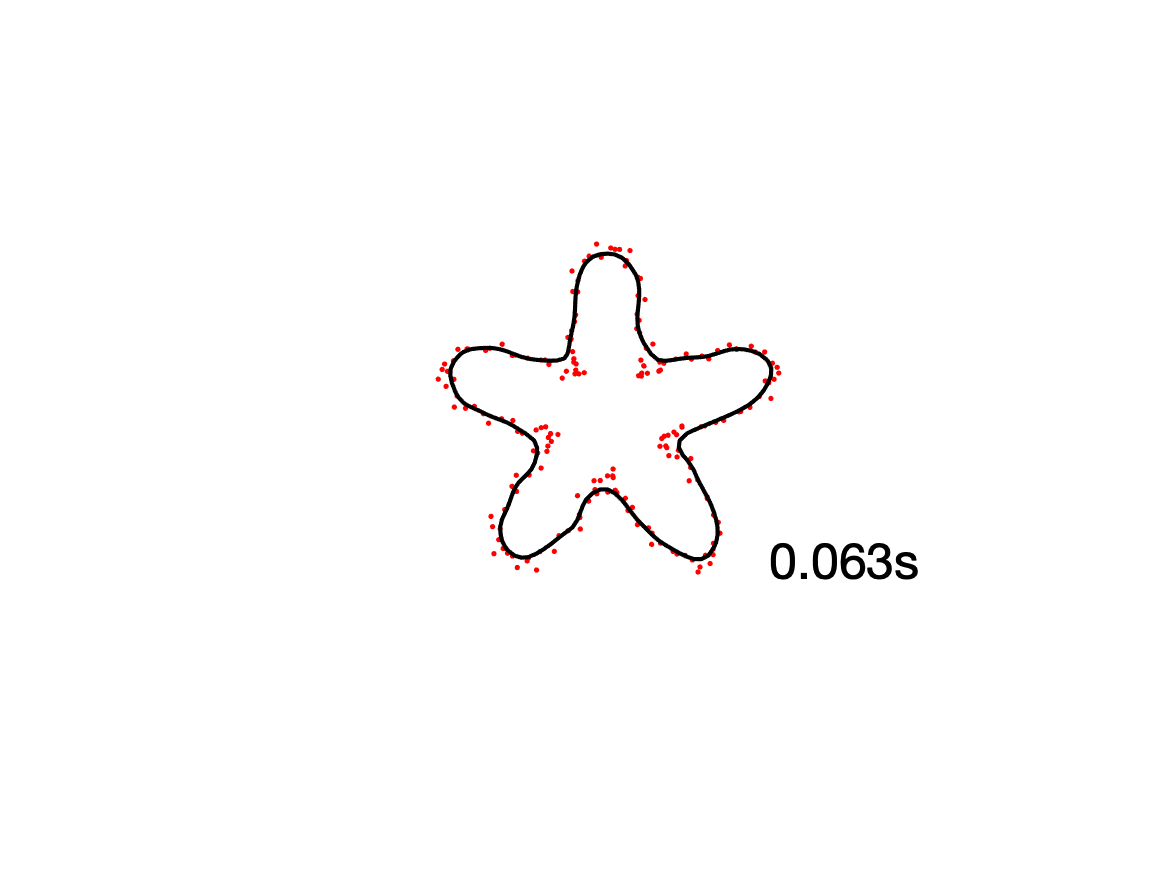} 
 \includegraphics[width = 0.2\textwidth,clip, trim = 6cm 3cm 4cm 3cm]{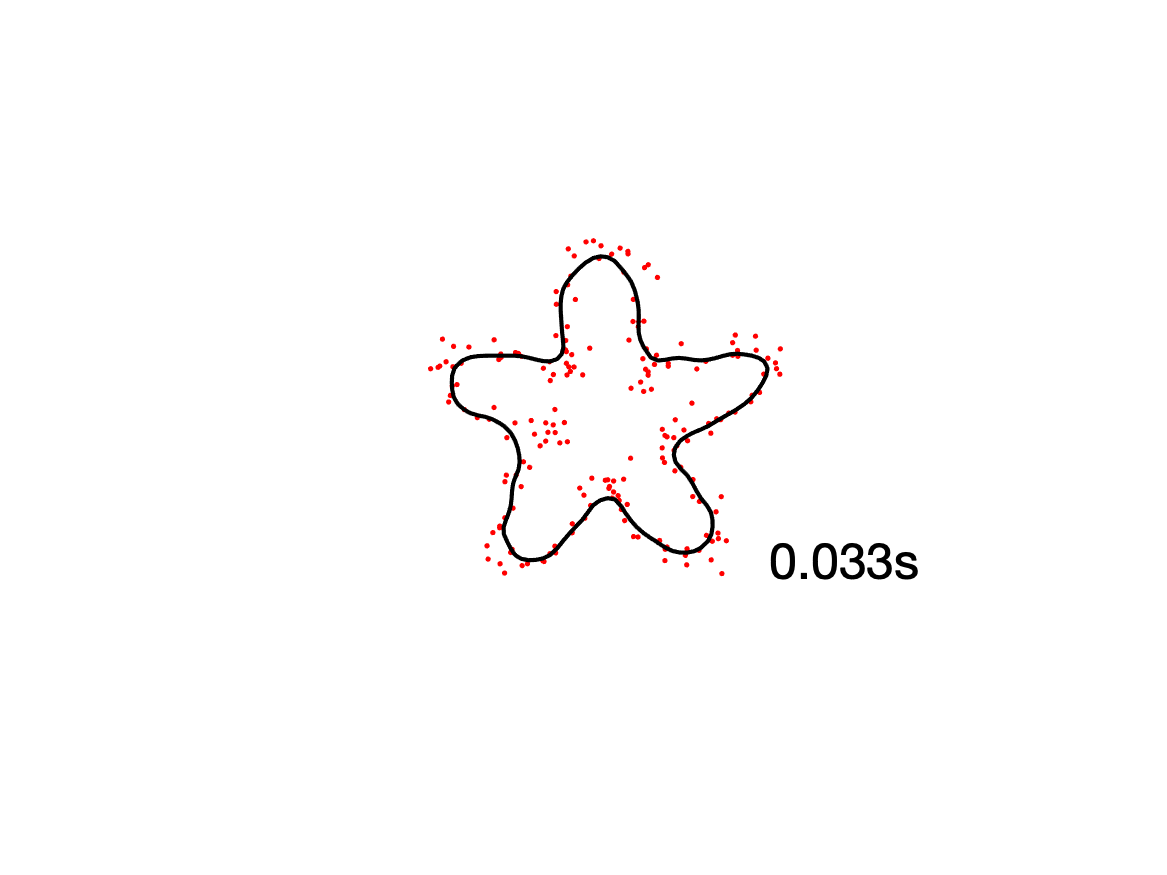} 
  \includegraphics[width = 0.2\textwidth,clip, trim = 6cm 3cm 5cm 3cm]{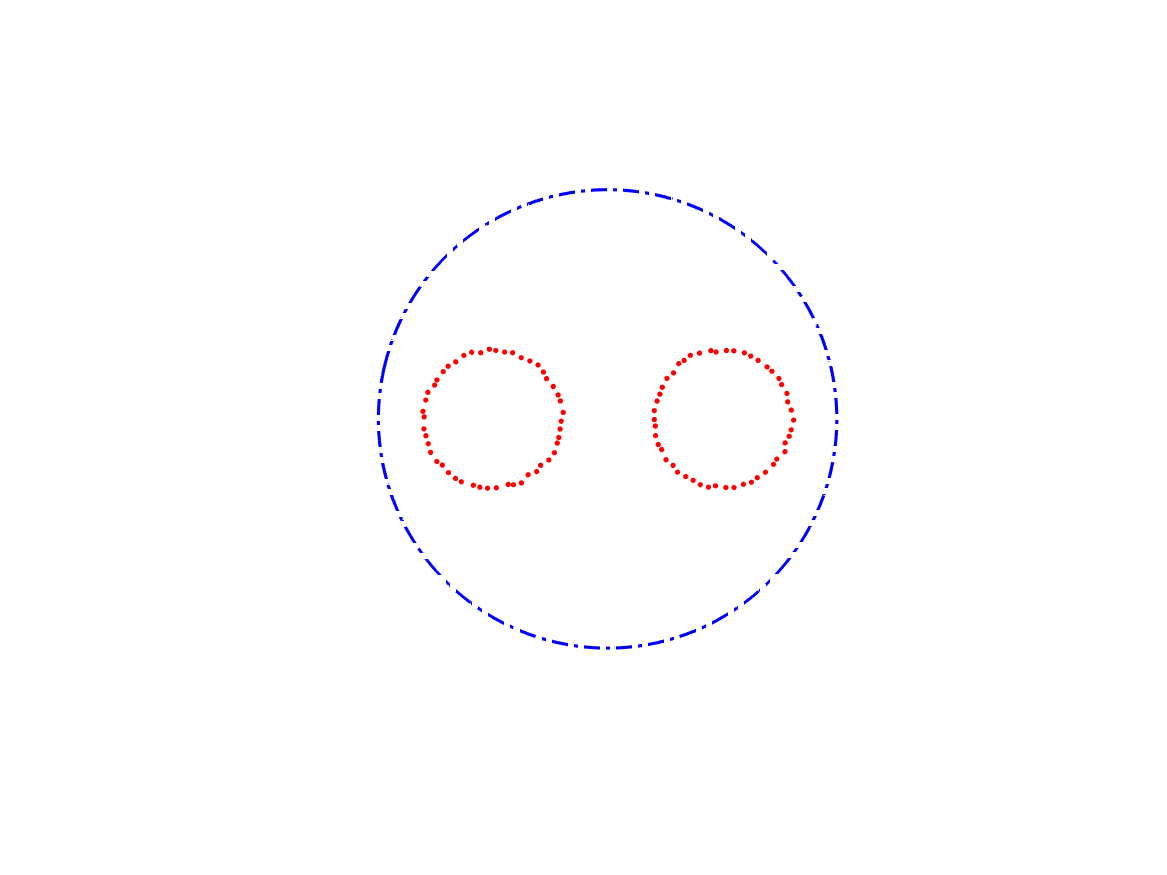} 
 \includegraphics[width = 0.2\textwidth,clip, trim = 6cm 3cm 5cm 3cm]{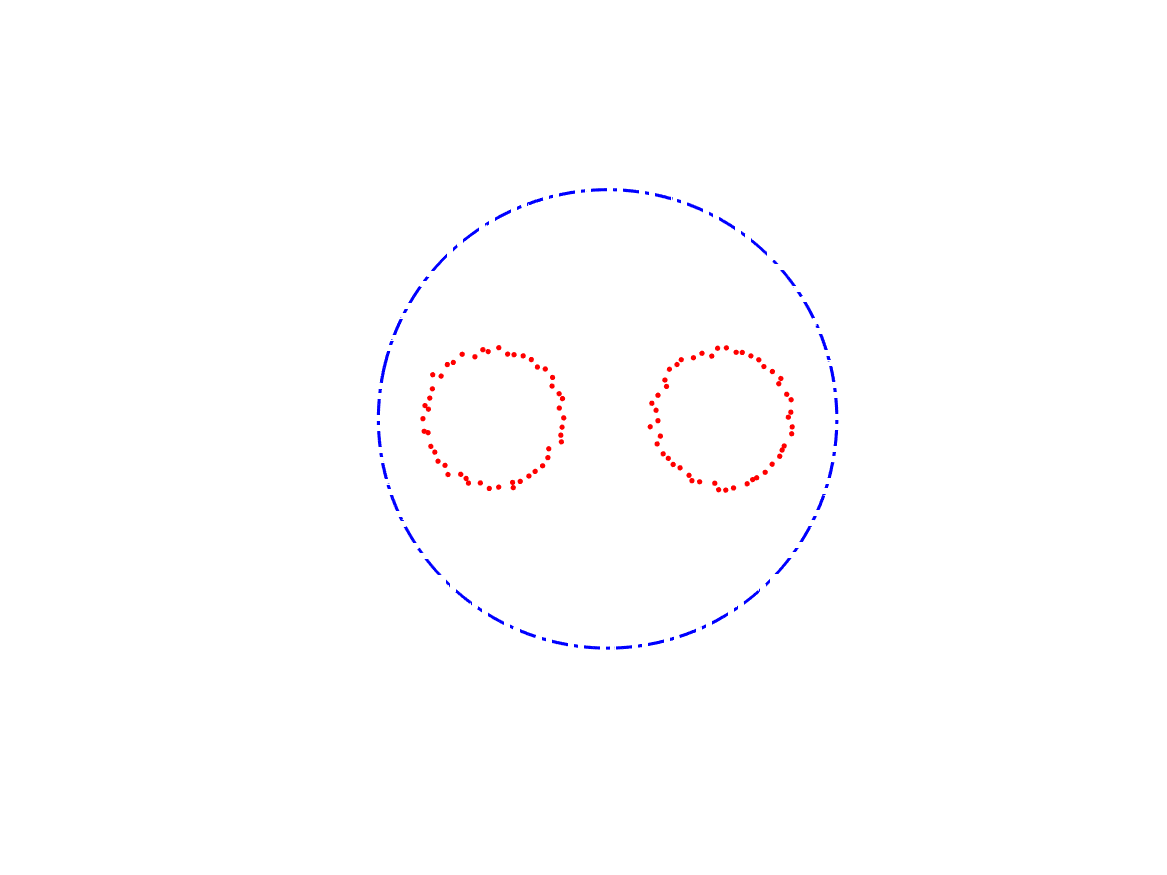} 
 \includegraphics[width = 0.2\textwidth,clip, trim = 6cm 3cm 5cm 3cm]{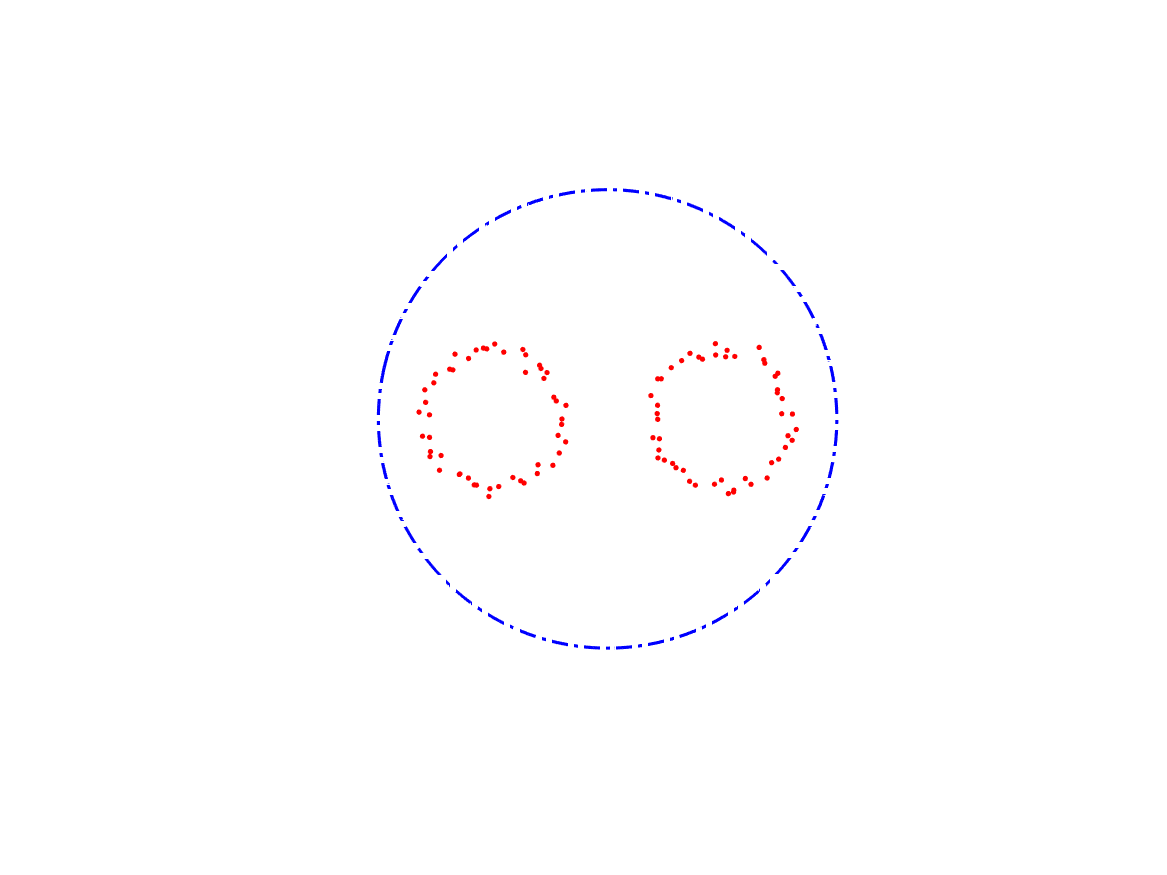} 
 \includegraphics[width = 0.2\textwidth,clip, trim = 6cm 3cm 5cm 3cm]{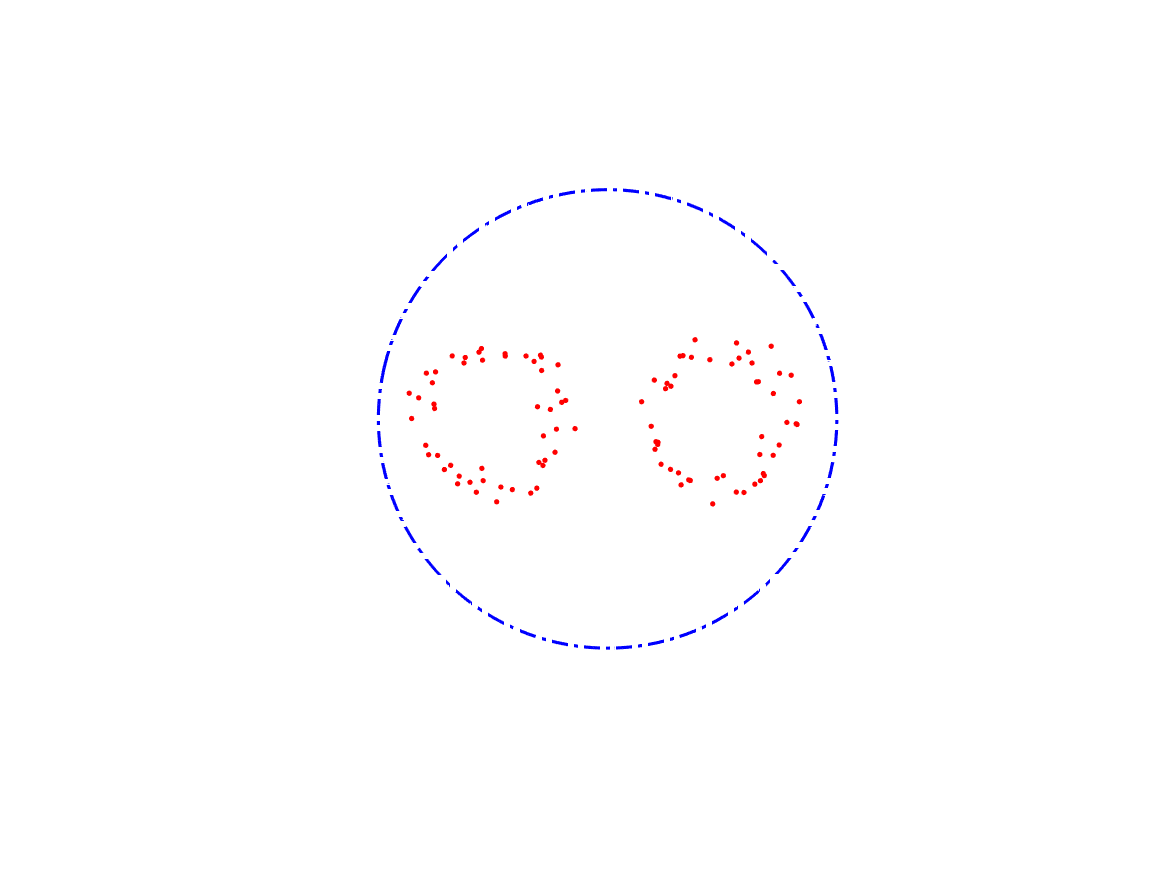} 
 \includegraphics[width = 0.2\textwidth,clip, trim = 6cm 3cm 4cm 3cm]{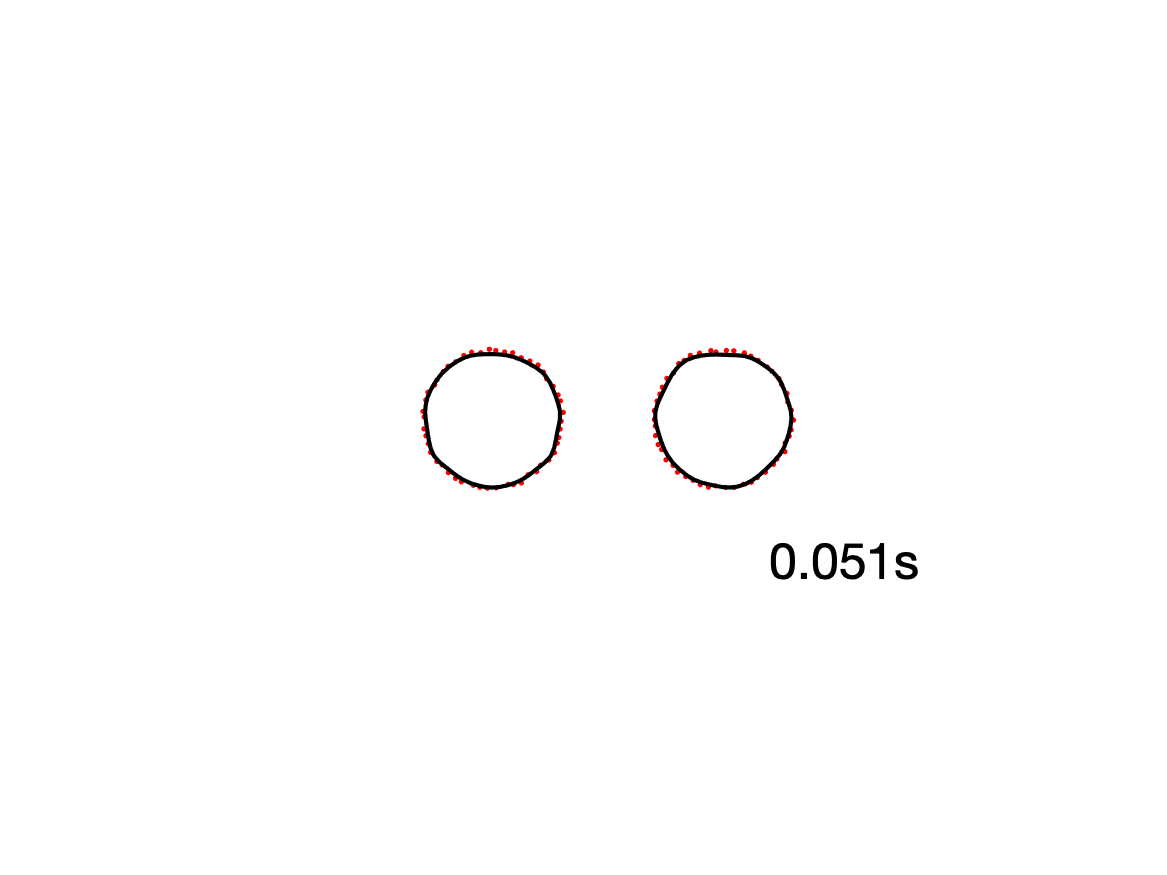} 
 \includegraphics[width = 0.2\textwidth,clip, trim = 6cm 3cm 4cm 3cm]{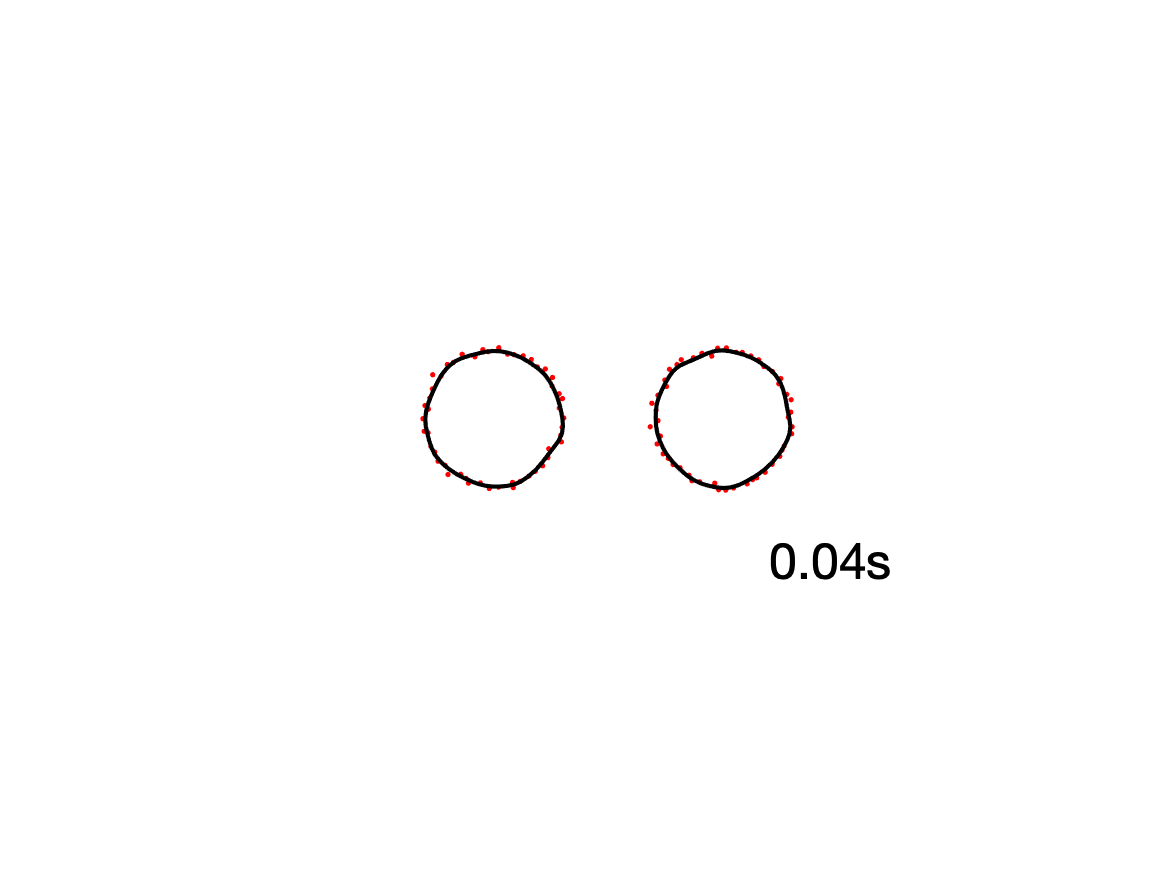} 
 \includegraphics[width = 0.2\textwidth,clip, trim = 6cm 3cm 4cm 3cm]{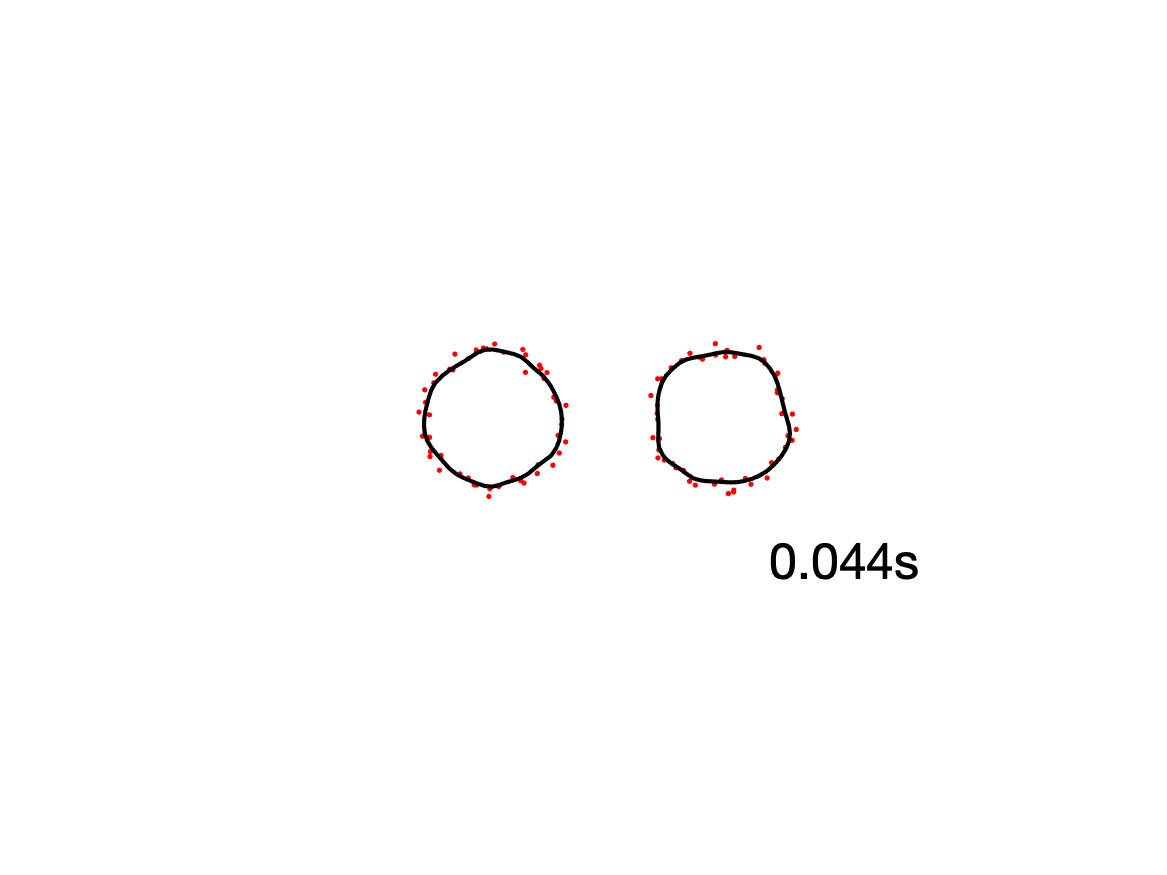} 
 \includegraphics[width = 0.2\textwidth,clip, trim = 6cm 3cm 4cm 3cm]{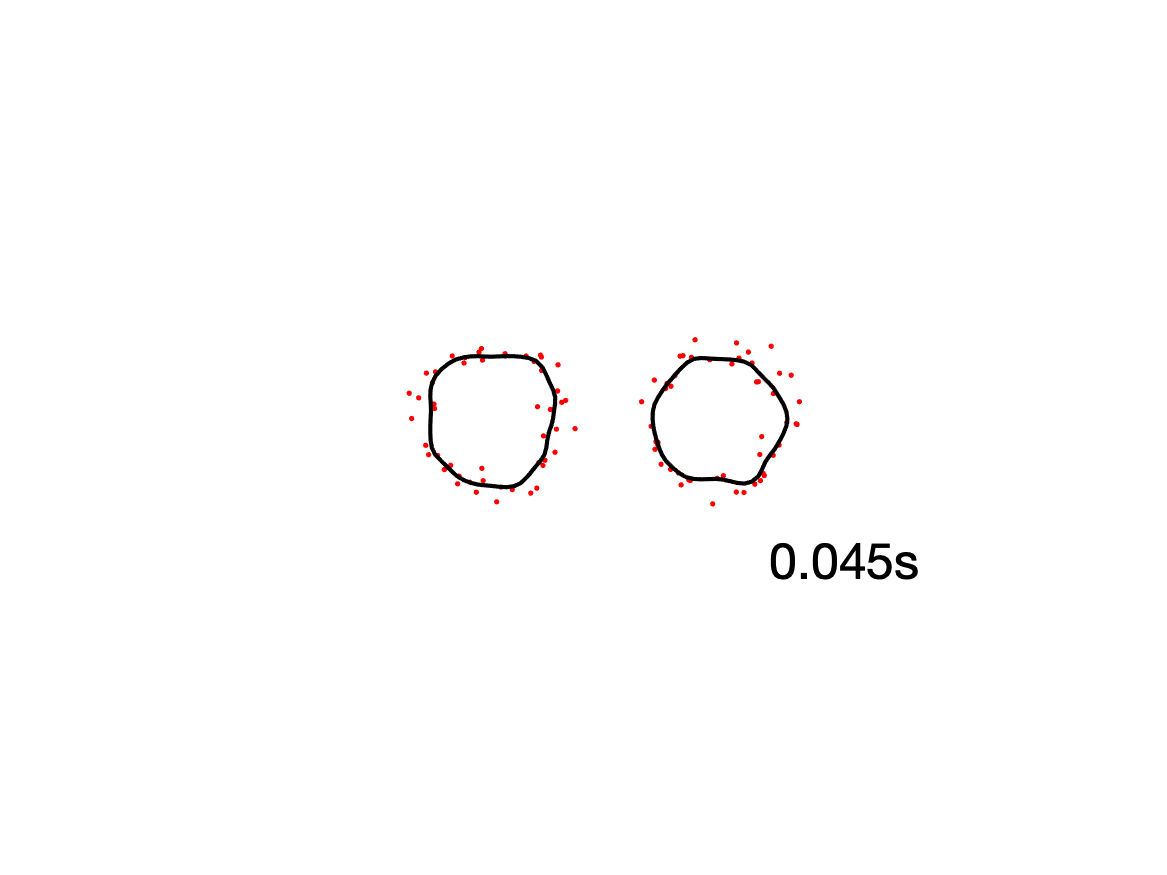} 
 \caption{Results obtained for the five-fold data with different noise. {\bf Left to right:} $\mu = 0.01, 0.02, 0.04$, $0.08$. {\bf 1st and 3rd row:} Noisy data with the initial guess. {\bf 2nd and 4th row:} Results obtained from Algorithm~\ref{a:MBO3}\_\ref{a:MBO2}.  See Section~\ref{sec:noise}. }\label{fig:noise_1}
\end{figure}

\subsubsection{Sensitivity to resolutions.}\label{sec:sensitivity}

In this experiment, we check the sensitivity of the proposed algorithms to the resolution of the computational domain and the number of the point cloud. In Figure~\ref{fig:sensitivity}, we list the results obtained from Algorithm~\ref{a:MBO3}\_\ref{a:MBO2}, using different discretization of the computational domain and different size of the point cloud. In Figure~\ref{a:MBO3}, the size of point clouds from the left to the right are $100$, $200$, $300$, and $400$, respectively. The computational domains from the top to the bottom are discretized by $64^2$, $128^2$, $256^2$, and $512^2$ grids. We observe that as the mesh is refined, the result is closer to the desired curve. In addition, point clouds with larger size can give better results. These agree with our expectations.

\begin{figure}[ht!]
\centering
\includegraphics[width = 0.2\textwidth,clip, trim = 6cm 3cm 4cm 3cm]{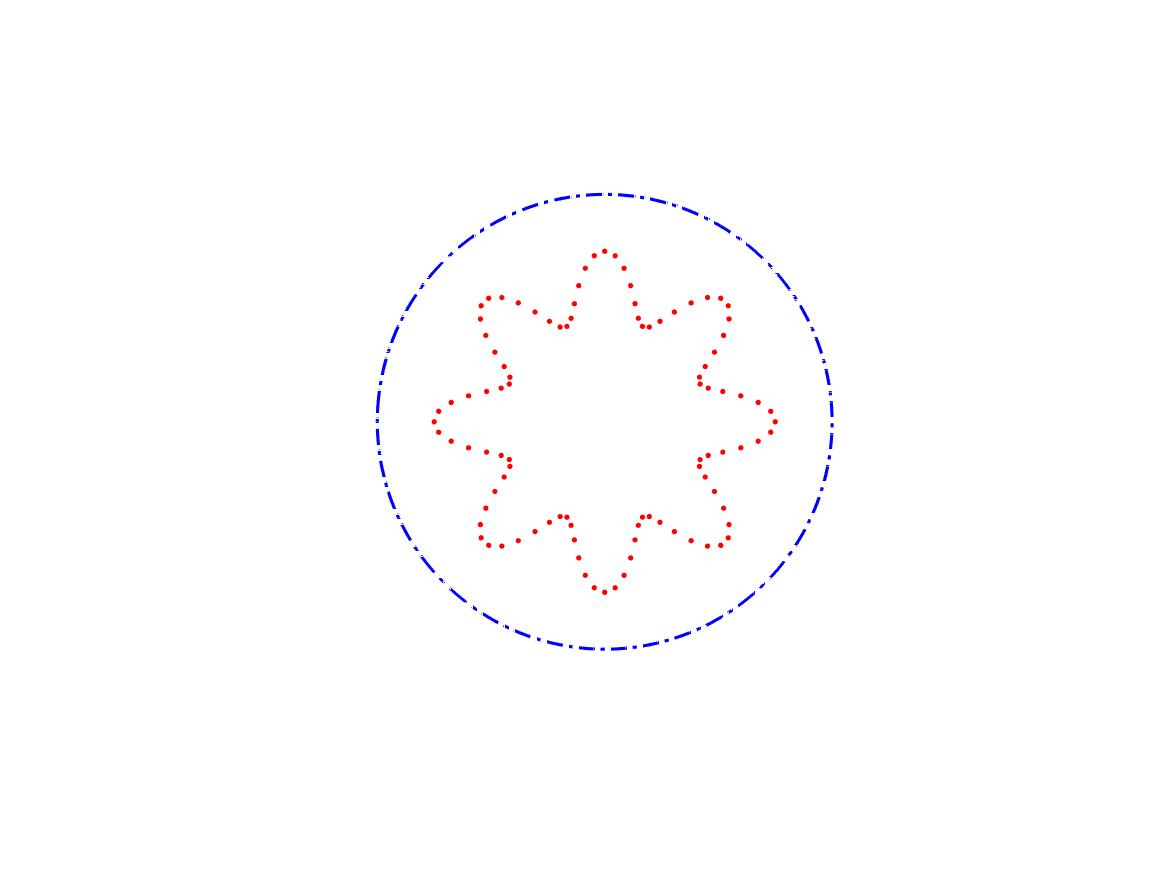}
\includegraphics[width = 0.2\textwidth,clip, trim = 6cm 3cm 4cm 3cm]{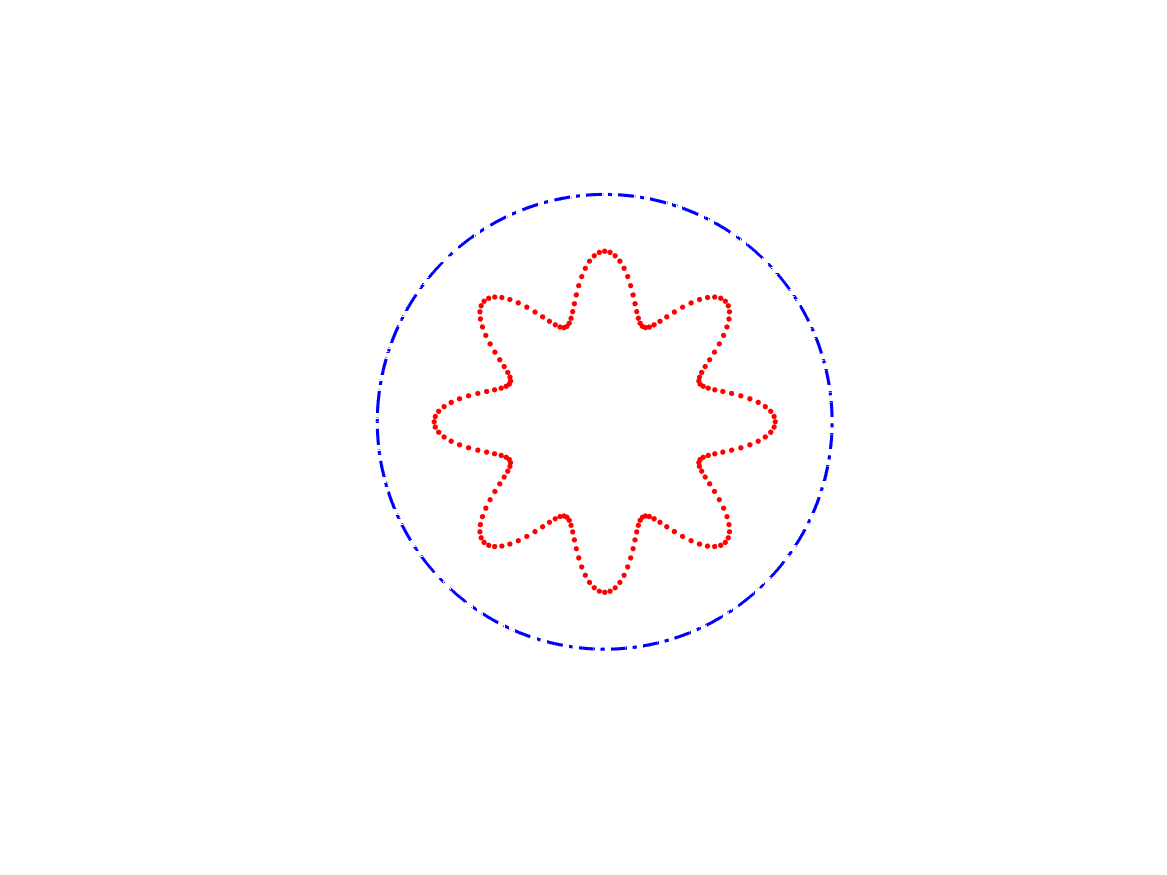}
   \includegraphics[width = 0.2\textwidth,clip, trim = 6cm 3cm 4cm 3cm]{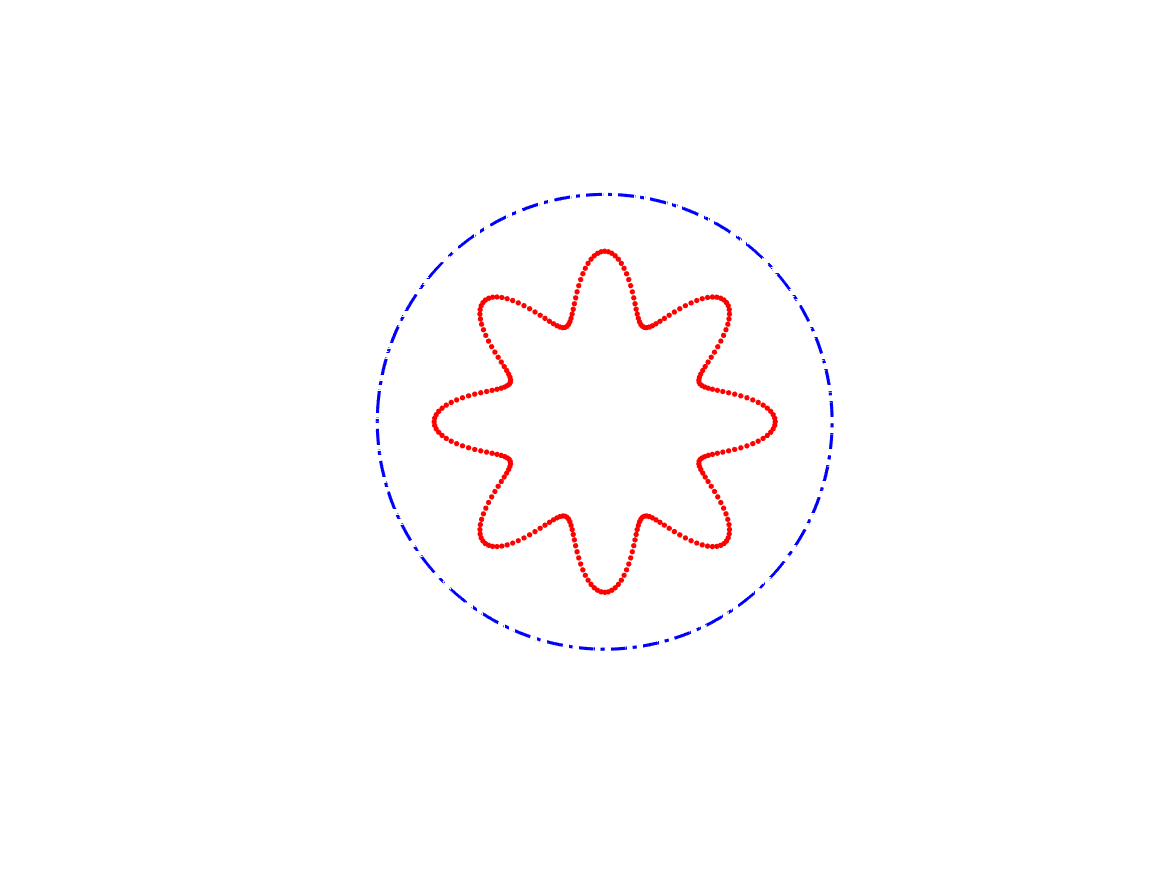}
   \includegraphics[width = 0.2\textwidth,clip, trim = 6cm 3cm 4cm 3cm]{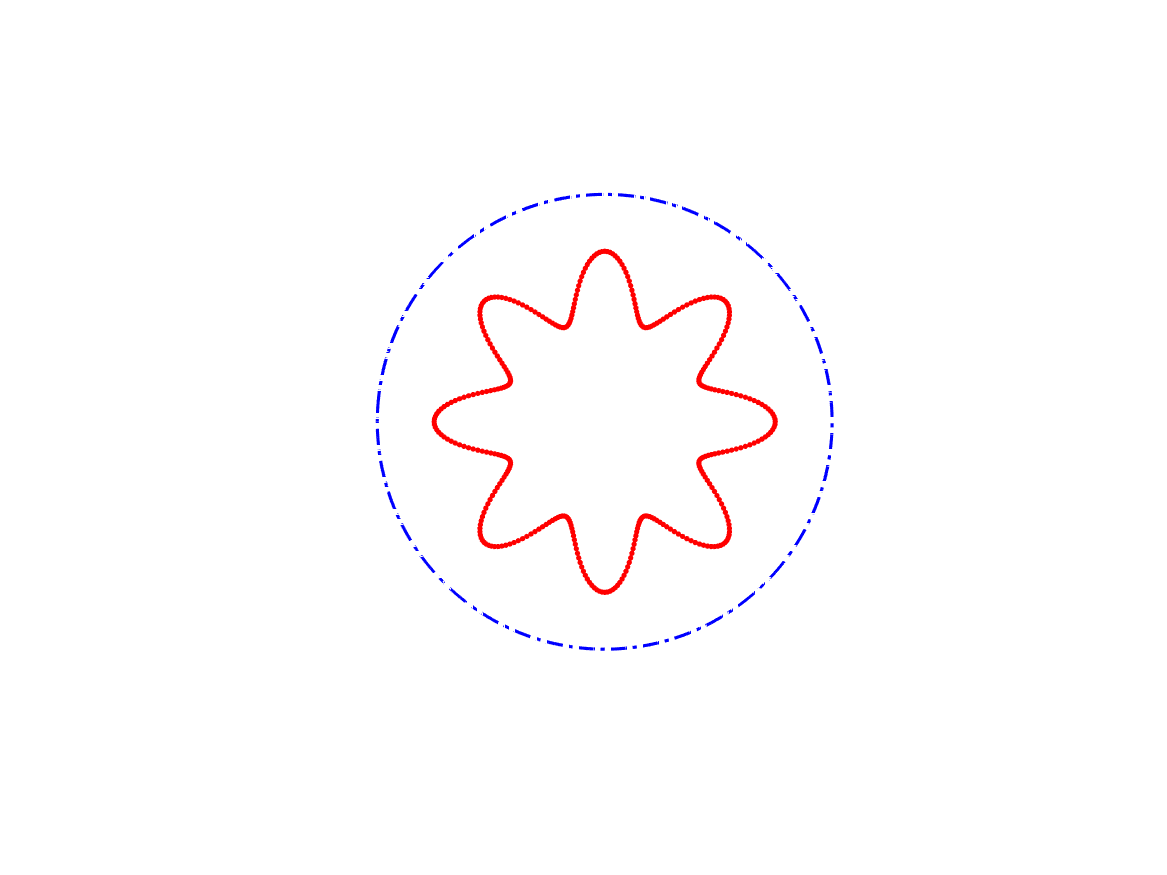} \\
\includegraphics[width = 0.2\textwidth,clip, trim = 6cm 4cm 4cm 4cm]{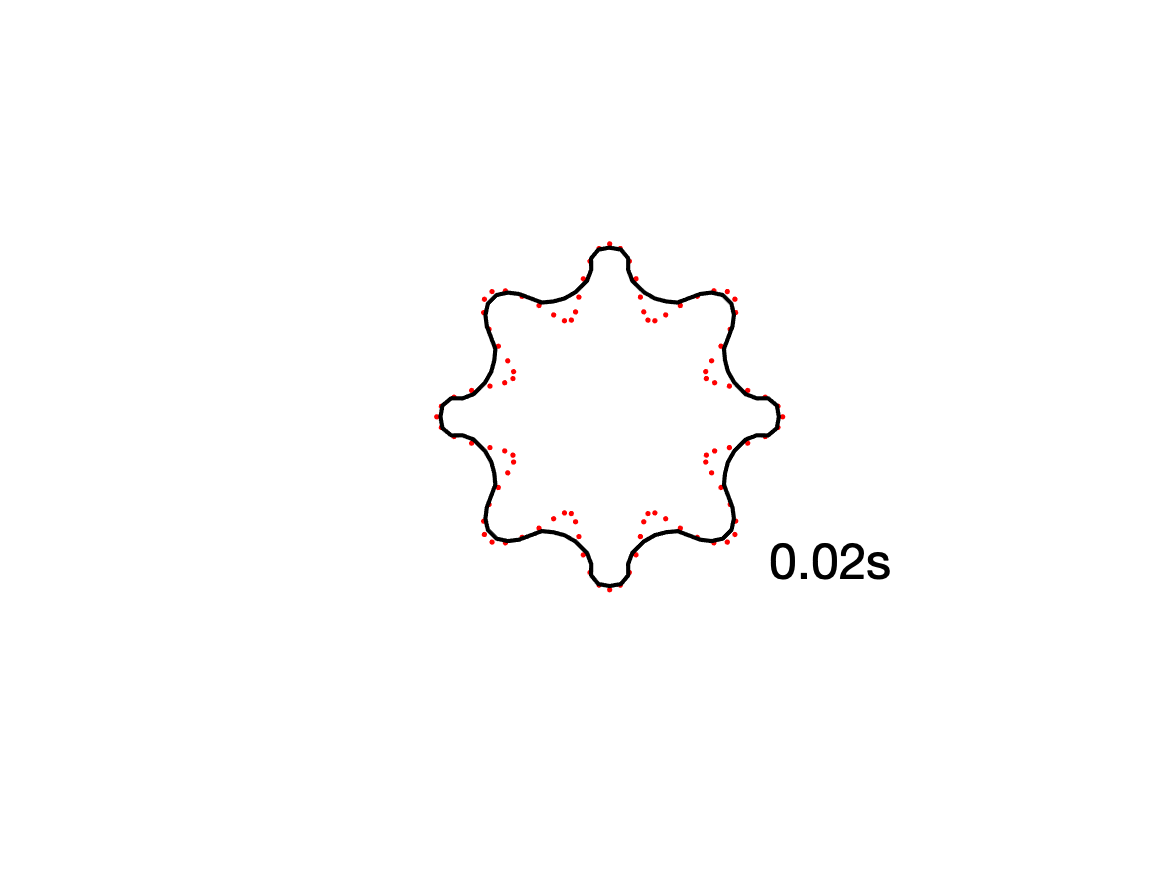} 
  \includegraphics[width = 0.2\textwidth,clip, trim = 6cm 4cm 4cm 4cm]{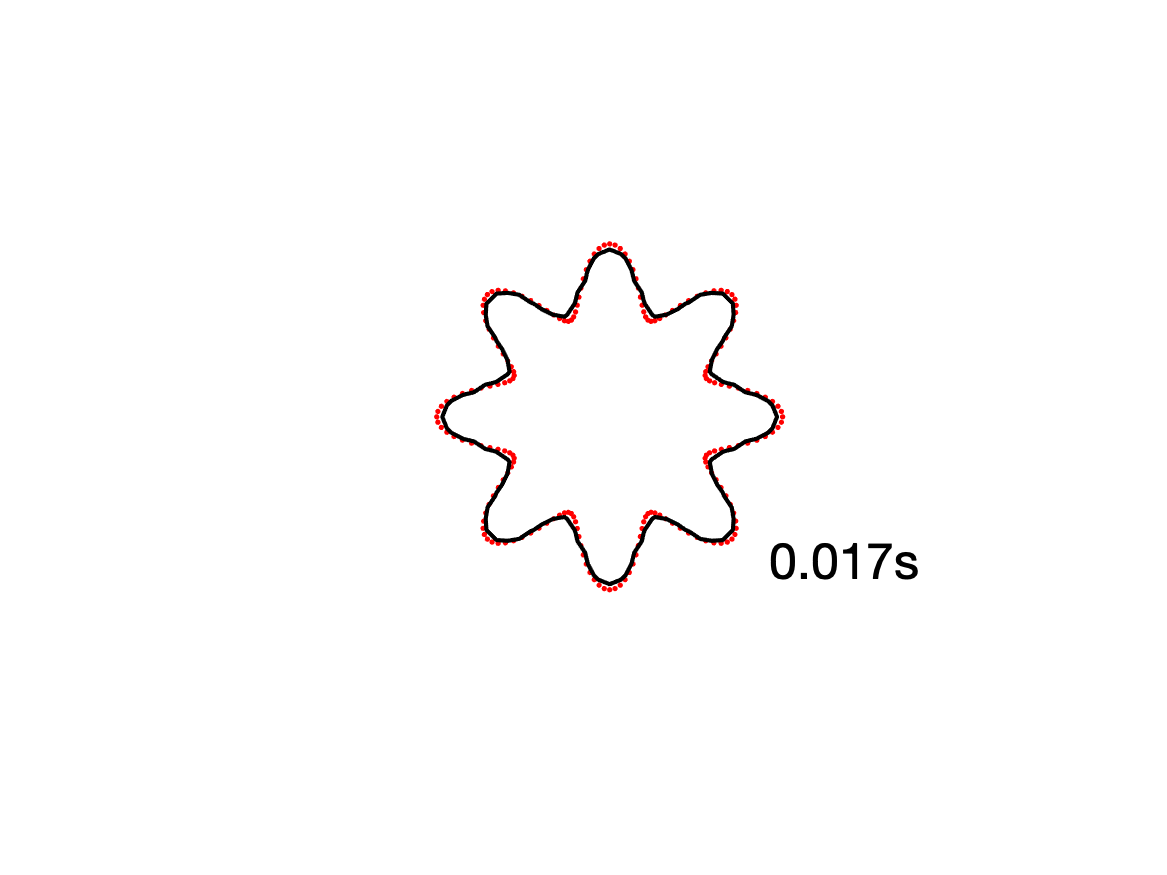} 
   \includegraphics[width = 0.2\textwidth,clip, trim = 6cm 4cm 4cm 4cm]{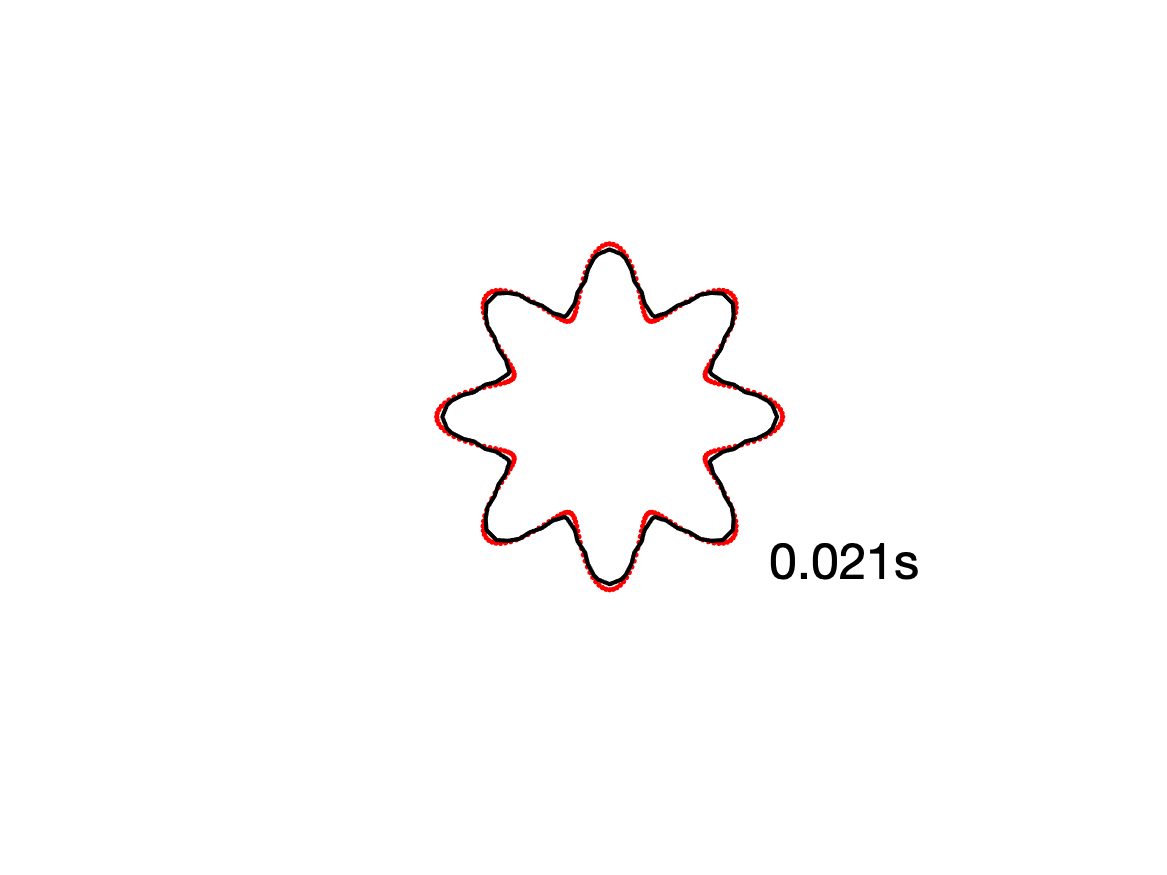} 
    \includegraphics[width = 0.2\textwidth,clip, trim = 6cm 4cm 4cm 4cm]{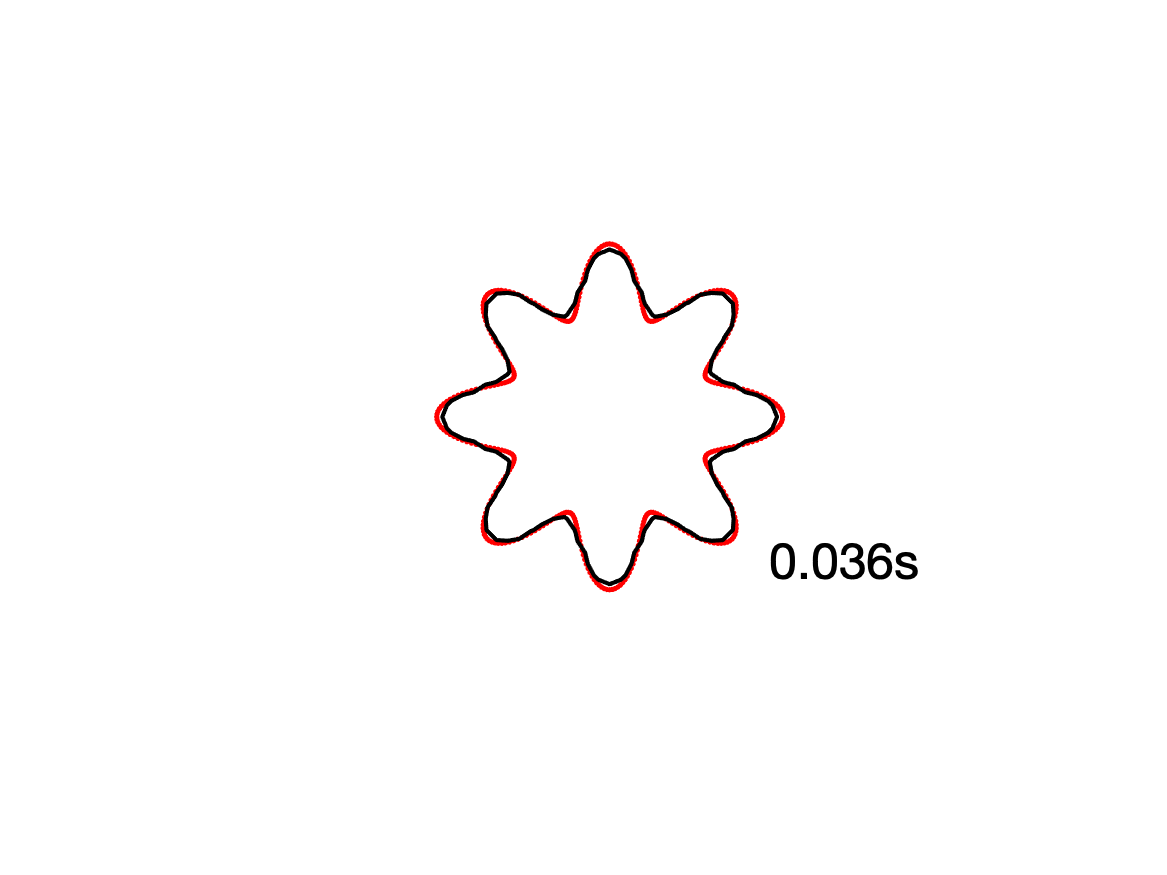}\\
 \includegraphics[width = 0.2\textwidth,clip, trim = 6cm 4cm 4cm 4cm]{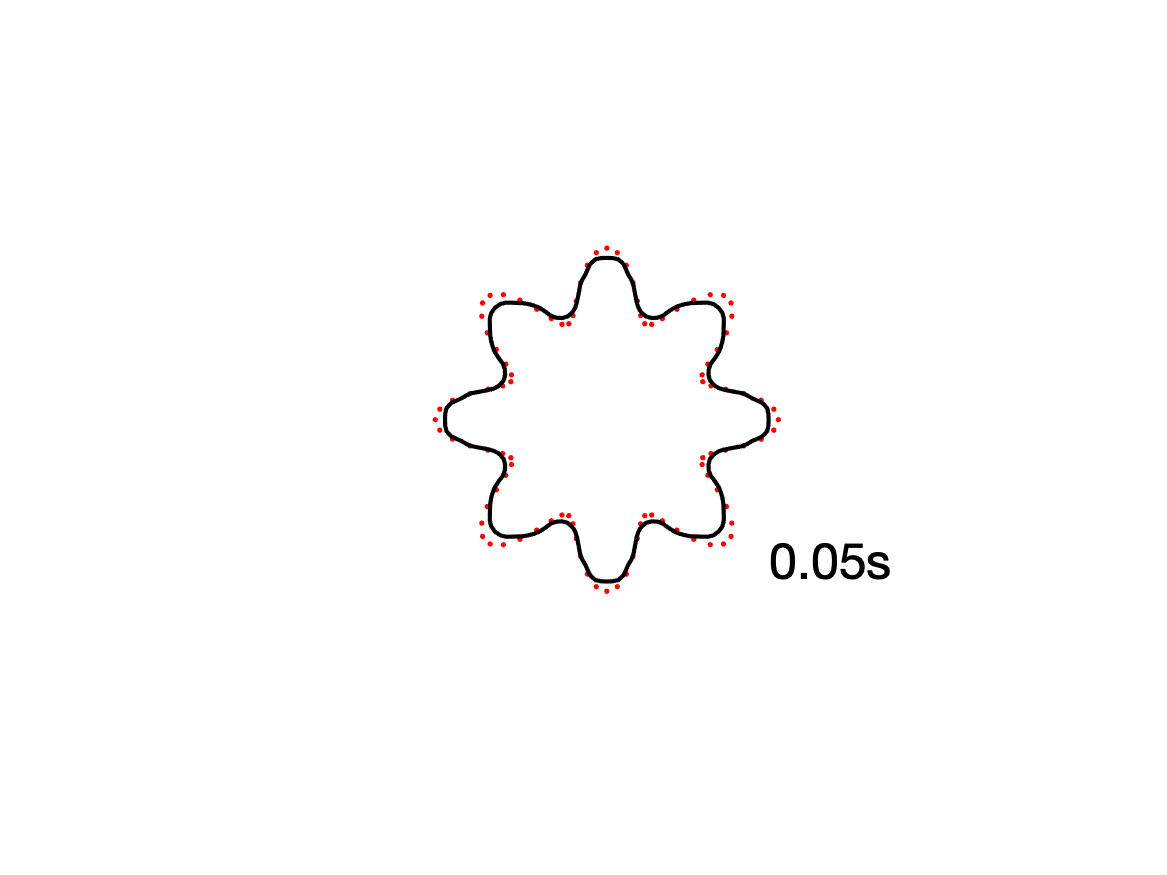} 
  \includegraphics[width = 0.2\textwidth,clip, trim = 6cm 4cm 4cm 4cm]{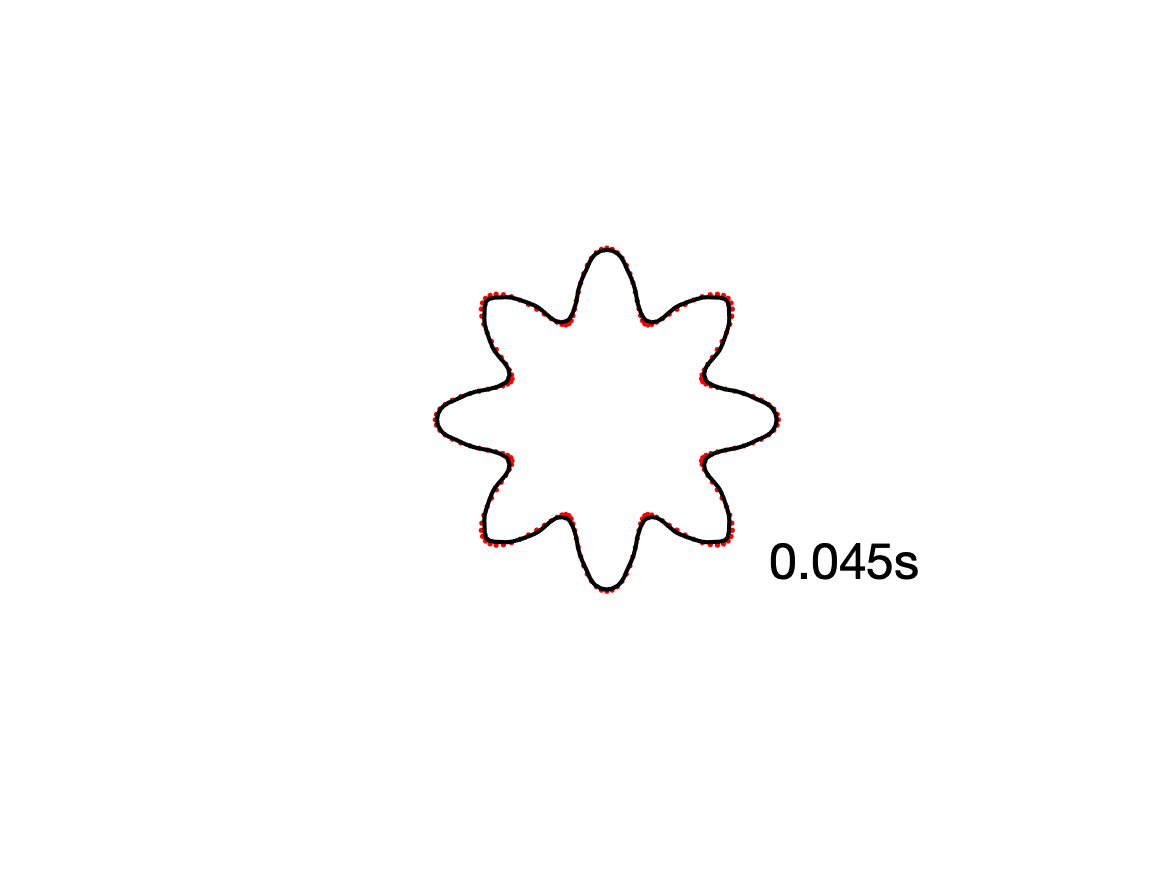} 
   \includegraphics[width = 0.2\textwidth,clip, trim = 6cm 4cm 4cm 4cm]{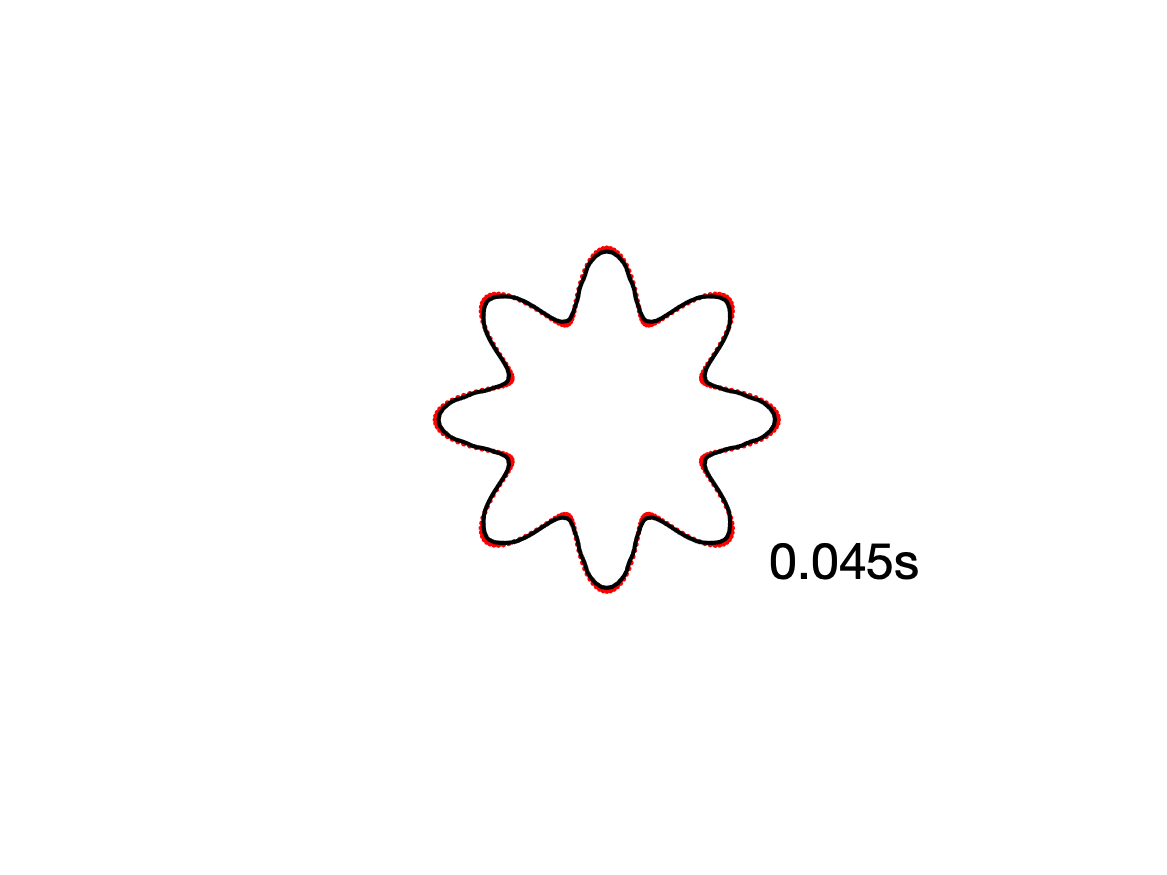} 
    \includegraphics[width = 0.2\textwidth,clip, trim = 6cm 4cm 4cm 4cm]{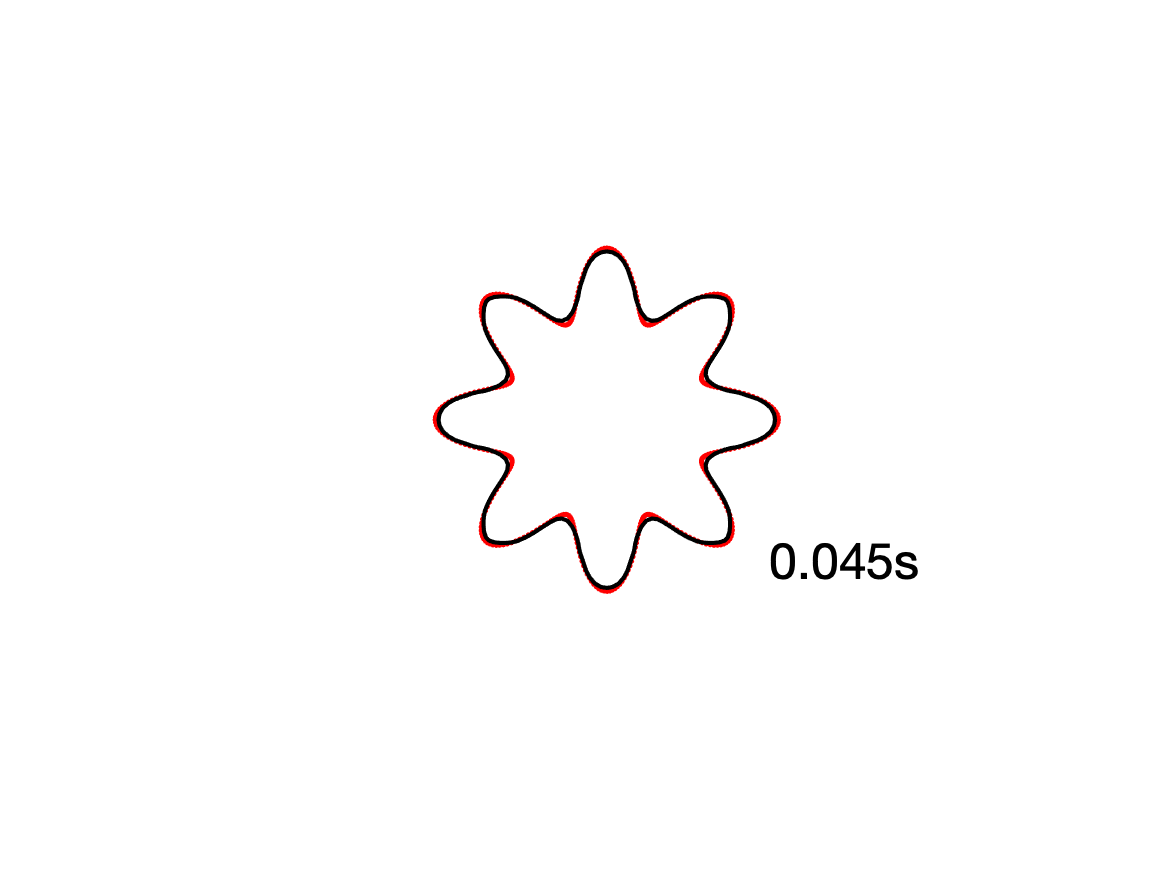}\\
 \includegraphics[width = 0.2\textwidth,clip, trim = 6cm 4cm 4cm 4cm]{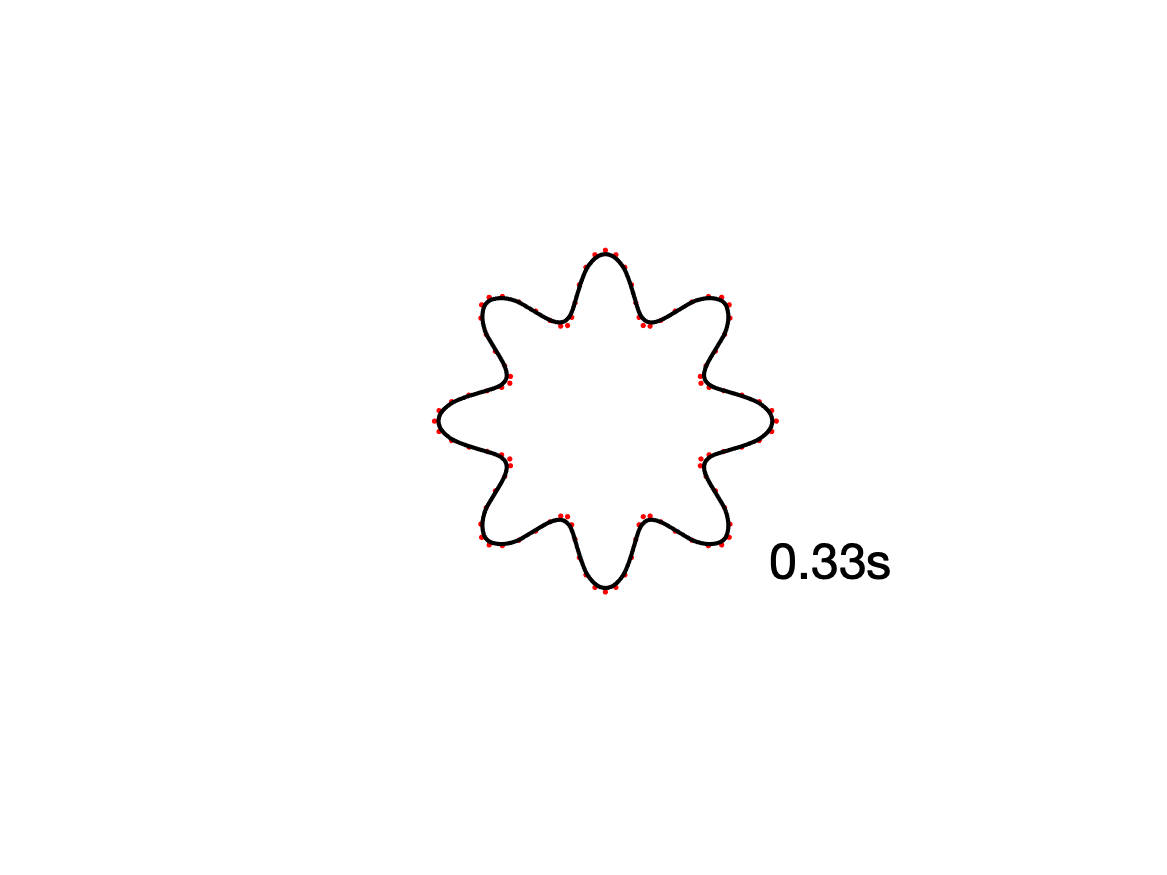} 
  \includegraphics[width = 0.2\textwidth,clip, trim = 6cm 4cm 4cm 4cm]{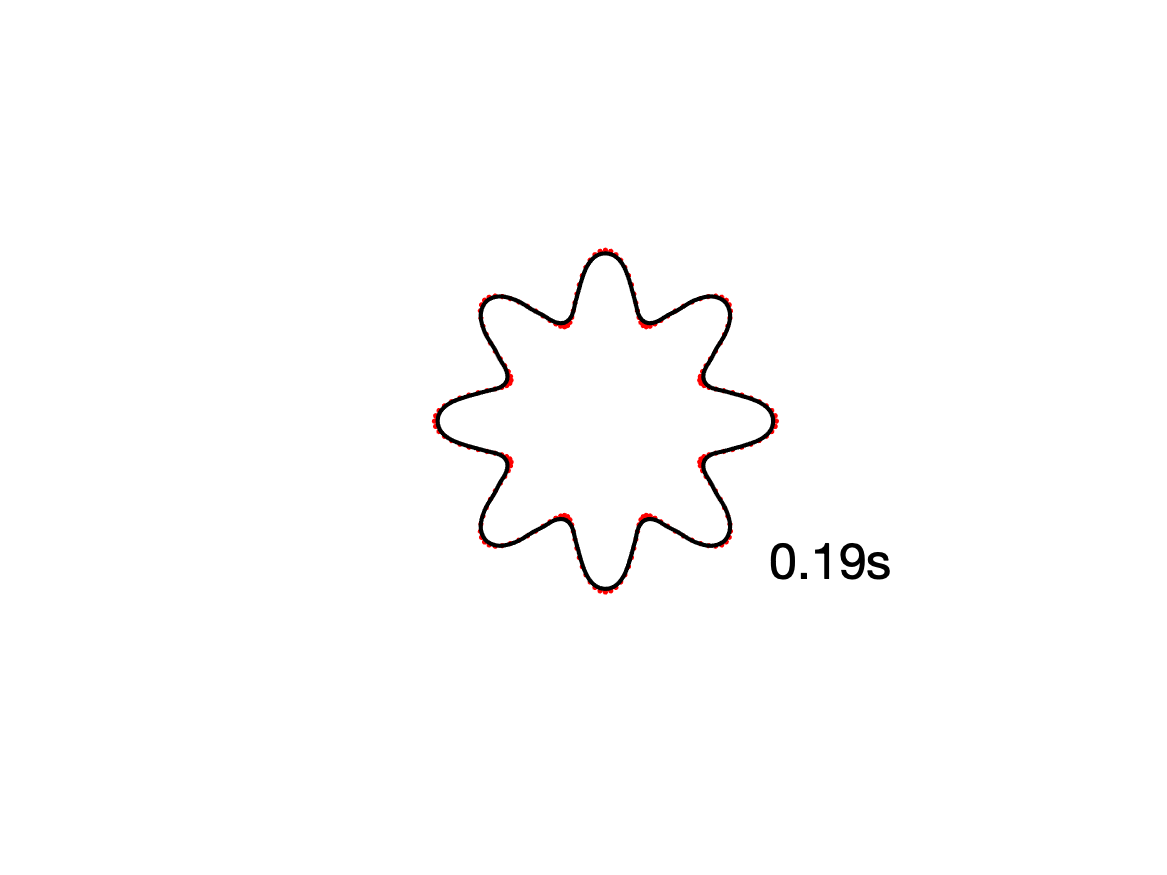} 
   \includegraphics[width = 0.2\textwidth,clip, trim = 6cm 4cm 4cm 4cm]{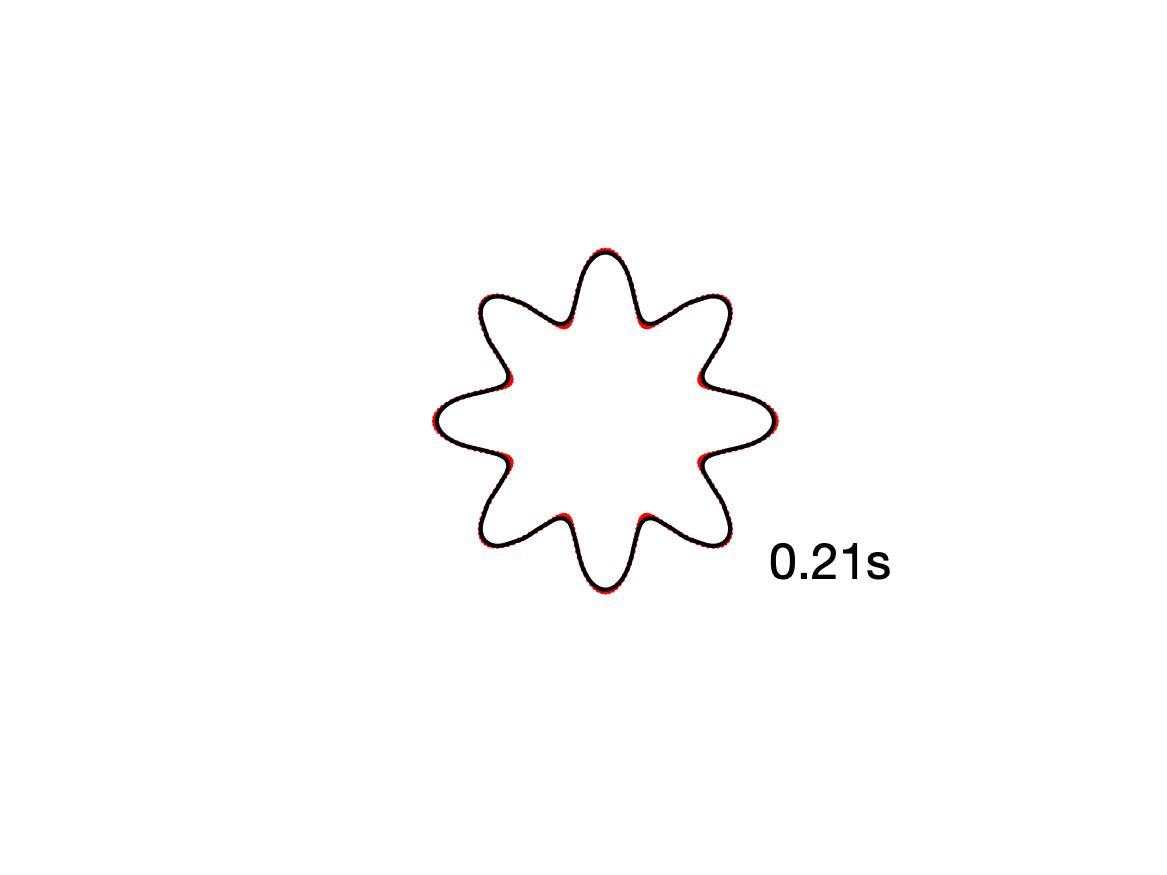} 
    \includegraphics[width = 0.2\textwidth,clip, trim = 6cm 4cm 4cm 4cm]{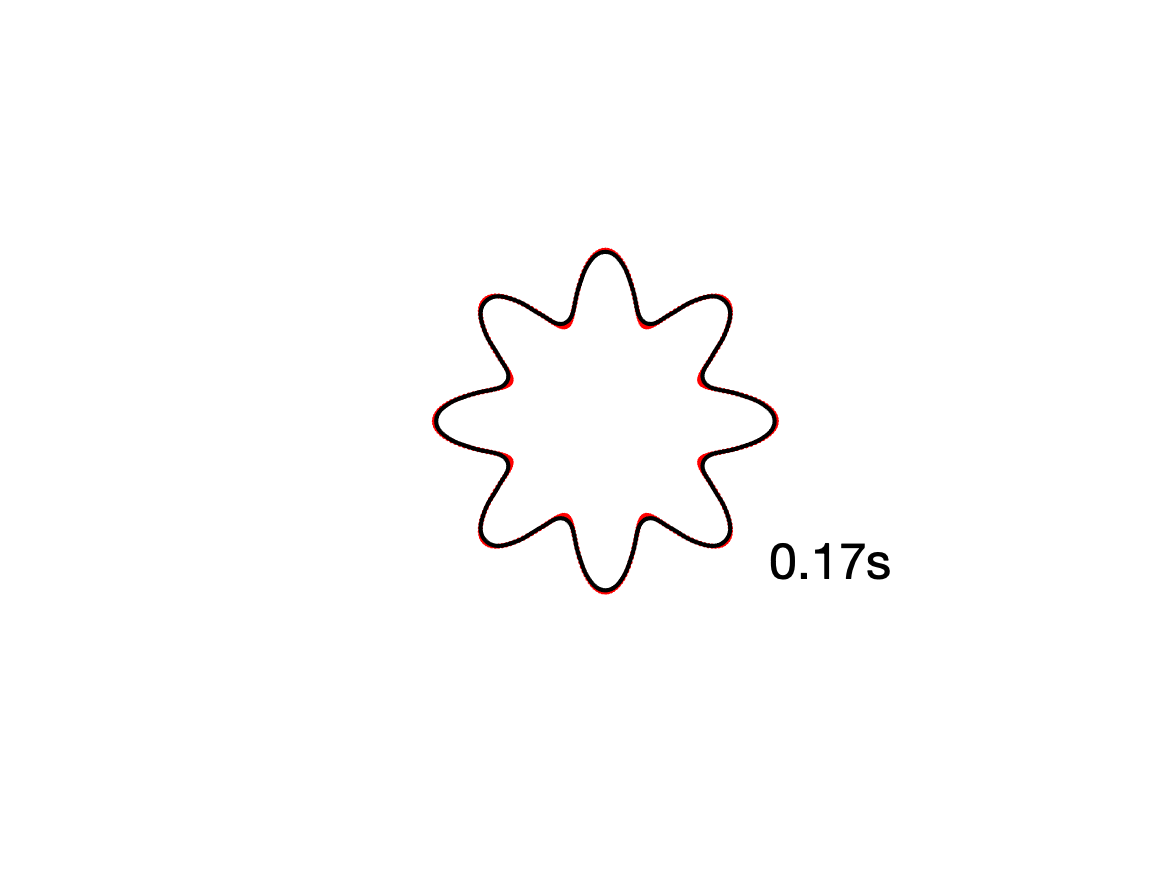}\\
 \includegraphics[width = 0.2\textwidth,clip, trim = 6cm 4cm 4cm 4cm]{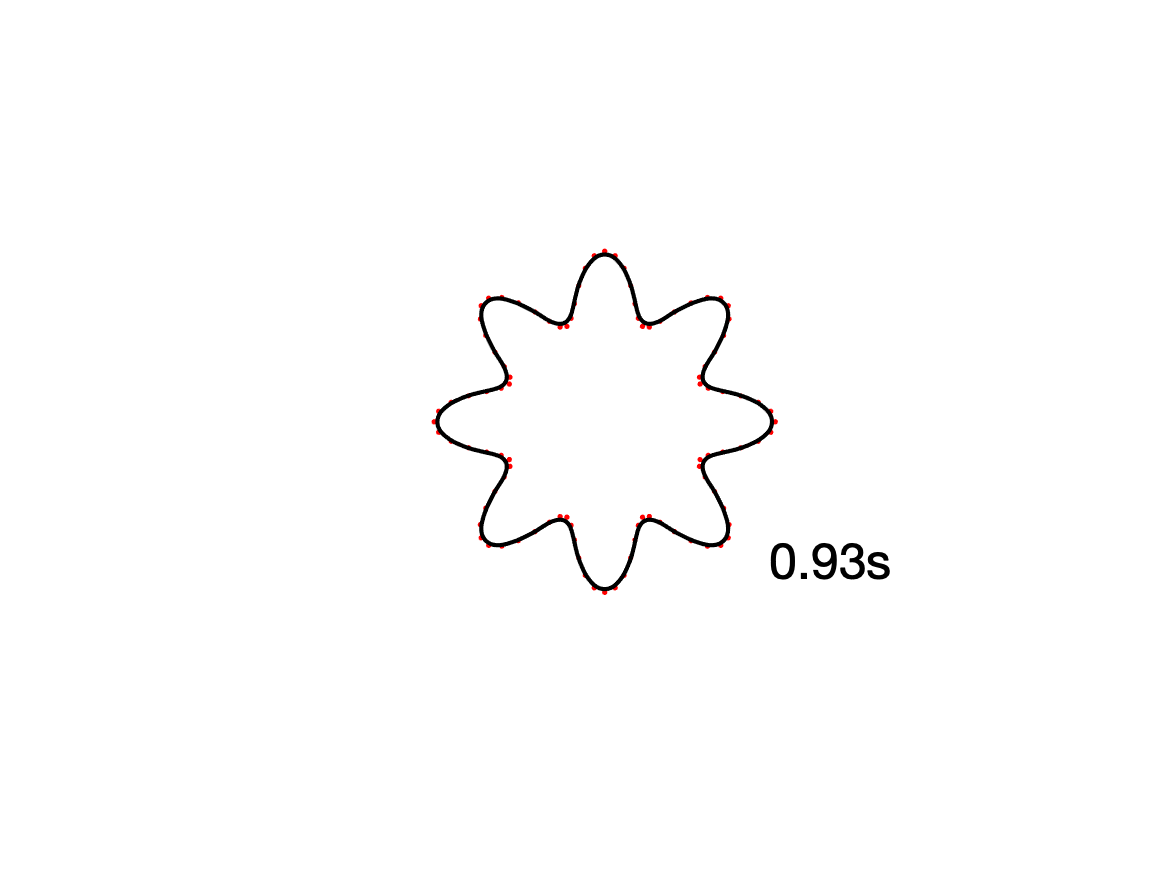} 
\includegraphics[width = 0.2\textwidth,clip, trim = 6cm 4cm 4cm 4cm]{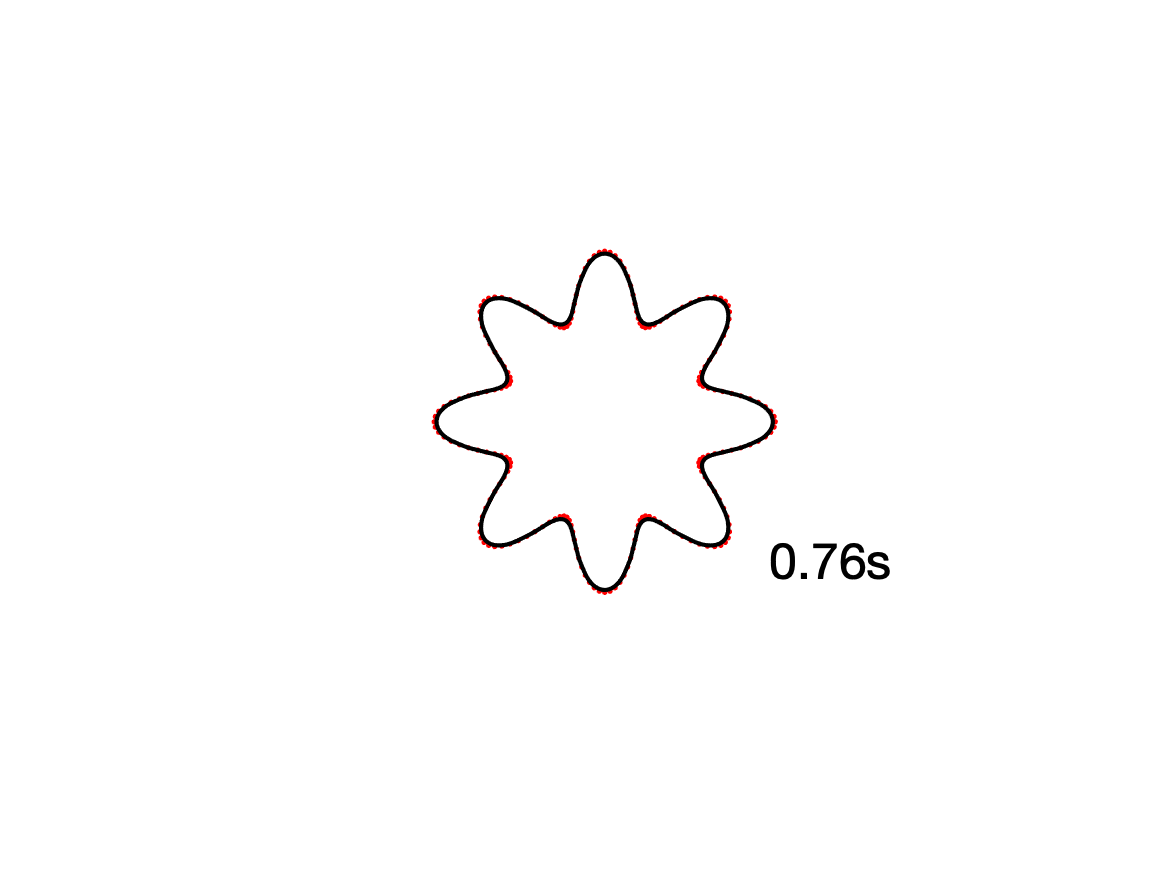} 
   \includegraphics[width = 0.2\textwidth,clip, trim = 6cm 4cm 4cm 4cm]{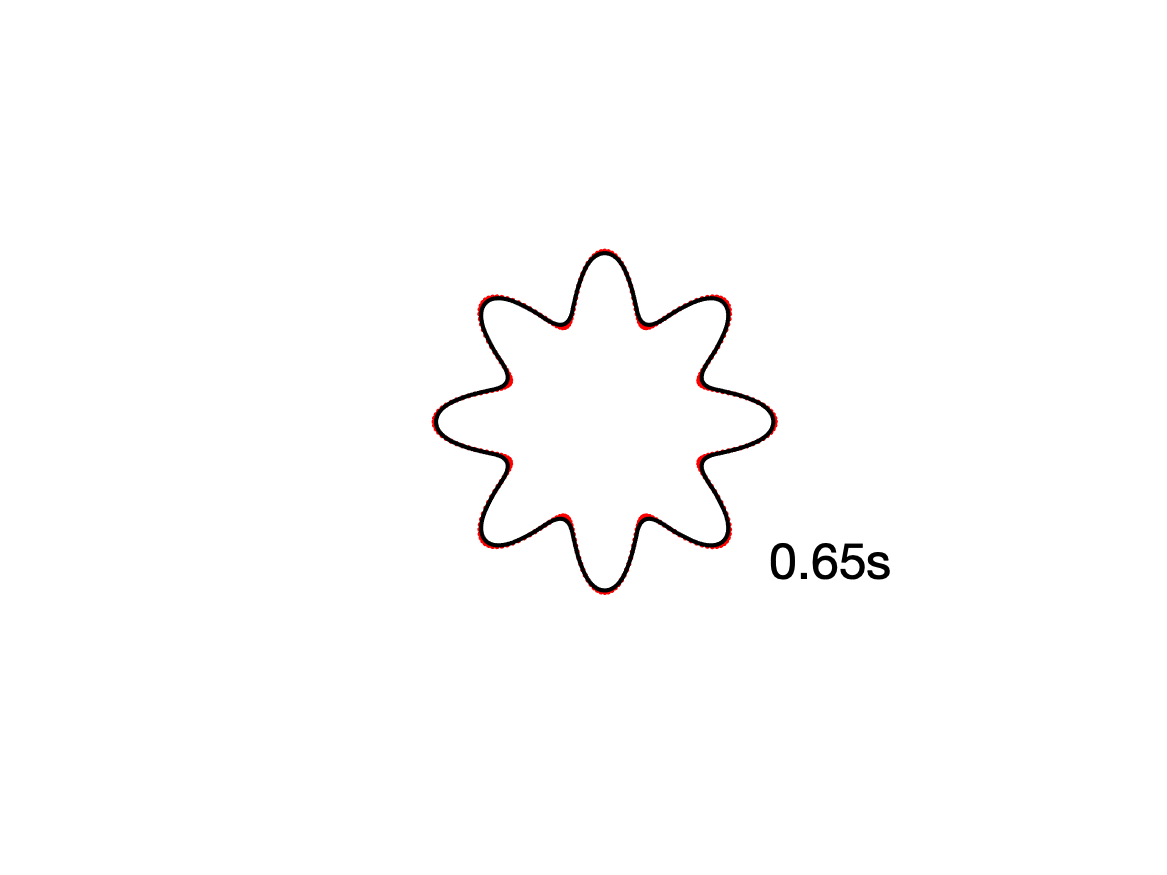} 
   \includegraphics[width = 0.2\textwidth,clip, trim = 6cm 4cm 4cm 4cm]{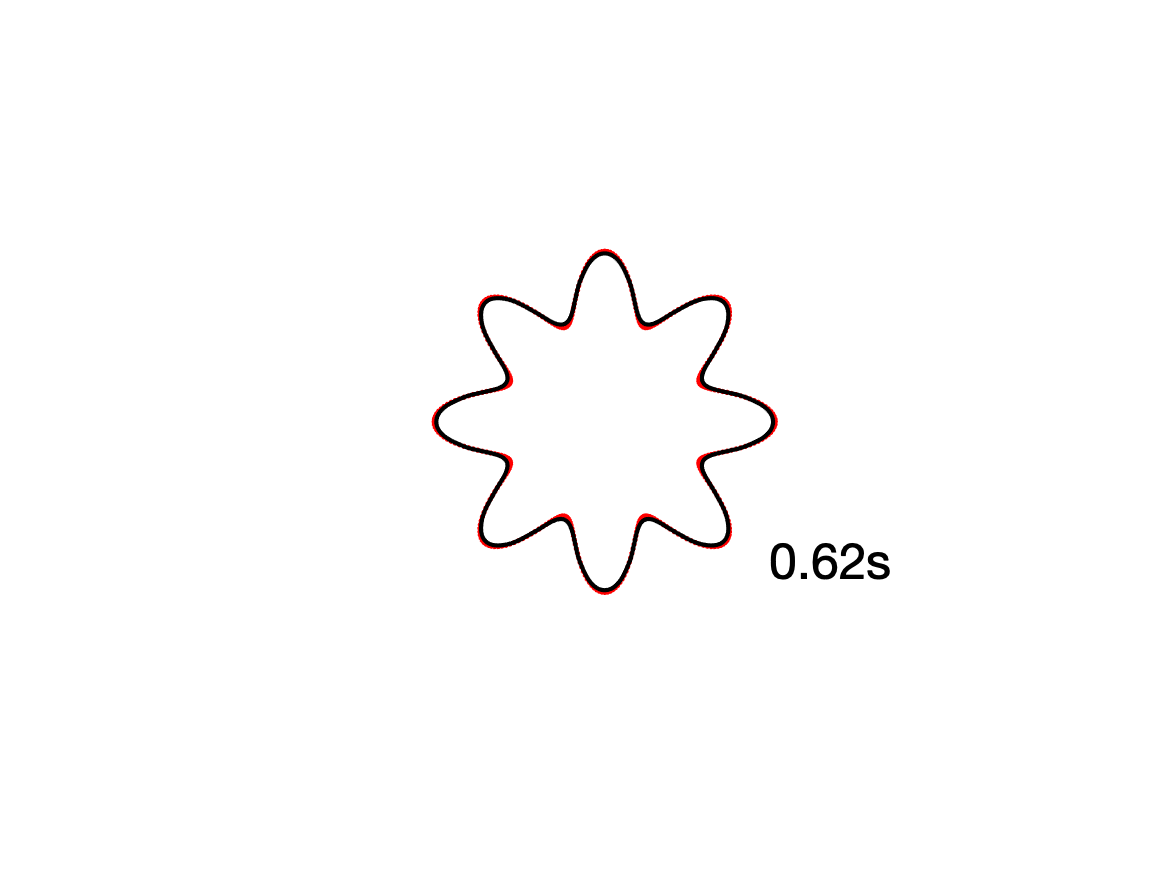} 
   \caption{Results obtained from Algorithm~\ref{a:MBO3} using different discretization of the computational domain and different sizes of point clouds. {\bf Top to bottom:} Computational domain discretized by $64^2$, $128^2$, $256^2$, and $512^2$ grids. {\bf Left to right:} Point clouds with $100$, $200$, $300$, and $400$ points. See Section~\ref{sec:sensitivity}.}\label{fig:sensitivity}
\end{figure}

\subsubsection{Efficiency comparison with level set approaches.}\label{sec:com}
In this example, we compare the efficiency between Algorithm~\ref{a:MBO3}\_\ref{a:MBO2} and level set approaches. A recent work in \cite{he2019fast} has carefully studied the efficiency comparison between the semi-implicit method (SIM) and the classic level set approach in \cite{Zhao_2000},  showing great improvement in the efficiency. Therefore, in this section, we simply compare the efficiency between Algorithm~\ref{a:MBO3}\_\ref{a:MBO2} and SIM in \cite{he2019fast}. We consider different point clouds (different $m$) generated using $N=200$ uniform points $\theta_i$ in $[0,2\pi]$:
\[\begin{cases}
x_i = r_i \cos(\theta_i), \\
y_i = r_i \sin(\theta_i),
\end{cases}
\]
where $r_i = 1+0.4\sin(m\theta_i)$; see Figure~\ref{fig:com} for the cases when $m = 3, 4, 5, 6, 7$, and $8$. To make the comparison fair, we use the same initial guess as shown in Figure~\ref{fig:com} and same discretization ($128^2$ grids) of the computational domain. The table shows the dramatical acceleration in the computational CPU time and figures indicate the improvement on the accuracy.  

\begin{figure}[ht!]
\centering
\includegraphics[width = 0.3\textwidth,clip, trim = 4cm 2cm 4cm 1.5cm]{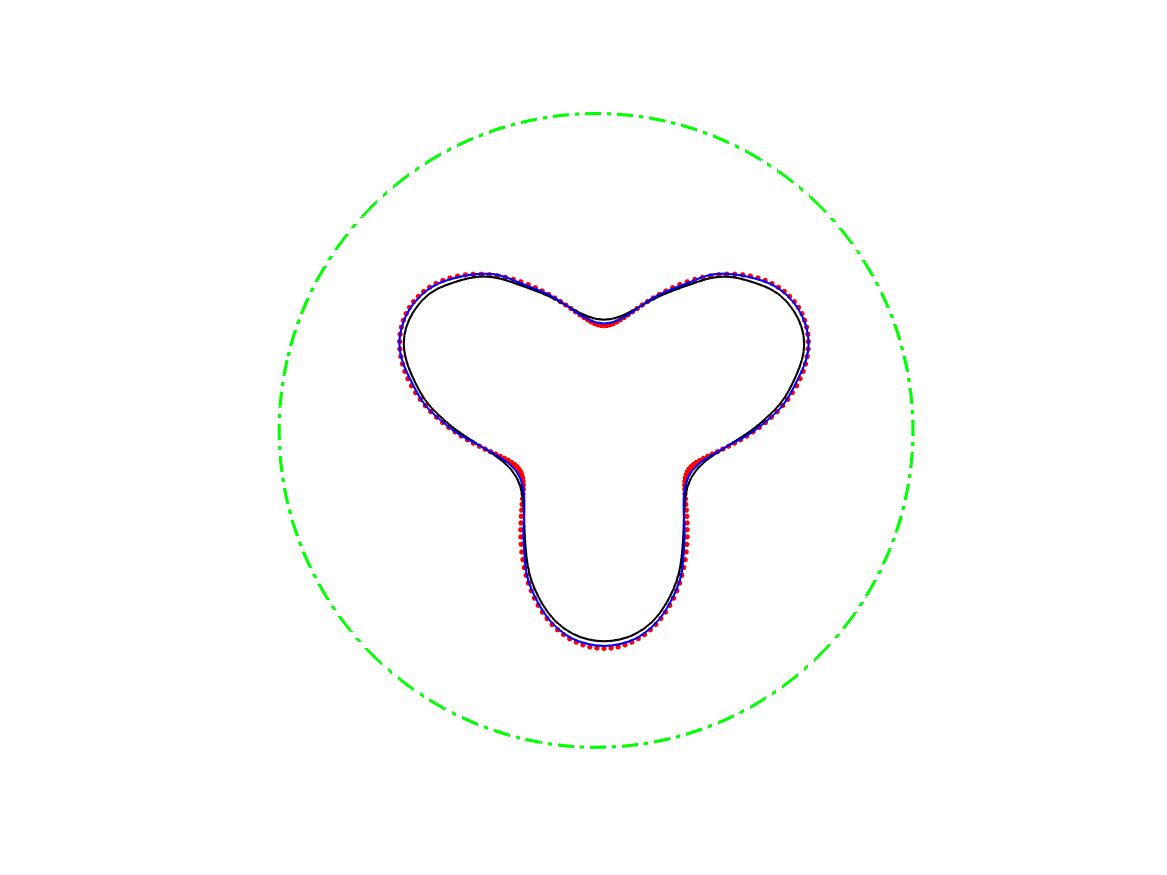}
\includegraphics[width = 0.3\textwidth,clip, trim = 4cm 2cm 4cm 1.5cm]{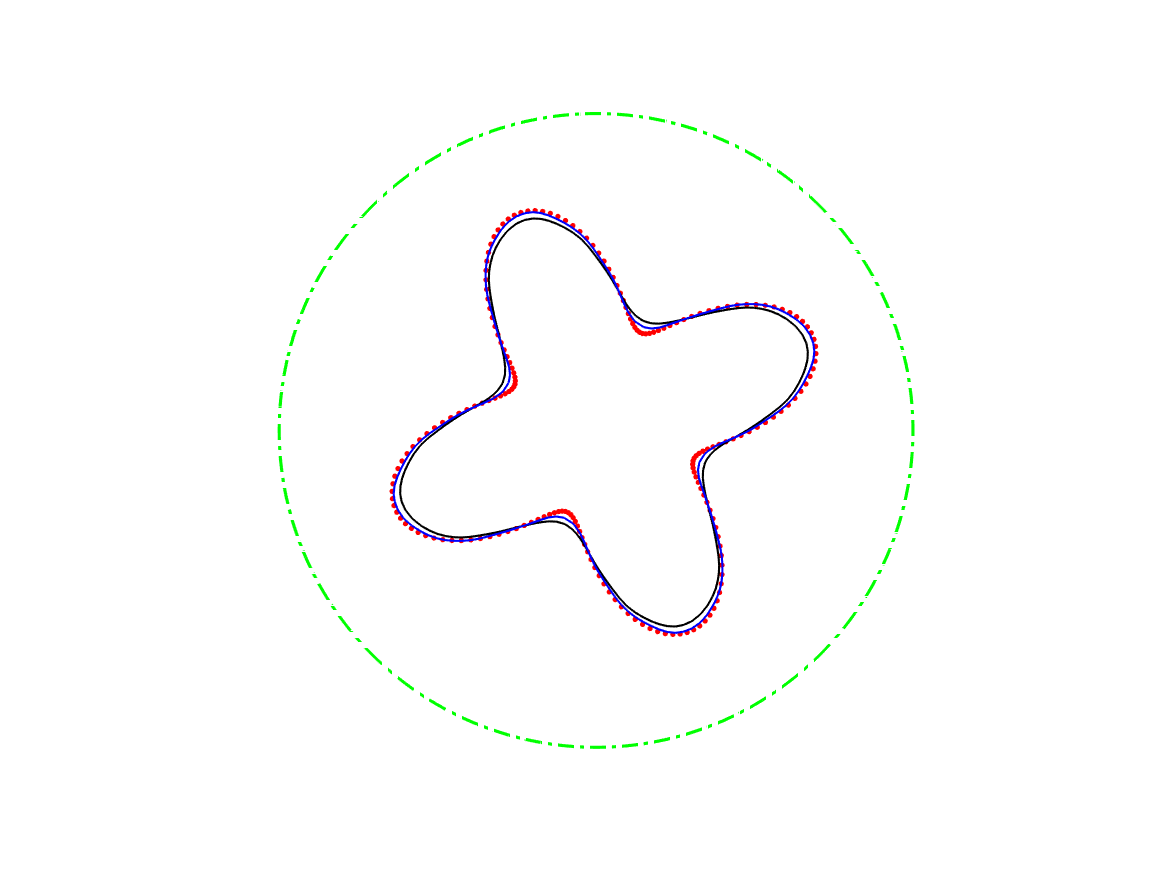}
\includegraphics[width = 0.3\textwidth,clip, trim = 4cm 2cm 4cm 1.5cm]{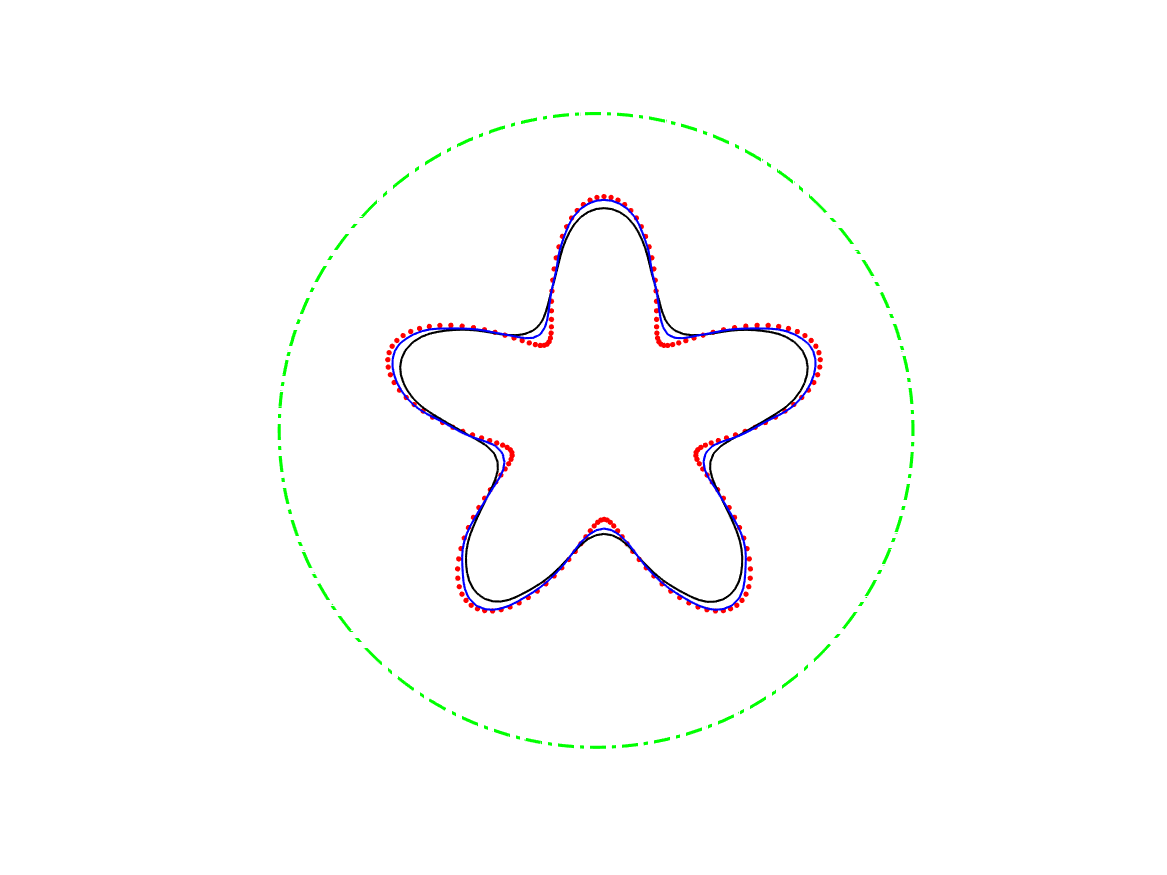}
\includegraphics[width = 0.3\textwidth,clip, trim = 4cm 2cm 4cm 1.5cm]{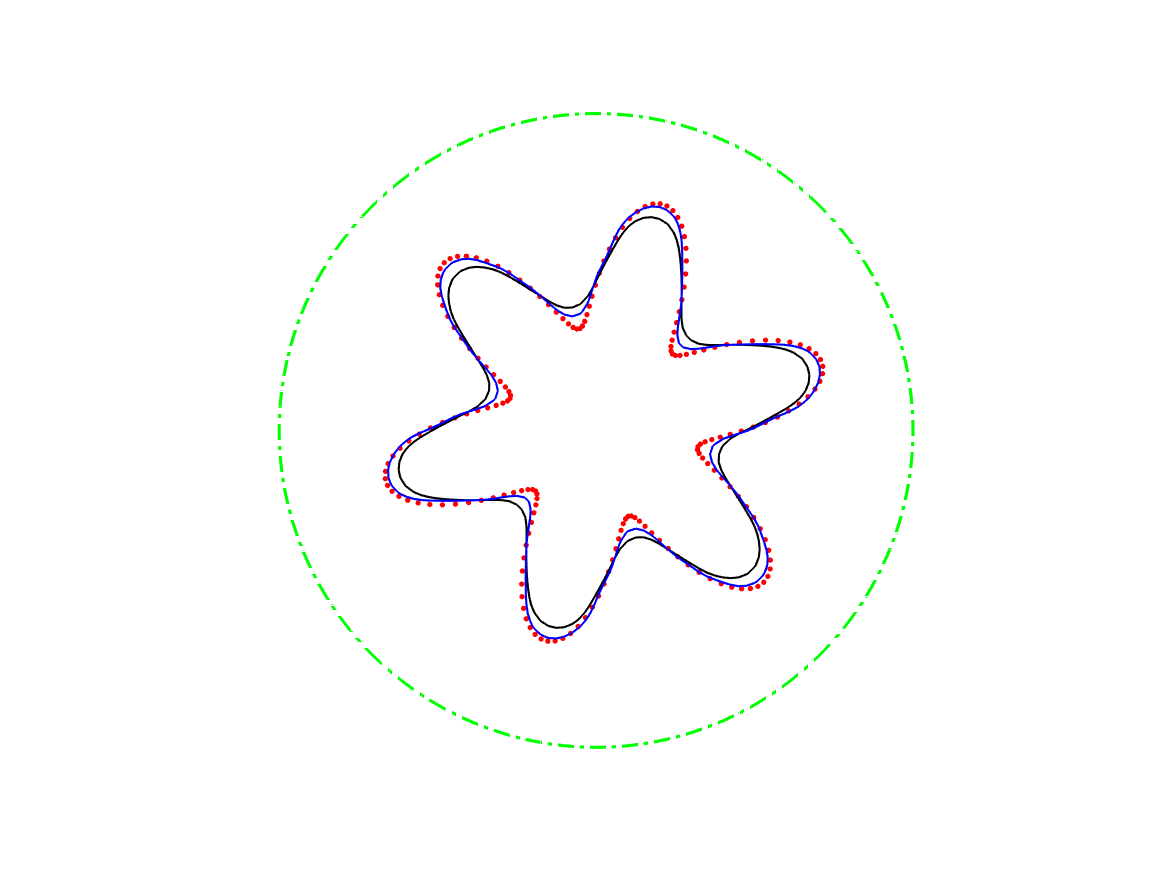}
\includegraphics[width = 0.3\textwidth,clip, trim = 4cm 2cm 4cm 1.5cm]{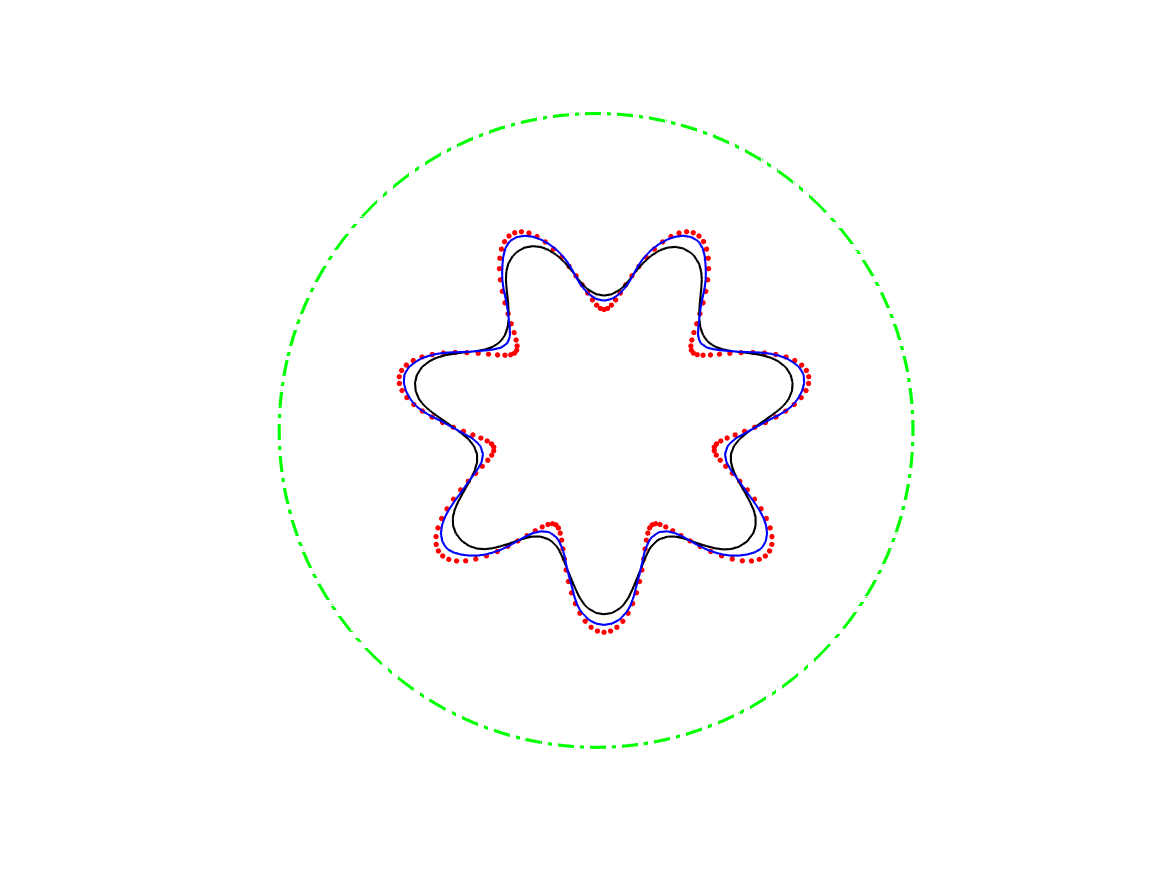}
\includegraphics[width = 0.3\textwidth,clip, trim = 4cm 2cm 4cm 1.5cm]{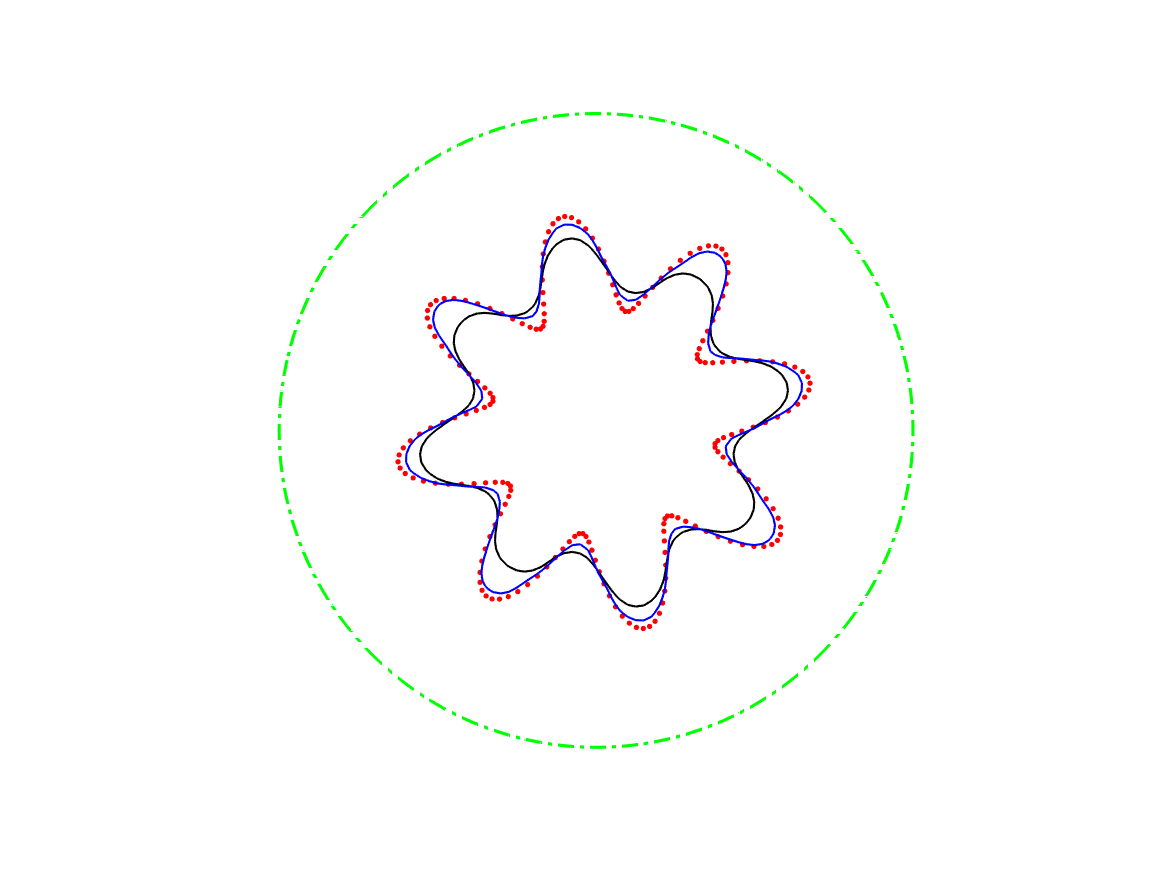}

\medskip

\begin{tabular}{c|c|c|c|c|c|c}
SIM in \cite{he2019fast} & $3.2$ s &  $4.47$ s & $5.26$ s & $6.40$ s & $7.18$ s & $4.56$ s  \\
\hline
\hline
Algorithm~\ref{a:MBO3}\_\ref{a:MBO2}  & $0.029$ s & $0.036$ s & $0.029$ s & $0.026$ s & $0.031$ s & $0.030$ s
\end{tabular}
\caption{Comparisons between Algorithm~\ref{a:MBO3}\_\ref{a:MBO2} and SIM in \cite{he2019fast}. {\bf Blue curve:} results obtained from Algorithm~\ref{a:MBO3}. {\bf Black curve:} results obtained from SIM in \cite{he2019fast}. {\bf Green curve:} initial guess. {\bf Red points}: point cloud with $200$ points. {\bf Table:} CPU times for the computations. See Section~\ref{sec:com}.} \label{fig:com}
\end{figure}

\subsubsection{3-dimensional examples.}\label{sec:3dex}
In the last example, we show the performance of the proposed algorithms in reconstructing 2-dimensional surface from the 3-dimensional point cloud. In the follows, we choose the $\sigma$-level set from $d(\bx)$ as the initial guess of the Algorithm with a relatively large $\sigma$. We note that $\sigma$ can not be very small because the $\sigma$-level set are then small balls around each point, similar to those in Figure~\ref{fig:distance_diag}.

Figure~\ref{fig:3d} displays the reconstructed results from different point clouds: noisy torus, two tori, bumpy torus, bunny, and pig. The computational domain $[-\pi,\pi]^3$ is discretized by $128^3$ grids. We observe that for most parts in these point clouds, they are well reconstructed especially when the curvature is not that big.  However, when the curvature is very big, it is very difficult to touch (\eg, the ears of the bunny and pig) because of the diffusion effect. This is consistent with the dynamical motion law we derived in Section~\ref{sec:connection}. We expect that this could be improved by considering some more terms (for example, a curvature term as that in \cite{he2020curvature}) in the objective energy. The CPU time of each experiment is printed on each figure respectively. All of them are done in seconds on $128^3$ grids, indicating the efficiency of the method. 

\begin{figure}[ht!]
\centering
\includegraphics[width = 0.2\textwidth, clip, trim = 4.5cm 2.5cm 1.5cm 2cm]{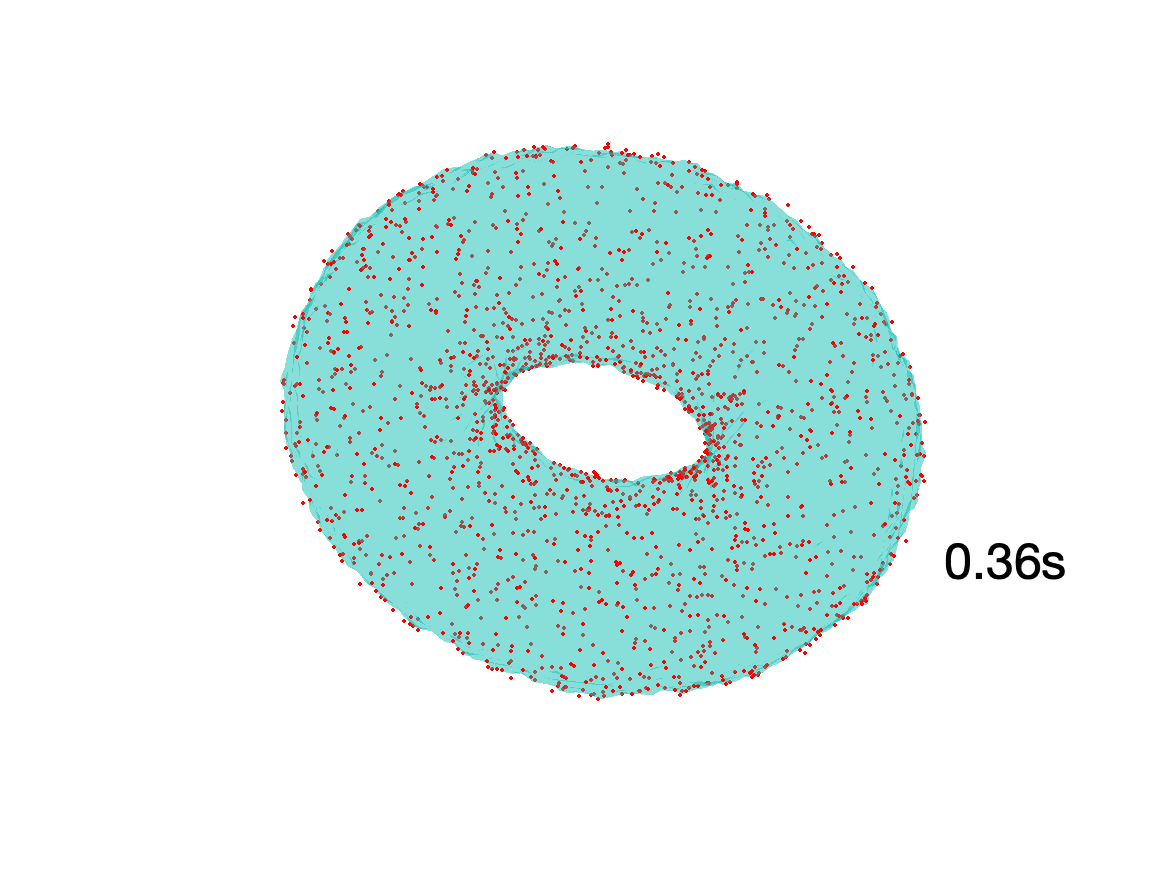} \  \ 
\includegraphics[width = 0.2\textwidth, clip, trim = 3.5cm 1.5cm 3cm 1cm]{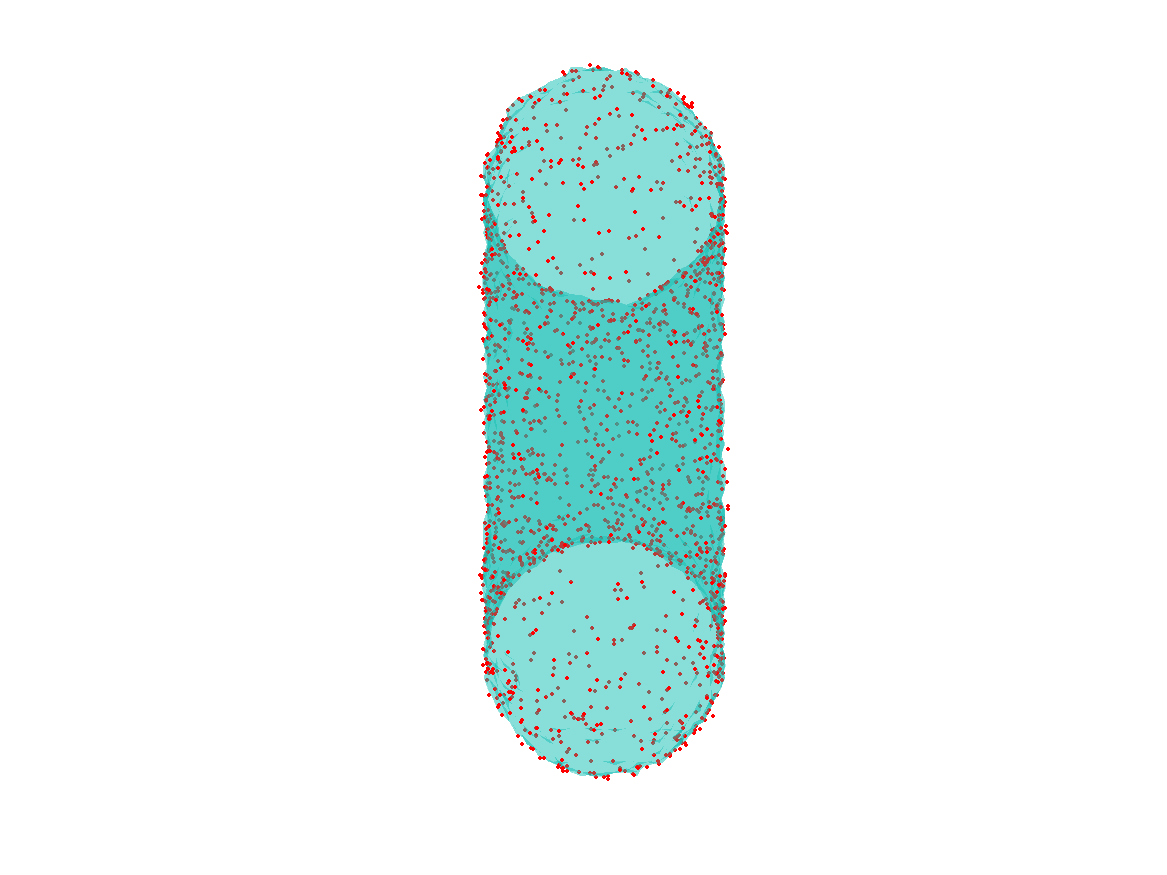} \  \ 
\includegraphics[width = 0.2\textwidth, clip, trim = 4cm 1.5cm 1.5cm 1cm]{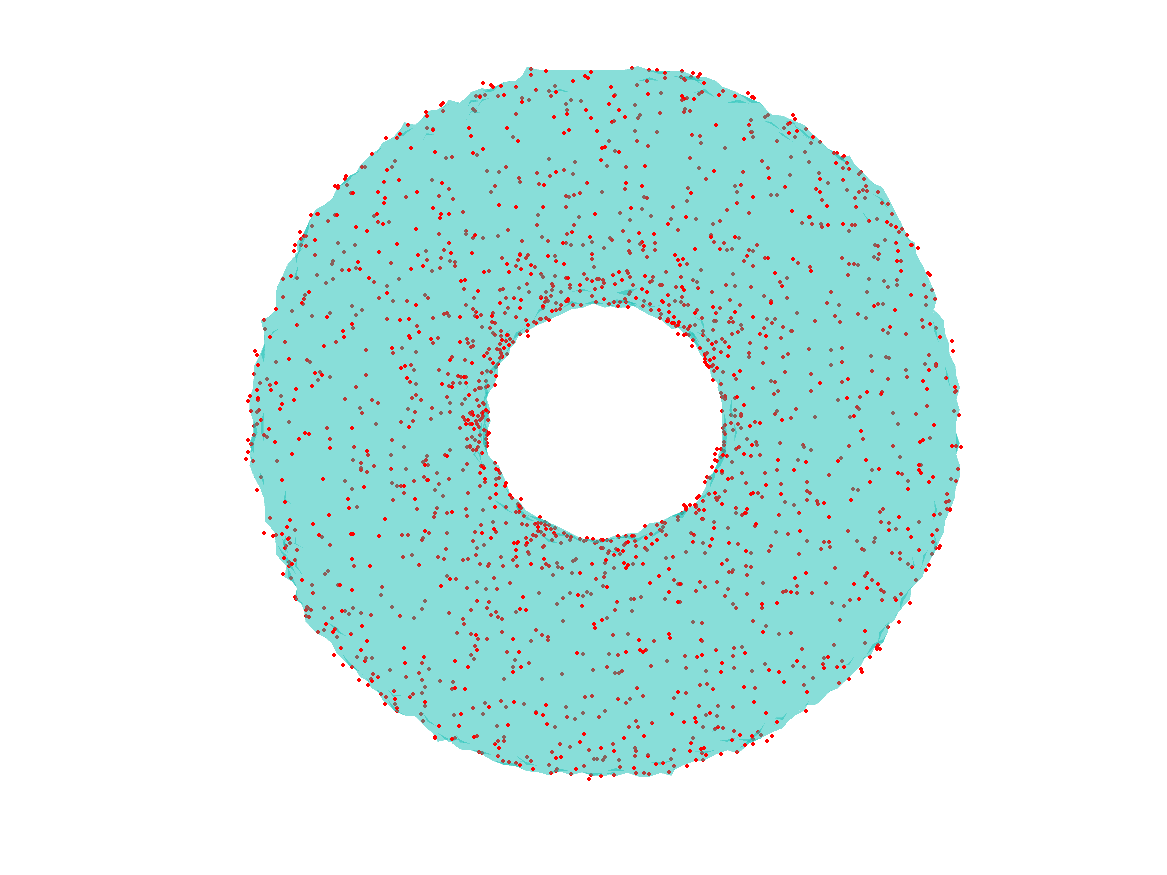} \  \
\includegraphics[width = 0.2\textwidth, clip, trim = 3.5cm 1.5cm 3cm 1cm]{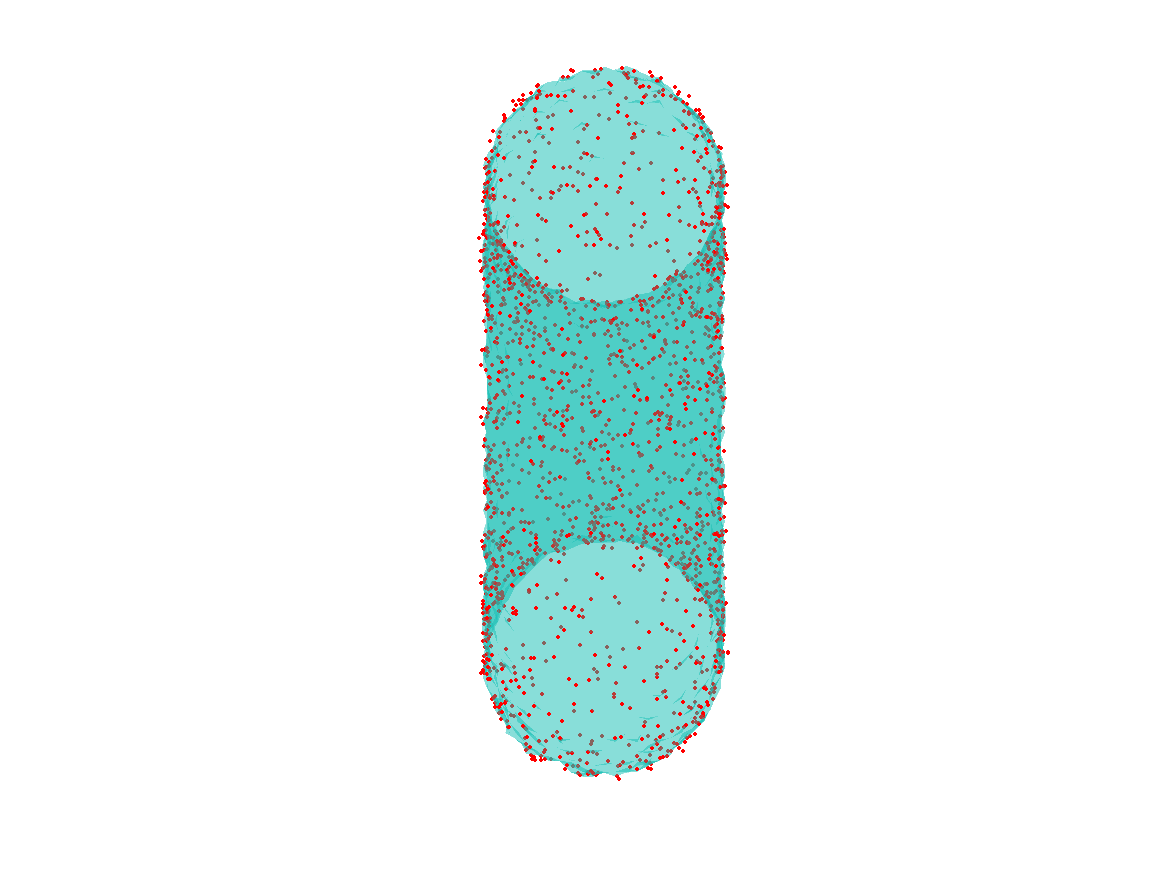} \\
\includegraphics[width = 0.2\textwidth, clip, trim = 3.5cm 2.5cm 1.5cm 2cm]{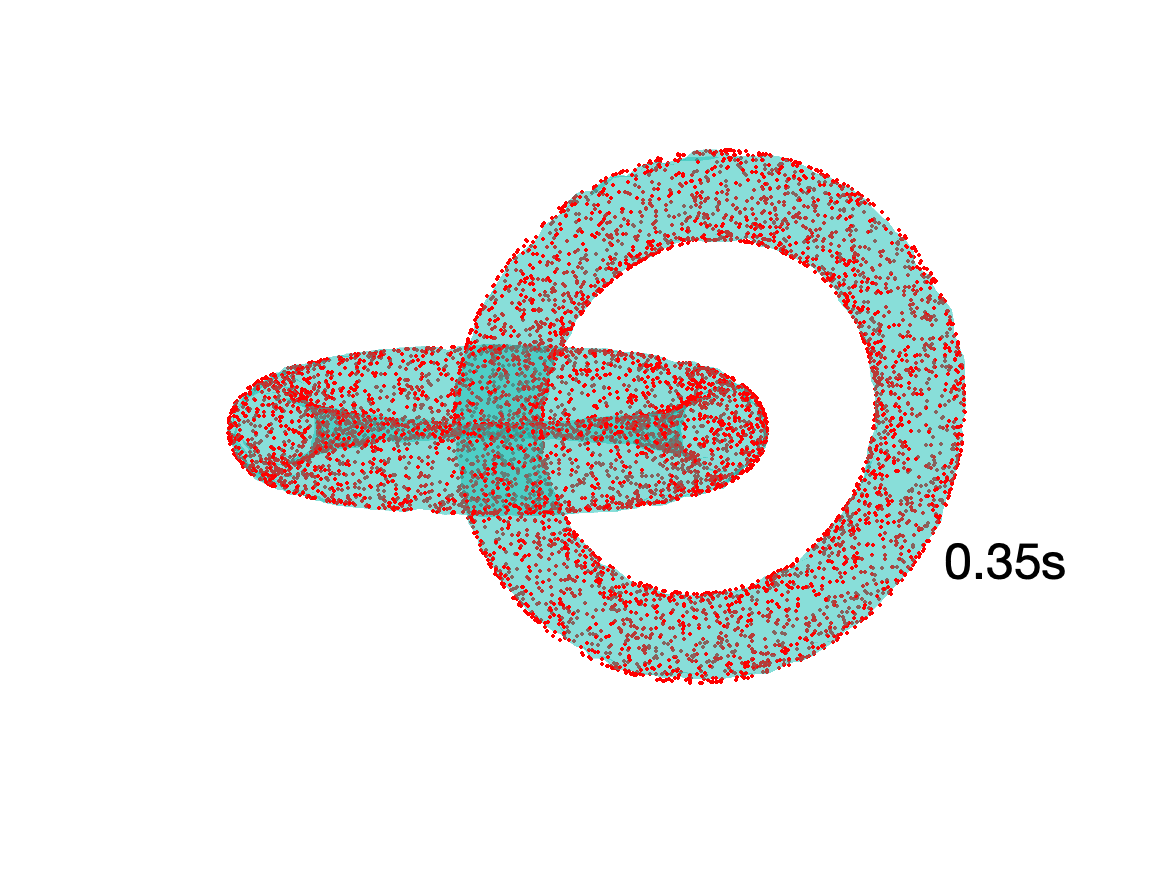} \  \
\includegraphics[width = 0.2\textwidth, clip, trim = 2.5cm 1.5cm 1.5cm 1cm]{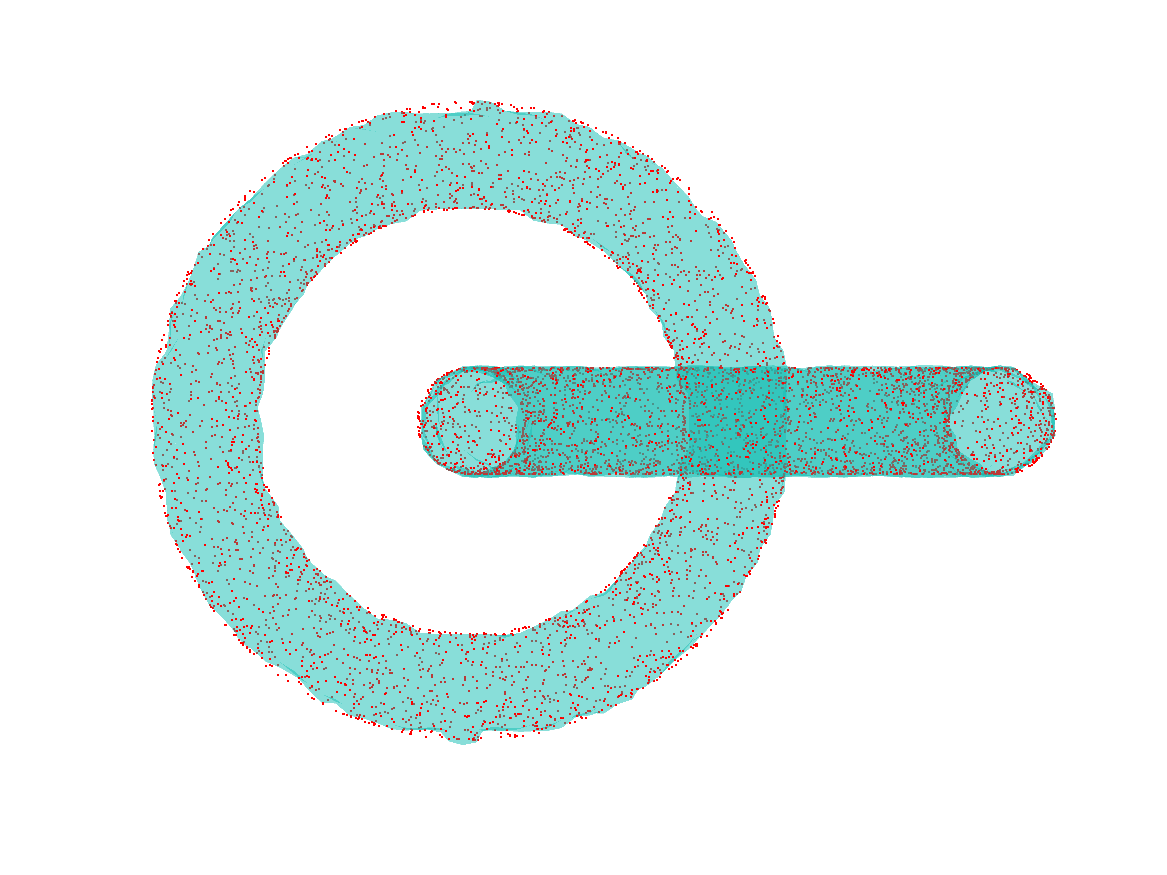}  \ \ 
\includegraphics[width = 0.2\textwidth, clip, trim = 4cm 1.5cm 1.5cm 1cm]{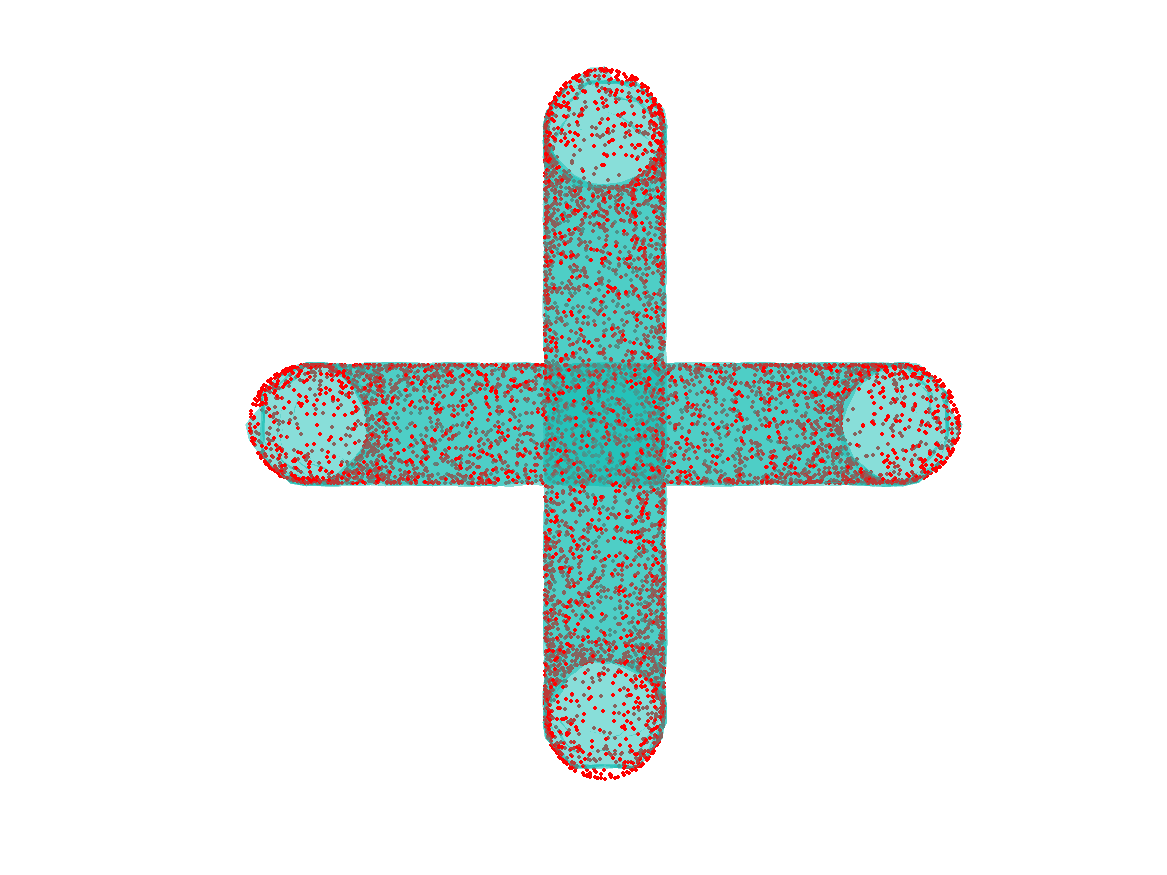} \  \
\includegraphics[width = 0.2\textwidth, clip, trim = 2.5cm 1.5cm 1.5cm 1cm]{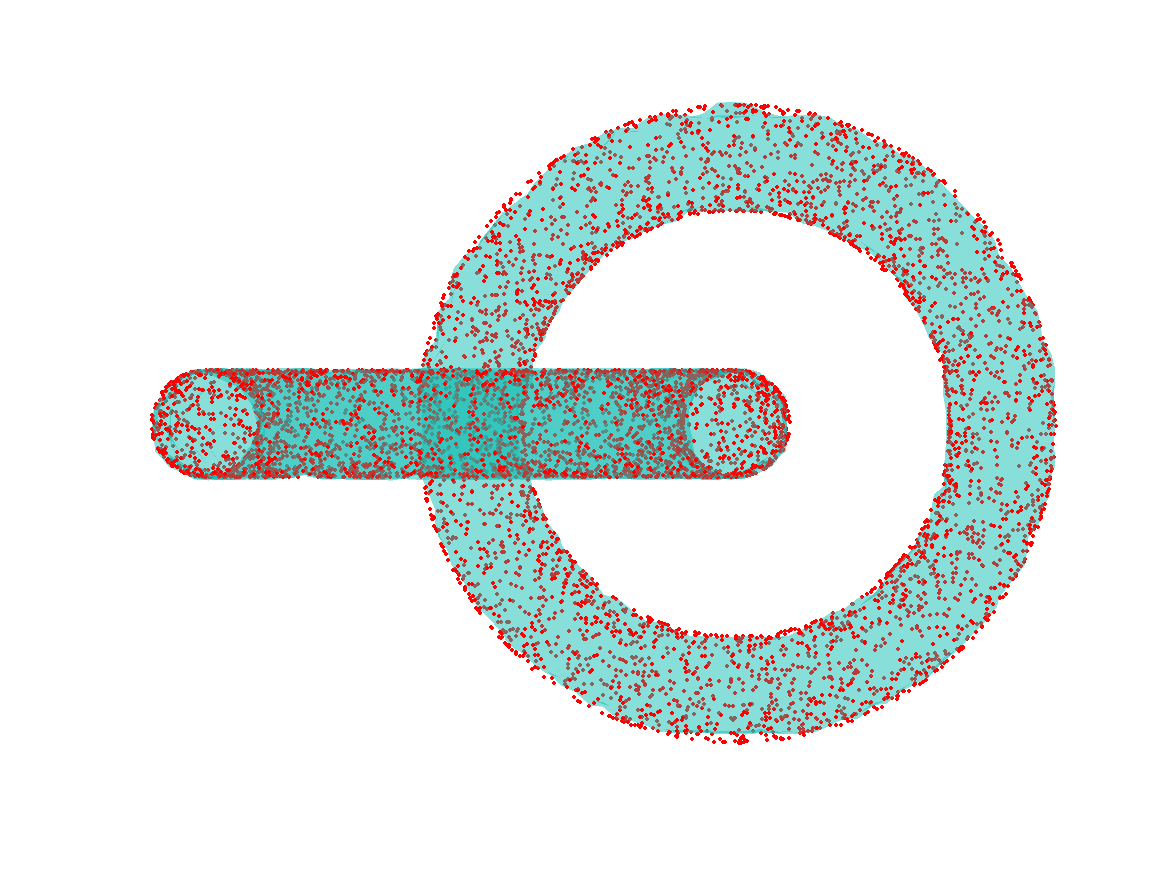} \\
\includegraphics[width = 0.2\textwidth, clip, trim = 4.5cm 2.5cm 2.5cm 2cm]{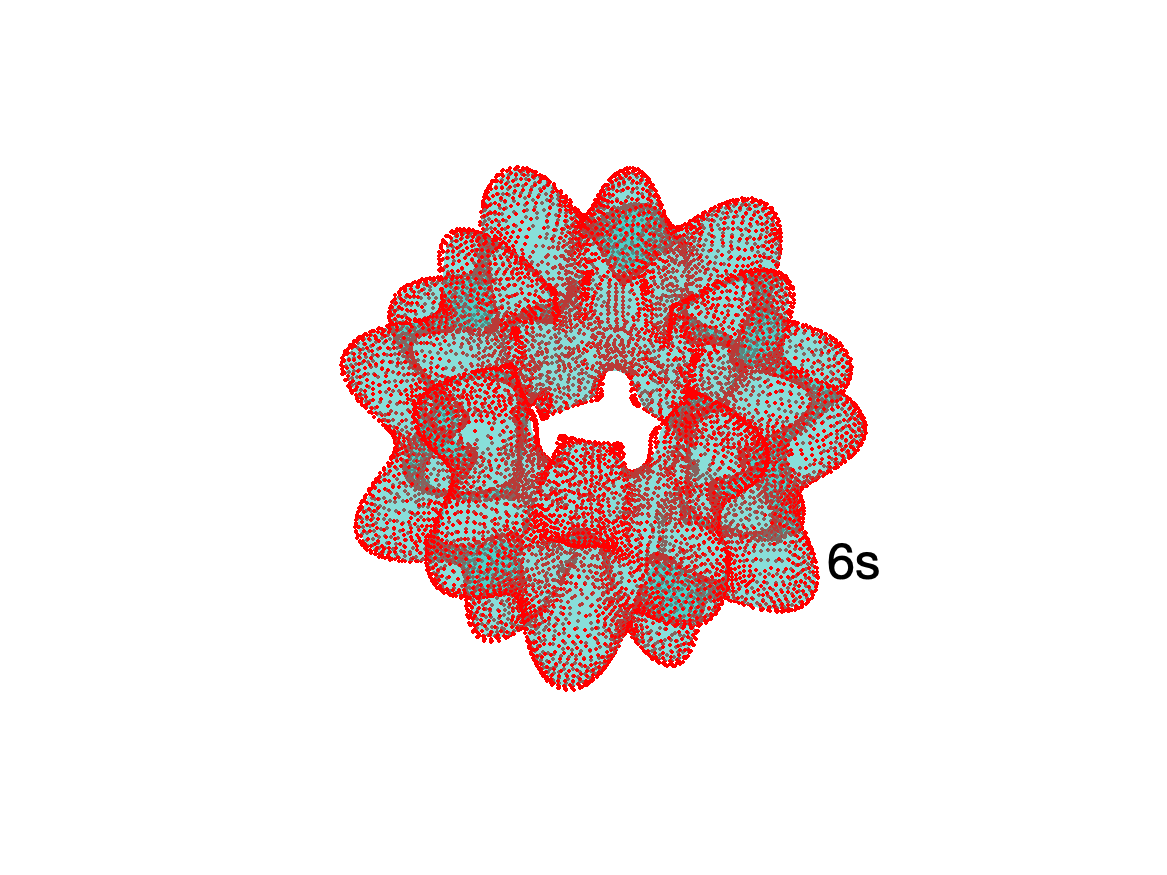} \  \
\includegraphics[width = 0.2\textwidth, clip, trim = 3cm 1.5cm 3cm 1cm]{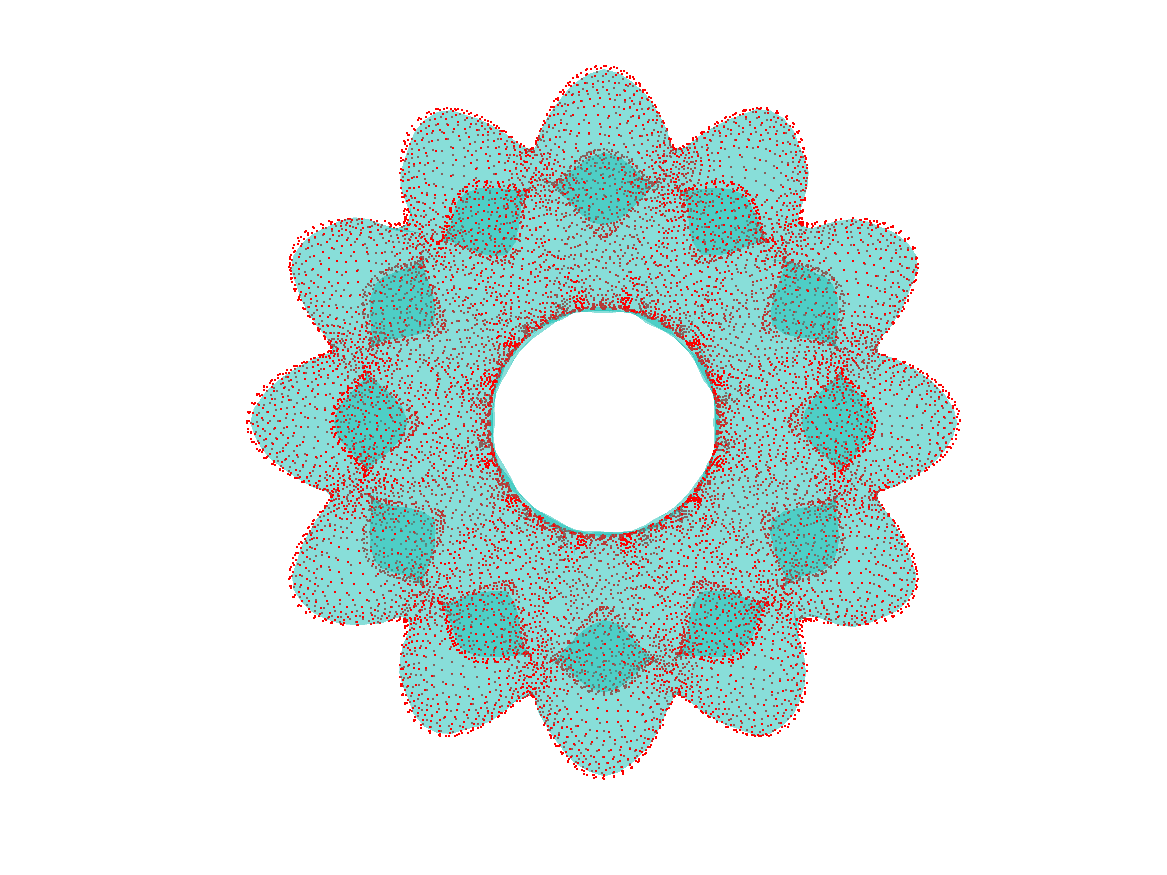}  \ \ 
\includegraphics[width = 0.2\textwidth, clip, trim = 2.5cm 1.5cm 1.5cm 1cm]{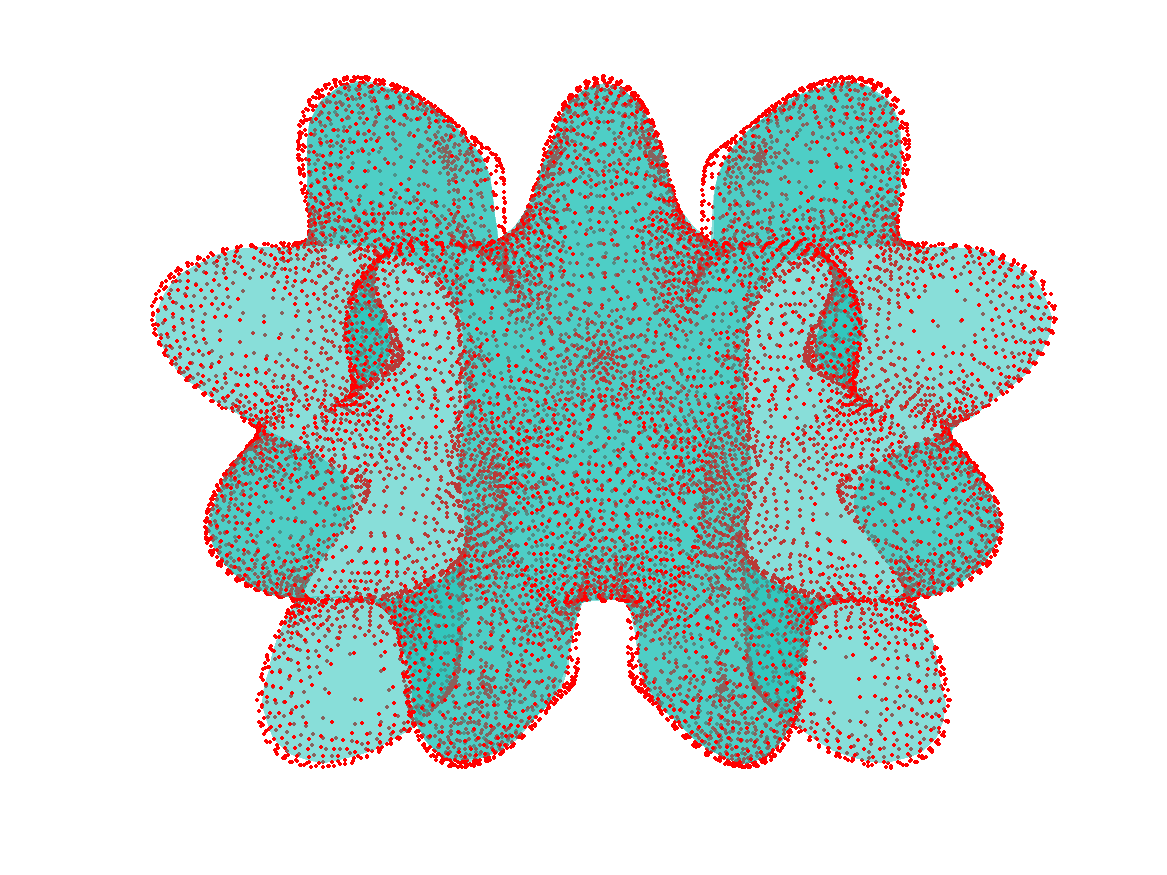} \  \
\includegraphics[width = 0.2\textwidth, clip, trim = 2.5cm 1.5cm 1.5cm 1cm]{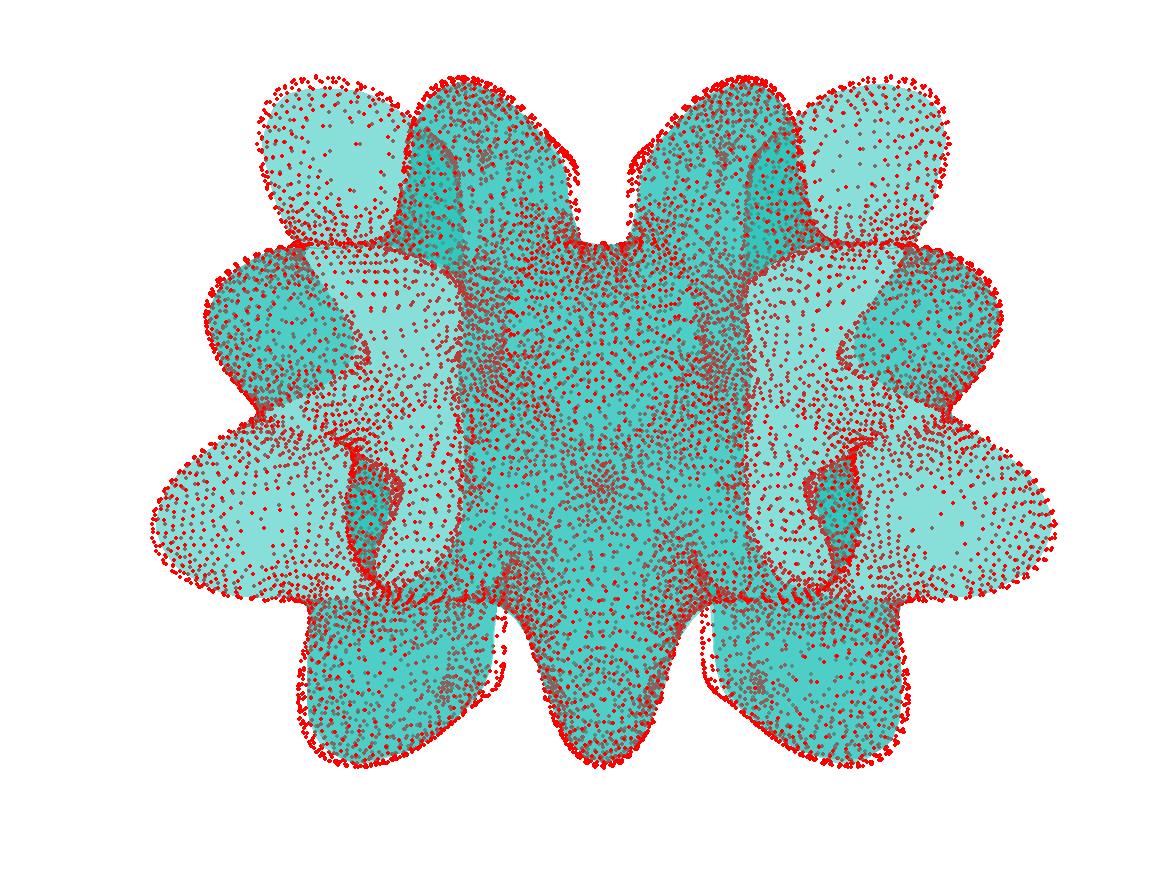} \\
\includegraphics[width = 0.2\textwidth, clip, trim = 6cm 2.5cm 4cm 3.5cm]{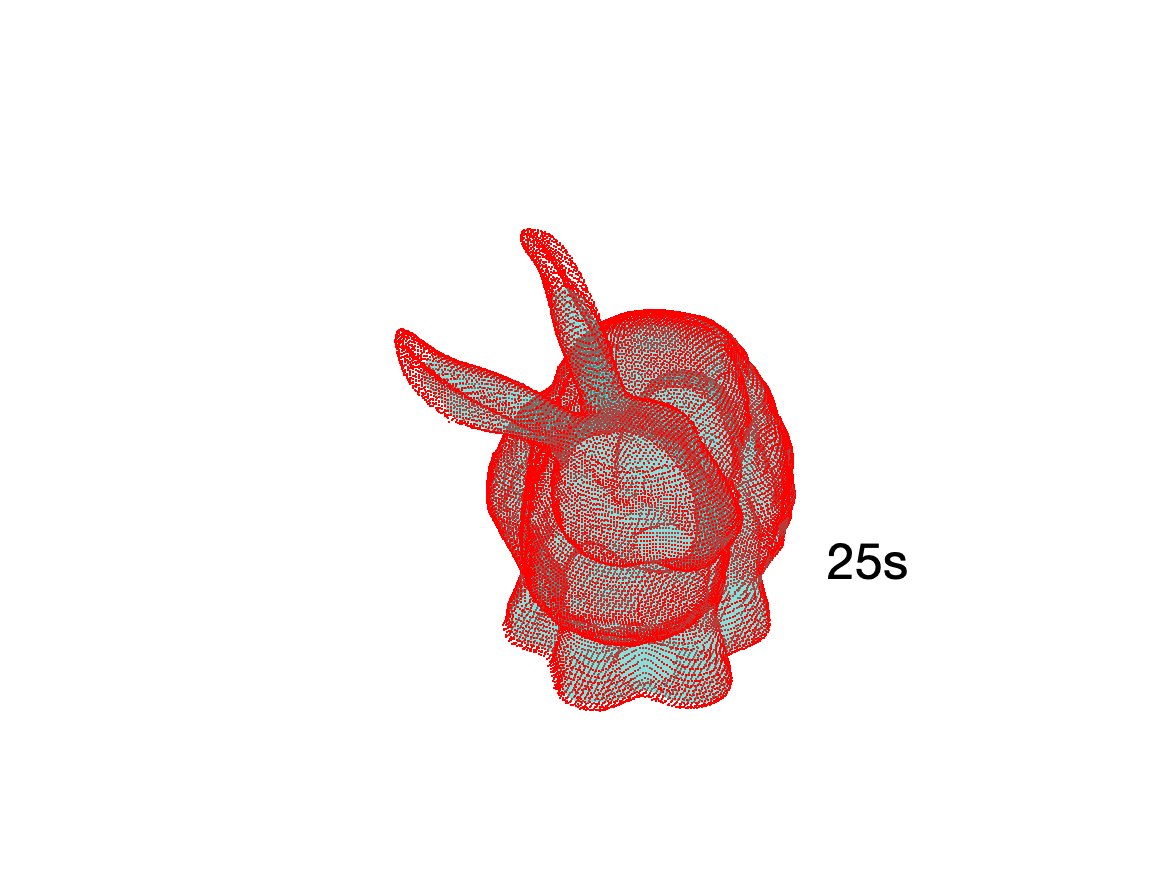} \ \
\includegraphics[width = 0.2\textwidth, clip, trim = 2cm 1.5cm 1.5cm 1cm]{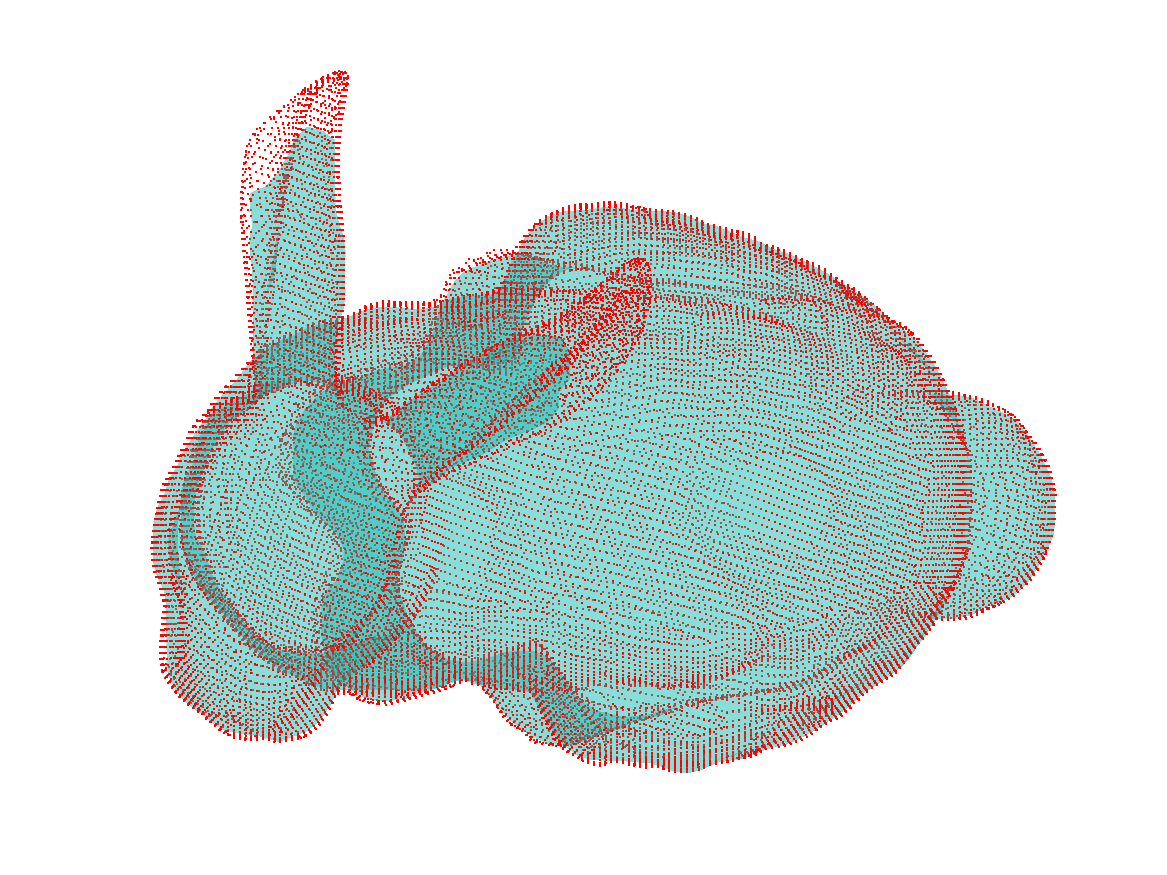}  \ \ 
\includegraphics[width = 0.2\textwidth, clip, trim = 4cm 1.5cm 2.5cm 1cm]{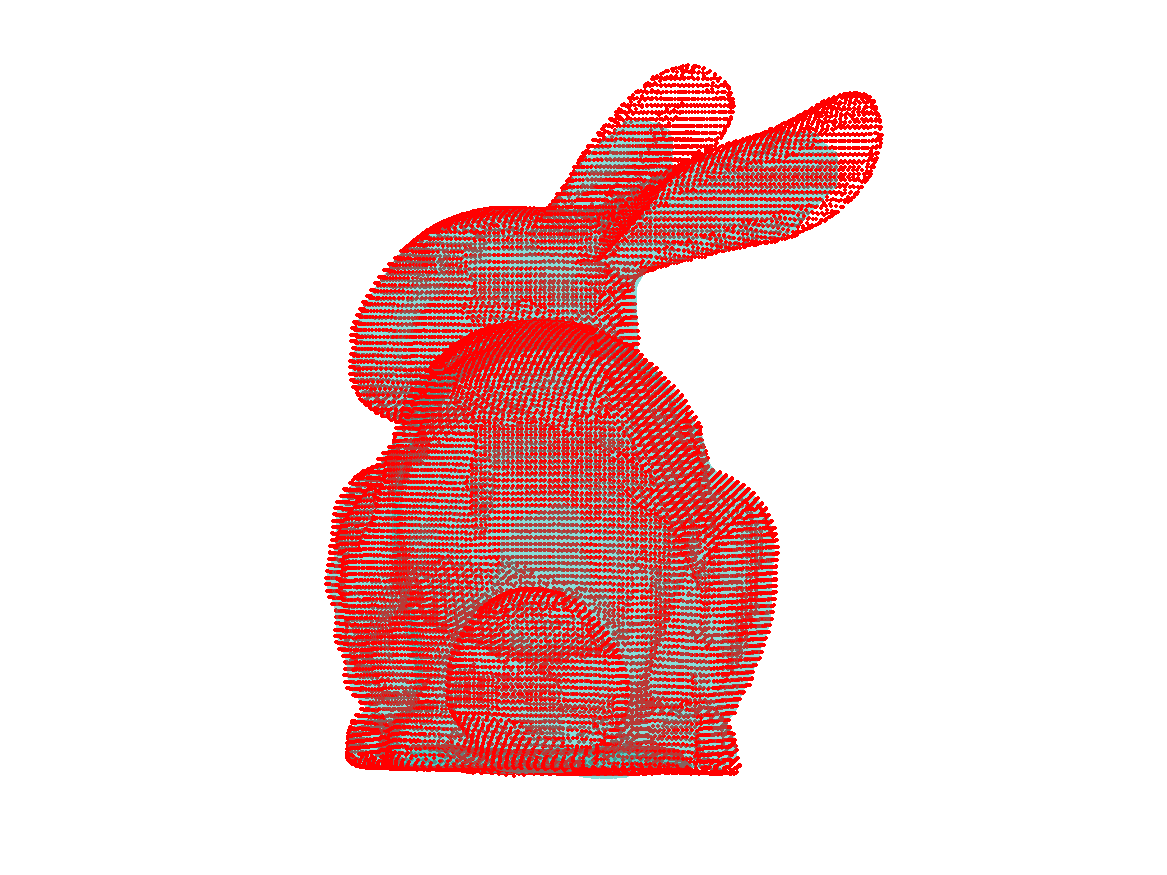} \  \
\includegraphics[width = 0.2\textwidth, clip, trim = 3cm 1.5cm 1.5cm 1cm]{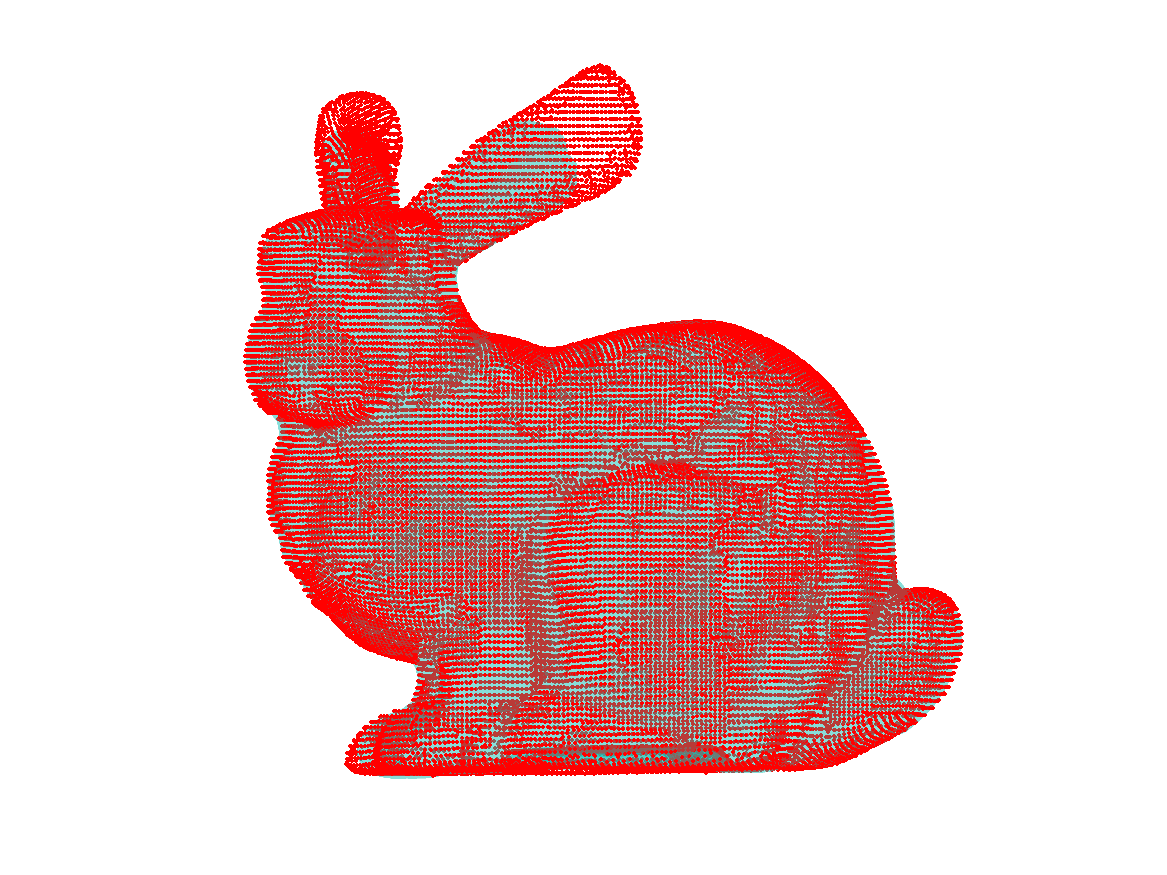} \\
\includegraphics[width = 0.2\textwidth, clip, trim = 3cm 2.5cm 1.5cm 2cm]{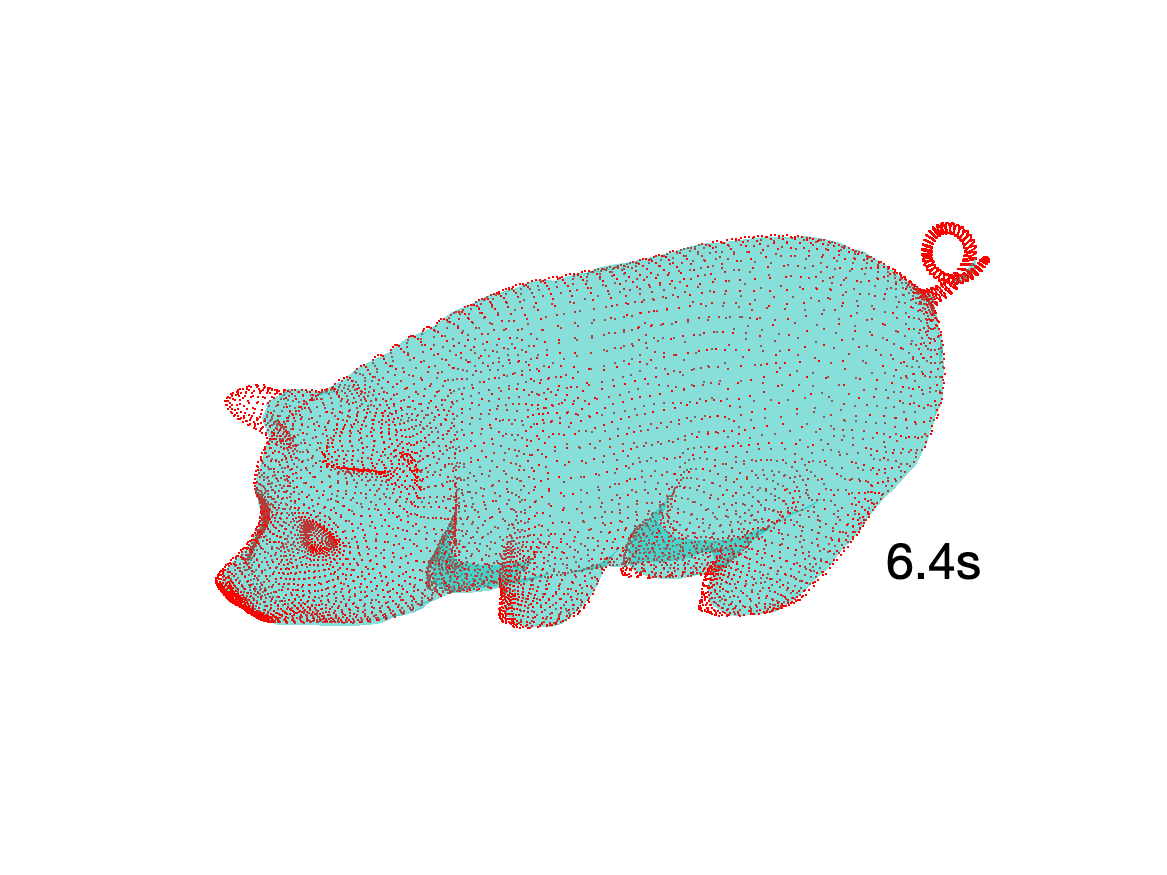} \  \
\includegraphics[width = 0.2\textwidth, clip, trim = 2cm 1.5cm 1.5cm 1cm]{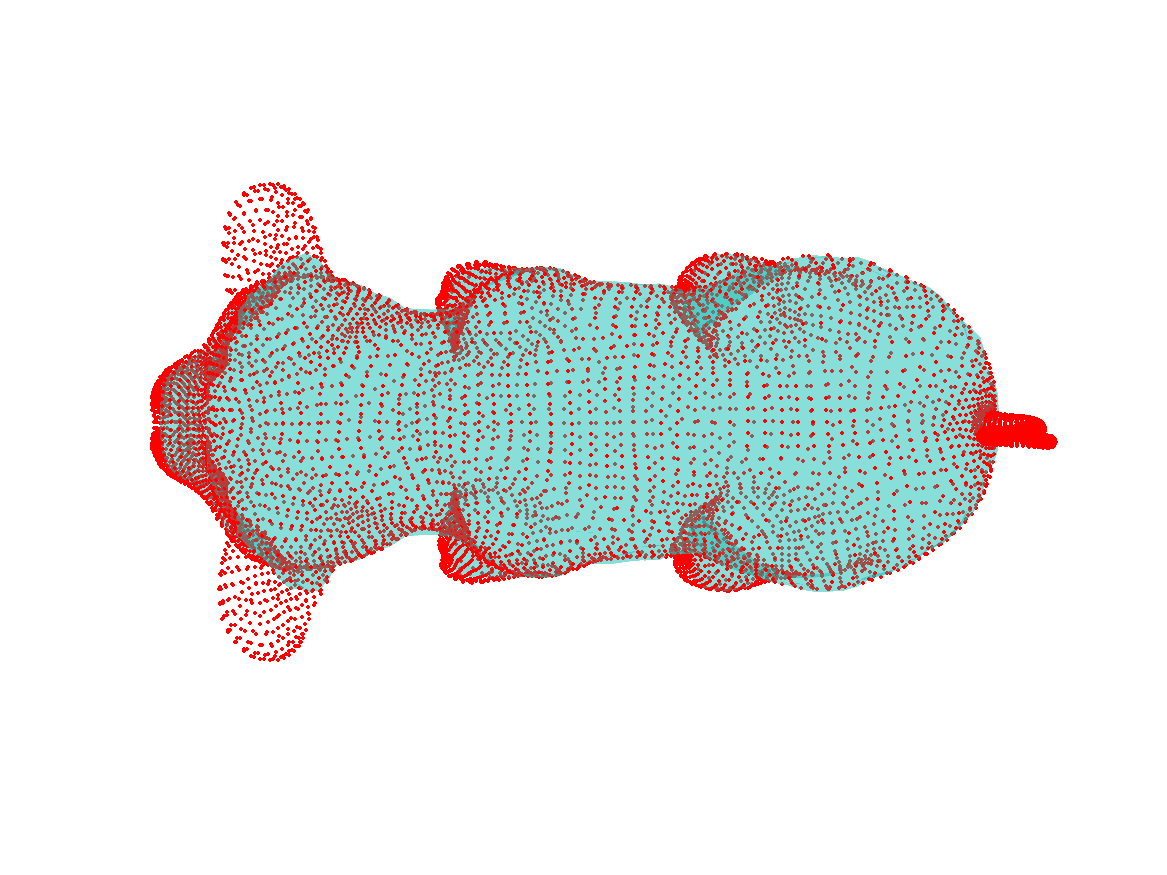} \  \
\includegraphics[width = 0.2\textwidth, clip, trim = 3cm 1.5cm 1.5cm 1cm]{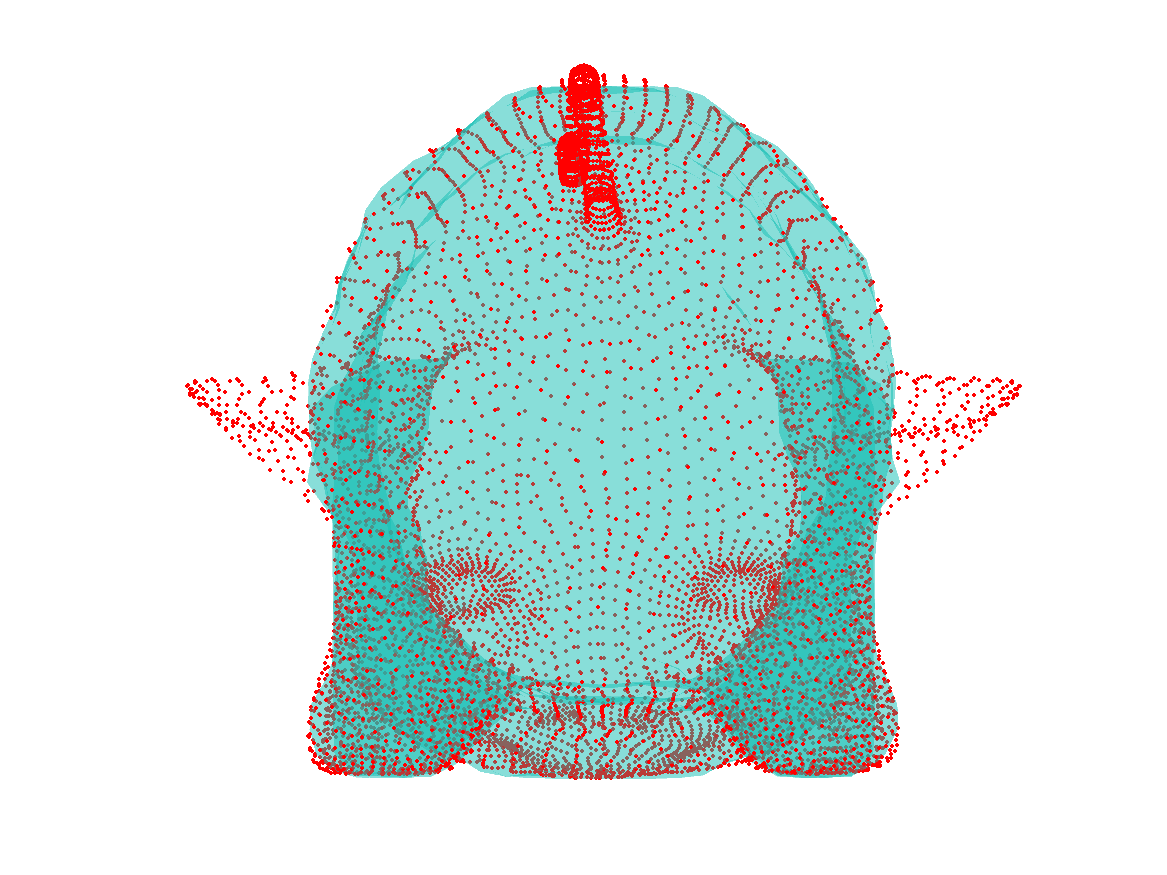} \  \
\includegraphics[width = 0.2\textwidth, clip, trim = 2cm 1.5cm 1.5cm 1cm]{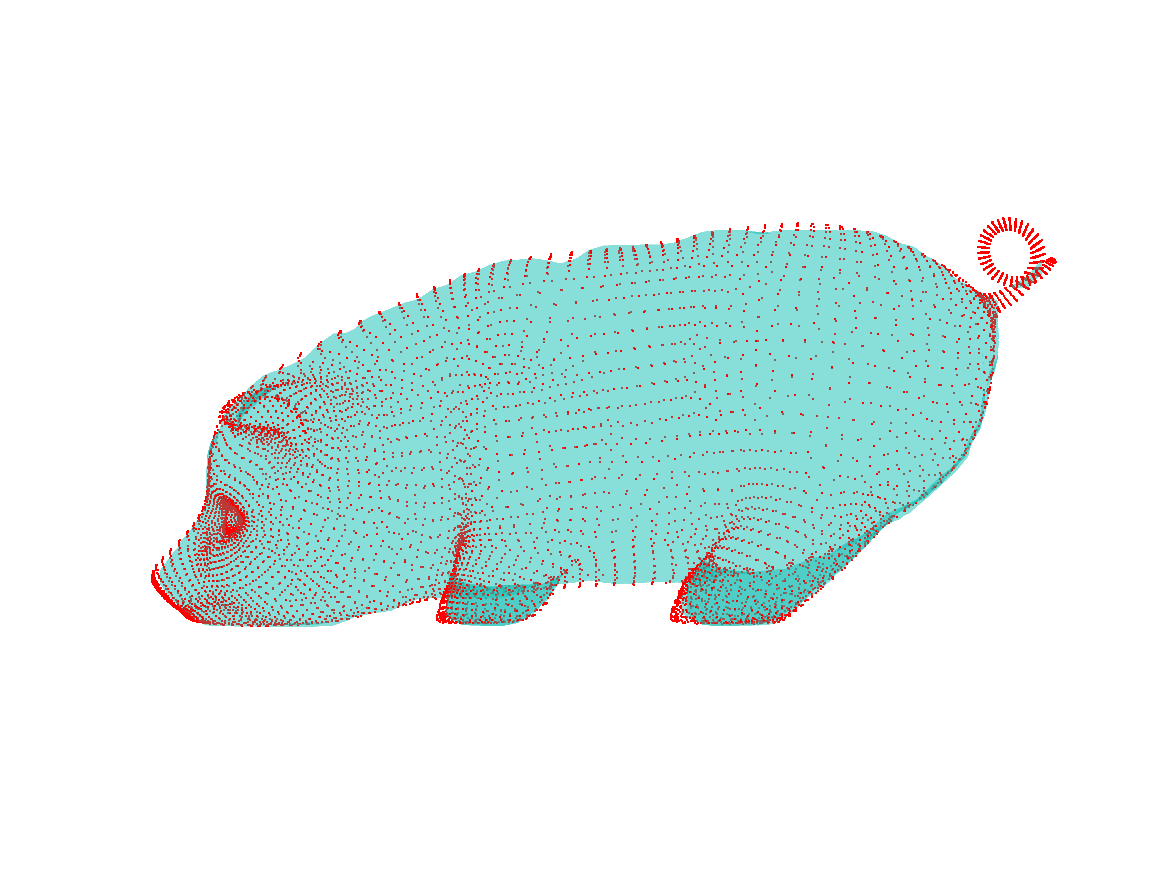} \\
\caption{Results obtained from Algorithm~\ref{a:MBO3}\_\ref{a:MBO2}. {\bf Left to right:} Reconstructed surface from point cloud, $xy$-view, $yz$-view, and $xz$-view. See Section~\ref{sec:3dex}.} \label{fig:3d}
\end{figure}

\section{Conclusion and discussions} \label{sec:con}
In this paper, we developed a novel iterative method to minimize an objective energy functional to reconstruct codimension-1 surfaces from point clouds in both 2- and 3-dimensional Euclidean spaces. The method is simple and unconditional stable in the sense of energy decaying. We carefully checked the properties and efficiency using a variety of numerical experiments. The proposed algorithms show great advantages than the level set approaches. 

From numerical experiments, we observe that thin parts with large curvature are difficult to reconstruct, especially for 3-dimensional point clouds. We expect this could be improved by considering more terms in the objective functional and this will be investigated and reported in the future. As for the Algorithm~\ref{a:MBO}, even all numerical results we performed so far imply the unconditional stability, the theoretical proof is still needed which we believe requires new mathematical tools. Since the surfaces are represented by indicator functions, the accuracy is dependent on the resolution of the discretization mesh. One could use a multi-scale strategy to refine the mesh after obtaining the results on a coarse mesh and set the results as the initial condition of the refined mesh. In addition, for those applications requiring finer design, the proposed methods can be used as an efficient tool for initialization which can be directly cooperated with a high order accurate method.

The CPU time for both 2- and 3-dimensional experiments have clearly shown the fast convergence of the algorithm. One could directly extend all proposed algorithms to higher dimensional problems. 

\subsection*{Acknowledgements}
The author would like to thank Wei Hu, Hao Liu, and Jun Ma for helpful discussions. The author would also like to thank Xiao-Ping Wang and Braxton Osting for constant support, help and encouragement.

\bibliography{refs}

\end{document}